\definecolor{Moonstone}{HTML}{2AB7CA}
\definecolor{SpringGreen}{HTML}{C6DC67}
\definecolor{Sunset}{HTML}{FAD089}
\definecolor{AfricanViolet}{HTML}{A97ABF}
\newtheorem{thm}{Theorem}[section]
\newtheorem{prop}[thm]{Proposition}
\newtheorem{cor}[thm]{Corollary}
\theoremstyle{definition}
\newtheorem{defn}[thm]{Definition}
\newtheorem{ex}[thm]{Example}
\newtheorem{rem}[thm]{Remark}
\let\oldtocsection=\tocsection
\let\oldtocsubsection=\tocsubsection
\let\oldtocsubsubsection=\tocsubsubsection
\renewcommand{\tocsection}[2]{\hspace{0em}\oldtocsection{#1}{#2}}
\renewcommand{\tocsubsection}[2]{\hspace{3em}\oldtocsubsection{#1}{#2}}
\renewcommand{\tocsubsubsection}[2]{\hspace{2em}\oldtocsubsubsection{#1}{#2}}
\newcommand{\shin}{\text{\</s>}}
\newcommand{\rshin}{\mathfrak{R}\text{\</s>}}
\newcommand{\fshin}{\mathchoice 
{\rotatebox[origin=c]{180}{\shin}}
{\rotatebox[origin=c]{180}{\shin}}
{\rotatebox[origin=c]{180}{\scalebox{.7}{\shin}}}
{\rotatebox[origin=c]{180}{\scalebox{.5}{\shin}}}
}
\newcommand{\bshin}{\mathfrak{R}\fshin}
\newcommand{\rskew}{{/\!\!/}}
\newcommand{\otherskew}{skew-II}
\newcommand{\rperp}{ \mathrel{
\begin{tikzpicture}[x=1pt,y=1pt,yscale=-.25,xscale=.25]
\draw[line width=.40pt]    (10,10) -- (20,10) ;
\draw[line width = .40pt]    (20,20) -- (0,20) ;
\draw[line width =.40pt]    (10,0) -- (10,20) ;
\end{tikzpicture}}}
\title[Extended Schur Functions and Involutions]{Extended Schur Functions and Bases Related by Involutions}
\author[S. Daugherty]{Spencer Daugherty} \thanks{spencer.daugherty@colorado.edu. Department of Mathematics, University of Colorado Boulder, USA}
\begin{document}

\maketitle

\begin{abstract} 
We introduce two new bases of $QSym$, the reverse extended Schur functions and the row-strict reverse extended Schur functions, as well as their duals in \textit{NSym}, the reverse shin functions and row-strict reverse shin functions. These bases are the images of the extended Schur basis and shin basis under the involutions $\rho$ and $\omega$, which generalize the classical involution $\omega$ on the symmetric functions.  In addition, we prove a Jacobi-Trudi rule for certain shin functions using creation operators. We define skew extended Schur functions and skew-II extended Schur functions based on left and right actions of \textit{NSym} and $QSym$ respectively. We then use the involutions $\rho$ and $\omega$ to translate these and other known results to our reverse and row-strict reverse bases.\\

\noindent K\textsc{eywords.} Quasisymmetric, Schur-like, NSym, Hopf algebra, tableaux.
\end{abstract}

\section{Introduction}

There has been considerable interest over the last decade in studying Schur-like bases of the noncommutative symmetric functions (\textit{NSym}) and the quasisymmetric functions ($QSym$). A basis $\{\mathcal{S}_{\alpha}\}_{\alpha}$ of \textit{NSym} is generally considered to be \emph{Schur-like} if the commutative image of $\mathcal{S}_{\lambda}$ is $s_{\lambda}$ for any partition $\lambda$. A basis of $QSym$ is Schur-like if it is dual to a Schur-like basis of \textit{NSym}.  These bases are usually defined combinatorially in terms of tableaux that resemble or generalize the semistandard Young tableaux. There are three main pairs of Schur-like bases in \textit{NSym} and $QSym$. In \textit{NSym}, these are the immaculate basis \cite{berg18}, the shin basis \cite{Camp}, and the Young noncommutative Schur basis \cite{Luoto2013}. Their duals in $QSym$ are the dual immaculate basis \cite{berg18},  the extended Schur basis \cite{Assa}, and the Young quasisymmetric Schur basis \cite{Luoto2013}, respectively.

The shin and extended Schur functions, which are dual bases, are unique among the Schur-like bases for having arguably the most natural relationship with the Schur functions. In \textit{NSym}, the commutative image of a shin function indexed by a partition is the Schur function indexed by that partition, while the commutative image of any other shin function is 0. In $QSym$, the extended Schur function indexed by a partition is equal to the Schur function indexed by that partition \cite{Camp}. The goal of this paper is to further the study of this pair of dual bases and to introduce two new pairs of related bases.

In Section \ref{section_creation}, we define a creation operator that can be used to construct a significant number of shin functions. Using this creation operator, we state a Jacobi-Trudi formula for shin functions indexed by strictly increasing compositions. This rule expresses these shin functions as a matrix determinant in terms of the complete homogeneous noncommutative symmetric functions. In Section \ref{section_skew}, we define the skew extended Schur functions algebraically and in terms of skew shin-tableaux, and note their implications for the multiplicative structure of the shin basis. We also define \otherskew\ extended Schur functions using a left action of \textit{NSym} on $QSym$. The skew-II functions are defined combinatorially by instead skewing out the bottom lefthand corner of tableaux. Skew-II functions relate to the product and coproduct similar to the skew functions and become relevant in the following section.

In Section \ref{section_invol}, we introduce two new bases found by applying the $\rho$ and $\omega$ involutions to the extended Schur functions, as well as their dual bases in \textit{NSym}.  We call these new bases the reverse extended Schur functions and the row-strict reverse extended Schur functions, while their duals in \textit{NSym} are the reverse and row-strict reverse shin functions. They are defined combinatorially with tableaux that resemble the shin-tableaux but have different conditions on whether rows increase or decrease and whether that change is weak or strict.  We also connect these bases to the row-strict extended Schur and shin functions which result from the application of the involution $\psi$  to the original bases \cite{row0hecke}. The involutions $\psi$, $\rho$, and $\omega$ on $QSym$ and \textit{NSym} collectively generalize the classical involution $\omega$ on the symmetric functions. In $Sym$, the Schur basis is its own image under $\omega$, but in $QSym$ the extended Schur functions are part of a system of 4 bases that is closed under the three involutions $\psi$, $\rho$, and $\omega$.  The existence of this system of 4 bases as an analogue to the behavior of the Schur functions under $\omega$ is an interesting feature of how these bases generalize the Schur functions. 

Additionally, while the combinatorics of these bases are similar, they may have very different applications. For example, the quasisymmetric Schur basis and the Young quasisymmetric Schur basis are related by the involution $\rho$ but the former is much more compatible with Macdonald polynomials while the latter is more useful when working with Schur functions \cite{haglund2011, Luoto2013}. The quasisymmetric Schur functions are another Schur-like basis that exists in a system of 4 related bases, although each was defined independently and not presented together in this manner \cite{MasNie, MasRem}. Mason and Searles study various polynomials related by $\omega$ in \cite{Mason2021Dichotomy}, using the quasisymmetric Schur functions and the Young quasisymmetric Schur functions as a starting point.

We use the involutions to state row-strict, reverse, and row-strict reverse analogs to our new and classical results on the extended Schur and shin bases. These include a Pieri rule, a rule for multiplication by a ribbon function, a partial Jacobi-Trudi rule, expansions into other bases, a description of the antipode on the shin and extended Schur bases, and two different types of skew functions.  Specifically, we show that the \otherskew\ reverse and row-strict reverse extended Schur functions are the image of the skew extended Schur functions under $\rho$ and $\omega$ respectively.

\subsection{Acknowledgements} The author thanks Laura Colmenarejo and Sarah Mason for their guidance and support,  as well as Kyle Celano and Sheila Sundaram for their thoughtful feedback. We also thank Mike Zabrocki for helpful answers about creation operators and John Campbell for sharing useful Sage code. An extended abstract of this paper is to appear in \textit{Proceedings of the 36th International Conference on Formal Power Series and Algebraic Combinatorics}.

\tableofcontents

\section{Background}

Let $n$ be a positive integer.  A \emph{partition} $\lambda \vdash n$ is a sequence of positive integers $\lambda = (\lambda_1, \ldots, \lambda_k)$ such that $\sum_{i} \lambda_i = n$ and $\lambda_i \geq \lambda_{i+1}$ for all $1 \leq i < k$.  A \emph{composition} $\alpha \models n$ is a sequence  of positive integers $\alpha = (\alpha_1, \ldots, \alpha_k)$ where $\sum_i \alpha_i =n$, so the set of compositions includes all partitions. The \emph{size} of a composition $\alpha = (\alpha_1, \ldots, \alpha_k) \models n $ is $|\alpha|=n$ and the \emph{length} of $\alpha$ is $\ell(\alpha) = k$. The \emph{diagram} of a composition is a left-aligned array of boxes where the $i^{\text{th}}$ row from the top contains $\alpha_i$ boxes. We use the English notation for our diagrams, meaning that the top row is considered to be the first row.  For two compositions $\alpha = (\alpha_1, \ldots, \alpha_k)$ and $\beta = (\beta_1, \ldots, \beta_j)$ with $\beta_i \leq \alpha_i$ for all $1\leq i \leq j$, the \emph{skew shape} $\alpha/\beta$ is a composition diagram of shape $\alpha$ where the first $\beta_i$ boxes in the $i^{\text{th}}$ row are removed for $1 \leq i \leq j$. We represent this removal by shading in the removed boxes.

A \emph{weak composition} of $n$ is a sequence of non-negative integers $\alpha = (\alpha_1, \ldots, \alpha_k)$ where $\sum_i \alpha_i = n$; in other words a composition that allows zeroes. Often we implicitly treat weak compositions as if they have infinitely many zero entries at their end. Given a weak composition $\alpha$, we define the \emph{flattening} of $\alpha$, denoted $\tilde{\alpha}$, to be the composition obtained by removing all zeroes from $\alpha$. 

 We consider three involutions on compositions. The \emph{complement} of a composition $\alpha$ is defined $\alpha^c = comp(set(\alpha)^c)$. Intuitively, if the composition $\alpha$ is represented as blocks of stars separated by bars, then $\alpha^c$ is the composition given by 
 placing bars in exactly the places where $\alpha$ does not have bars. The \emph{reverse} of $(\alpha_1, \ldots, \alpha_k)$, denoted $\alpha^r$, is defined as $(\alpha_k, \ldots, \alpha_1)$. The \emph{transpose} of $\alpha$ is defined $\alpha^t = (\alpha^r)^c = (\alpha^c)^r$. Additionally, the \emph{conjugate} of a partition $\lambda$, denoted $\lambda'$, is defined by flipping the diagram of $\lambda$ over the diagonal. The conjugate of a partition is also sometimes called the transpose, but note that $\lambda^t$ is not always equal to $\lambda'$ \cite{Luoto2013}. 

 \begin{ex}
     Consider the partition $\beta = (3,2)$. Then, $$\beta^c = (1,1,2,1), \quad \beta^r = (2,3), \quad \beta^t = (1,2,1,1), \text{\quad and \quad} \beta' = (2,2,1).$$ Note that the first three maps are involutions on all compositions, whereas the final map is an involution only on partitions.
 \end{ex}

Let $\alpha = (\alpha_1, \ldots, \alpha_k)$ and $\beta = (\beta_1, \ldots, \beta_j)$ be two compositions. The \emph{concatenation} of $\alpha$ and $\beta$ is given by $\alpha \cdot \beta = (\alpha_1, \ldots, \alpha_k, \beta_1, \cdots, \beta_j)$ and the \emph{near-concatenation} is given by $\alpha \odot \beta = (\alpha_1, \ldots, \alpha_{k-1}, \alpha_k + \beta_1, \beta_2, \ldots, \beta_j)$. Define $sort(\alpha)$ as the unique partition that can be obtained by rearranging the parts of $\alpha$ into weakly decreasing order.  Under the \textit{refinement order} $\preceq$ on compositions of size $n$, we say $\alpha \preceq \beta$ if and only if $\{\beta_1, \beta_1+\beta_2, \ldots, \beta_1 + \cdots + \beta_j = n\} \subseteq \{\alpha_1, \alpha_1 + \alpha_2, \ldots, \alpha_1 + \cdots + \alpha_k = n\}$. Under \emph{dominance order} $\subseteq$ on compositions, we say $\alpha \subseteq \beta$ if and only if $k \leq j$ and $\alpha_i \leq \beta_i$ for $1 \leq i \leq k$. 

 \begin{ex} Let $\alpha = (1,2,3,1)$ and $\beta = (3,2)$. Then, $$\alpha \cdot \beta = (1,2,3,1,3,2) \qquad \qquad \alpha \odot \beta = (1,2,3,4,2) \qquad \qquad sort(\alpha) = (3,2,1,1),$$
 $$(1,1,1,1) \preceq (1,2,1) \preceq (1,3) \preceq (4) \qquad \qquad (1,1,1) \subseteq (2,1,1,1) \subseteq (2,3,1,2).$$
 \end{ex}

There is a natural bijection between ordered subsets of $[n-1] = \{1,2, \ldots, n-1 \}$ and compositions of $n$. For an ordered set $S = \{s_1,\ldots,s_k\} \subseteq \{[n-1]\}$, let $comp(S) = (s_1, s_2-s_1, \ldots, s_k-s_{k-1}, n-s_k)$ and for a composition $\alpha = (\alpha_1,\ldots,\alpha_j)$, let $set(\alpha) = \{\alpha_1, \alpha_1+\alpha_2, \ldots, \alpha_1 + \alpha_2 + \ldots + \alpha_{j-1}\}$. A \textit{permutation} $\sigma$ of a set is a bijection from the set to itself. The permutation $\sigma$ of $[n]$ is written in one-line notation as $\sigma(1)\sigma(2) \cdots \sigma(n)$ \cite{EC2}. 

\subsection{Symmetric functions and the Schur basis}

The \emph{symmetric functions} are formal power series such that $f(x_1, x_2, \ldots ) = f(x_{\sigma(1)}, x_{\sigma(2)}, \ldots)$ for any permutation $\sigma$ of $\mathbb{N}$. They form a ring and specifically a graded Hopf algebra,  which we take over $\mathbb{Q}$ and denote $Sym$. For a partition $\lambda \vdash n$, the \emph{monomial symmetric function} is defined as $$m_{\lambda} = \sum_{sort(\alpha) = \lambda} x^{\alpha},$$ where the sum runs over all unique compositions $\alpha$ that are permutations of the entries of $\lambda$. For $\lambda = (\lambda_1, \ldots, \lambda_k) \vdash n$, the \emph{elementary symmetric function} $e_{\lambda}$ is defined as $$e_n = m_{1^n}= \sum_{i_1 < \cdots < i_n} x_{i_1} \cdots x_{i_n}, \text{\quad where \quad} e_{\lambda} = e_{\lambda_1}e_{\lambda_2} \cdots e_{\lambda_k}.$$ The \emph{complete homogeneous symmetric function} is defined as
$$h_n = \sum_{\lambda \vdash n} m_{\lambda} = \sum_{i_1 \leq \cdots \leq i_n}x_{i_1} \cdots x_{i_n} \text{\quad where \quad} h_{\lambda} = h_{\lambda_1} \cdots h_{\lambda_k}.$$ 

The sets of functions above form bases of $Sym$. The most important basis for our purposes, however, is the Schur basis. We first define the Schur functions combinatorially. For a partition $\lambda \vdash n$, a \emph{semistandard Young tableau} (SSYT) of shape $\lambda$ is a filling of the diagram of $\lambda$ with positive integers such that the rows are weakly increasing from left to right and the columns are strictly increasing from top to bottom. The \emph{size} of a SSYT is its number of boxes, $n = \sum_i \lambda_i$, and its \emph{type} is the weak composition $type(T)=\beta = (\beta_1,\ldots,\beta_j)$ where $T$ has $\beta_i$ boxes containing an $i$ for all positive integers $i$. A \emph{standard Young tableau} (SYT) of size $n$ is a Young tableau in which the numbers $1$ through $n$ each appear exactly once.  A SSYT $T$ of type $\beta = (\beta_1, \ldots, \beta_k)$ is associated with the monomial $x^T = x_1^{\beta_1} \cdots x_k^{\beta_k}$.

\begin{defn}
For a partition $\lambda$, the \emph{Schur symmetric function} is defined as $$s_{\lambda} = \sum_T x^T,$$ where the sum runs over all semistandard Young tableaux $T$ of shape $\lambda$ with entries in $\mathbb{Z}_{> 0}$. These functions also form a basis of $Sym$. 
\end{defn}

A \emph{skew semistandard Young tableau} of shape $\lambda/\mu$ is a skew shape $\lambda/\mu$ filled positive integers such that the rows are weakly increasing left to right and the columns are strictly increasing top to bottom.  Note that the (removed) inner boxes of shape $\mu$ are not filled with integers. The \emph{skew Schur function} is defined as $$s_{\lambda/\mu} = \sum_T x^T,$$ where the sum runs over all skew SSYT of shape $\lambda/\mu$. The Schur and skew Schur functions have multiple equivalent definitions. We first review the definition using creation operators in $Sym$ and then the definition using a Jacobi-Trudi rule. 

$Sym$ is a self-dual algebra with the bilinear inner product $\langle \cdot, \cdot \rangle: Sym \rightarrow Sym$ defined by $\langle m_{\lambda}, h_{\mu} \rangle = \delta_{\lambda, \mu}$. For each element $f \in Sym$, there is an operator, known as the \emph{perp operator}, $f^{\perp}: Sym \times Sym \rightarrow \mathbb{Q}$ defined according to the relation $\langle fg,h \rangle = \langle g, f^{\perp}h \rangle $ for all $g,h \in Sym$.  Thus, given $\{a_{\lambda}\}_{\lambda}$ and $\{b_{\lambda}\}_{\lambda}$ are any two dual bases of $Sym$, we have $$f^{\perp}(g) = \sum_{\lambda} \langle g, f a_{\lambda} \rangle b_{\lambda}.$$ This expression expands the output of a perp operator on a function in terms of any basis with coefficients given by values of the inner product on $Sym$.

\begin{defn} For an integer $m$, the \emph{Bernstein creation operator} $B_m: Sym_n \rightarrow Sym_{m+n}$ is defined by $$B_m = \sum_{i \geq 0} (-1)^i h_{m+i}e^{\perp}_{i} = \sum_{i \geq 0} (-1)^i h_{m+i} m^{\perp}_{1^i}.$$
\end{defn}

As shown below, applying the Bernstein creation operators in sequence produces a Schur symmetric function. Note that this definition also yields Schur functions defined on compositions that are not partitions, although these functions are not basis elements.

\begin{thm} \cite{Zel} For all tuples $\alpha \in \mathbb{Z}^m$, $$s_{\alpha} = B_{\alpha_1} B_{\alpha_2} \cdots B_{\alpha_m}(1),$$ where $s_{0} = 1$ and $s_{-i}=0$ for $i > 0$.     
\end{thm}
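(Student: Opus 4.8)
The plan is to prove the identity at the level of generating functions by packaging the Bernstein operators into a single vertex operator and pushing all of the perp factors to the right. Introduce the generating series $H(z) = \sum_{k \geq 0} h_k z^k$, regarded as a multiplication operator on $Sym$, and $E^\perp(t) = \sum_{j \geq 0} e_j^\perp t^j$. Then by definition $B(z) := \sum_m B_m z^m = H(z)\,E^\perp(-1/z)$, so $B_m$ is the coefficient of $z^m$. Although $E^\perp(-1/z)$ involves negative powers of $z$, every manipulation below is applied to an input of bounded degree, on which $e_j^\perp$ vanishes for large $j$; thus only finitely many terms ever contribute and the formal computation is valid.

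The key input is a commutation relation between $E^\perp(t)$ and the multiplication operator $H(w)$. First I would record that $e_a^\perp h_k = h_k$ when $a=0$, $e_a^\perp h_k = h_{k-1}$ when $a=1$, and $e_a^\perp h_k = 0$ for $a \geq 2$; this follows from $\langle e_a^\perp h_k, s_\mu \rangle = \langle h_k, e_a s_\mu \rangle$ together with the Pieri rule and $h_k = s_{(k)}$. Combining this with the fact that a perp operator acts on a product through the coproduct, $e_j^\perp(h_k f) = \sum_{a+b=j}(e_a^\perp h_k)(e_b^\perp f)$, and summing against $t^j$ yields
$$E^\perp(t)\,H(w) = (1+tw)\,H(w)\,E^\perp(t)$$
as operators. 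Specializing $t = -1/z_i$ and $w = z_j$ gives the scalar factor $(1 - z_j/z_i)$ whenever an $E^\perp(-1/z_i)$ is moved past an $H(z_j)$.

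With this in hand, write
$$B_{\alpha_1}\cdots B_{\alpha_m}(1) = [z_1^{\alpha_1}\cdots z_m^{\alpha_m}]\; H(z_1)E^\perp(-1/z_1)\cdots H(z_m)E^\perp(-1/z_m)(1).$$
Moving each $E^\perp(-1/z_i)$ to the far right, it commutes past every later $E^\perp$ and emits the scalar $(1 - z_j/z_i)$ as it passes each $H(z_j)$ with $j > i$; once all the perp operators reach the right they act on $1$, where only the $e_0^\perp$ term survives and returns $1$. This collapses the expression to
$$B_{\alpha_1}\cdots B_{\alpha_m}(1) = [z_1^{\alpha_1}\cdots z_m^{\alpha_m}]\;\prod_{1\le i<j\le m}\!\!\Big(1 - \tfrac{z_j}{z_i}\Big)\prod_{i=1}^m H(z_i).$$
To finish, expand the Vandermonde-type product using $\prod_{i<j}(z_i - z_j) = \det(z_i^{m-j}) = \sum_{\sigma \in S_m}\operatorname{sgn}(\sigma)\prod_i z_i^{m-\sigma(i)}$ and divide by $\prod_{i<j} z_i = \prod_i z_i^{m-i}$ to get $\prod_{i<j}(1 - z_j/z_i) = \sum_\sigma \operatorname{sgn}(\sigma)\prod_i z_i^{i-\sigma(i)}$. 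Extracting the coefficient of $\prod_i z_i^{\alpha_i}$ against $\prod_i H(z_i) = \prod_i \sum_{a_i}h_{a_i}z_i^{a_i}$ forces $a_i = \alpha_i - i + \sigma(i)$, so the coefficient is $\sum_\sigma \operatorname{sgn}(\sigma)\prod_i h_{\alpha_i - i + \sigma(i)} = \det(h_{\alpha_i - i + j})_{1\le i,j\le m}$, which is exactly the Jacobi-Trudi determinant defining $s_\alpha$ (and which agrees with the combinatorial $s_\lambda$ for partitions). The boundary conventions drop out automatically: $h_0 = 1$ gives $s_{(0)} = 1$ and $h_{-i} = 0$ gives $s_{(-i)} = 0$.

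The main obstacle is getting the commutation relation and the subsequent formal bookkeeping exactly right: one must justify manipulating a series with negative powers of $z$ (handled by the bounded-degree remark above), track the order of the operators so the scalars $(1-z_j/z_i)$ accumulate over precisely the pairs $i < j$, and confirm the sign and column conventions in the Vandermonde expansion so that the final alternating sum is literally $\det(h_{\alpha_i-i+j})$ rather than its transpose or negative. An alternative, more combinatorial route would prove the one-step recursion $B_m s_\lambda = s_{(m,\lambda)}$ for partitions via the Pieri and dual Pieri rules and a sign-reversing involution, then extend to all tuples using the straightening law $B_m B_n = -B_{n-1}B_{m+1}$ (itself a consequence of the same commutation relation); but the generating-function argument above proves the statement for arbitrary $\alpha \in \mathbb{Z}^m$ in one stroke and is the route I would take.
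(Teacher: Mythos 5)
The paper does not prove this statement; it is quoted as background from Zelevinsky's work, so there is no in-paper argument to compare against. Your proof is correct and is essentially the classical vertex-operator derivation: the generating-function identity $B(z)=H(z)E^{\perp}(-1/z)$ is right, the commutation relation $E^{\perp}(t)H(w)=(1+tw)H(w)E^{\perp}(t)$ follows exactly as you say from $e_a^{\perp}h_k$ vanishing for $a\ge 2$ together with the coproduct rule for perp operators, the bookkeeping that each $E^{\perp}(-1/z_i)$ emits $(1-z_j/z_i)$ precisely for the pairs $i<j$ is accurate, and the Vandermonde expansion lands on $\det(h_{\alpha_i-i+j})_{1\le i,j\le m}$ with the correct signs. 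Two small points worth making explicit if you write this up: first, the identification of $\det(h_{\alpha_i-i+j})$ with $s_{\alpha}$ for non-partition tuples is a matter of convention (the paper only defines $s_{\lambda}$ combinatorially for partitions and extends via this theorem), so you should state that you are taking the Jacobi--Trudi determinant as the definition of $s_{\alpha}$ for general $\alpha\in\mathbb{Z}^m$ and invoking the classical Jacobi--Trudi theorem, which the paper records separately, to match the combinatorial $s_{\lambda}$ when $\alpha$ is a partition; second, your remark that the negative powers of $z$ cause no trouble because each $E^{\perp}(-1/z_i)$ is applied to an element of bounded degree is the right justification and should be kept, since $\prod_{i<j}(1-z_j/z_i)$ is then a genuine Laurent polynomial and coefficient extraction is unambiguous.
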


The skew Schur functions (of which the Schur functions are a subset) can be equivalently defined with a \emph{Jacobi-Trudi Rule.} This rule expresses the skew Schur functions as matrix determinants in terms of the complete homogeneous basis.

\begin{thm}\cite{EC2} Let $\lambda = (\lambda_1, \ldots, \lambda_k)$ and $\mu=(\mu_1, \ldots, \mu_k) \subseteq \lambda$. Then, $$s_{\lambda/\mu} = det(h_{\lambda_i - \mu_j - i + j})_{1 \leq i,j \leq k},$$ where $h_{0}=1$ and $h_{-i} = 0$ for any $i > 0$. 
\end{thm}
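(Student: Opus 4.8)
The plan is to prove this combinatorially with the Lindstr\"{o}m--Gessel--Viennot (LGV) lemma on non-intersecting lattice paths, which makes the determinant manifestly equal to a weight-generating sum over skew semistandard Young tableaux. The starting point is the observation that $h_m$ is itself a path-generating function: working in a fixed but large number of variables $x_1, \ldots, x_N$, consider directed lattice paths in $\mathbb{Z}^2$ built from unit north and east steps, where an east step at height $y$ carries the weight $x_y$ and every north step carries weight $1$. A path from $(a,0)$ to $(b,N)$ makes exactly $b-a$ east steps, and the heights at which these steps occur form a weakly increasing sequence $1 \le y_1 \le \cdots \le y_{b-a} \le N$; summing weights over all such paths therefore yields $h_{b-a}(x_1, \ldots, x_N)$. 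Since an identity of symmetric functions holds as soon as it holds in $N$ variables for every $N$, it suffices to establish the determinant identity in this finite setting.

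Next I would choose the endpoints so that the path model reproduces the matrix. Setting the sources $A_j = (\mu_j - j + 1,\, 0)$ and the sinks $B_i = (\lambda_i - i + 1,\, N)$, a path from $A_j$ to $B_i$ makes $(\lambda_i - i + 1) - (\mu_j - j + 1) = \lambda_i - \mu_j - i + j$ east steps, so the generating function for all such paths is exactly the $(i,j)$ entry $h_{\lambda_i - \mu_j - i + j}$. Because $\lambda$ and $\mu$ are weakly decreasing, the horizontal coordinates $\lambda_i - i + 1$ and $\mu_j - j + 1$ are each strictly decreasing in their index, which arranges the sources and sinks in the staircase configuration LGV requires: any family realizing a non-identity permutation of the sinks must contain two crossing paths, so after the signed cancellation guaranteed by the lemma only the identity permutation survives. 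Hence $\det(h_{\lambda_i - \mu_j - i + j})$ equals the total weight of all families of pairwise non-intersecting paths joining $A_i$ to $B_i$.

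Finally I would identify these non-intersecting families with skew SSYT of shape $\lambda/\mu$. The $i$-th path, reading off the heights of its $\lambda_i - \mu_i$ east steps, gives a weakly increasing sequence that I record as the entries of row $i$ of $\lambda/\mu$; this is weight-preserving since the east-step weights multiply to $x^T$. The content of the argument, and the step I expect to be the main obstacle, is verifying that two consecutive paths are non-crossing if and only if the corresponding rows satisfy the column-strict condition of an SSYT: this is precisely where the coordinate shift by $-i$ does its work, converting the strict column inequality between row $i$ and row $i+1$ into vertical separation of the two paths. Granting this bijection, the LGV sum is exactly $\sum_T x^T = s_{\lambda/\mu}$, completing the proof.

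As an alternative more in keeping with the algebraic tools just introduced, one could instead derive the rule from the Bernstein operator expression $s_\alpha = B_{\alpha_1}\cdots B_{\alpha_m}(1)$ together with the relation $\sum_{i \ge 0} (-1)^i h_{n-i} e_i = 0$ for $n \ge 1$, expanding the determinant by cofactors along a row; however, I expect the lattice-path route to be cleaner and more transparent.
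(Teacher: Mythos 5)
The paper offers no proof of this statement: it is quoted as a classical result with a citation to \cite{EC2}, so there is nothing internal to compare against. Your Lindstr\"om--Gessel--Viennot argument is correct and is in fact essentially the proof given in that reference: the endpoint choice $A_j = (\mu_j - j + 1, 0)$, $B_i = (\lambda_i - i + 1, N)$ does produce the matrix entries $h_{\lambda_i - \mu_j - i + j}$, the strict monotonicity of $\mu_j - j$ and $\lambda_i - i$ (which follows from $\lambda$ and $\mu$ being partitions) forces the staircase configuration that kills all non-identity permutations, and the step you flag --- that vertex-disjointness of consecutive paths is equivalent to column-strictness of consecutive rows --- is the genuine crux but is a routine index computation once the east step of $P_i$ in column $c$ is located at horizontal position $c-i$. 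The only point worth making explicit when you write it up is that LGV cancellation here uses vertex-disjoint (not merely non-crossing) families, with the tail-swapping involution at the first intersection point; your sketch is compatible with this and the bijection to skew SSYT goes through as you describe.
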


While the Schur functions have many other properties, there are two that are most important for our purposes. First, $Sym$ possesses an involutive endomorphism $\omega: Sym \rightarrow Sym$ defined by $\omega(e_{\lambda}) = h_{\lambda}$. Applied to a Schur function it yields $\omega(s_{\lambda}) = s_{\lambda'}$. Second, the product of two Schur functions expanded in the Schur basis is given by $$s_{\lambda}s_{\mu} = \sum_{\nu \vdash |\lambda| + |\mu|} c_{\lambda, \mu}^{\nu} s_{\nu},$$ where $c^{\nu}_{\lambda, \mu}$ are the \emph{Littlewood-Richardson coefficients.} In general, we refer to this type of coefficients as \emph{structure coefficients}. These coefficients are central in the representation theory applications of the Schur basis among others. They also appear in the skew Schur functions, alongside the $s^{\perp}$ operator. That is $$s^{\perp}_{\mu} (s_{\lambda}) = s_{\lambda/\mu} = \sum_{\nu \models |\lambda|-|\mu|} c^{\lambda}_{\mu, \nu} s_{\nu}.$$ For more details on the symmetric functions and the Schur functions, see \cite{berg18, grinberg, EC2}.

\subsection{Quasisymmetric and noncommutative symmetric functions}
A \emph{quasisymmetric function} $f(x)$ with rational coefficients is a formal power series of the form 
$f(x) = \sum_{\alpha} b_{\alpha}x^{\alpha},$ where $\alpha$ is a weak composition of a positive integer,
 $x^{\alpha} = x_1^{\alpha_1}\ldots x_k^{\alpha_k}$, $b_{\alpha} \in \mathbb{Q}$, and the coefficients of monomials $x_{i_1}^{a_1}\ldots x_{i_k}^{a_k}$ and $x_{j_1}^{a_1}\ldots x_{j_k}^{a_k}$ are equal if $i_1 < \ldots  < i_k$ and $j_1 < \ldots  < j_k$. The algebra of quasisymmetric functions, denoted $QSym$, admits a Hopf algebra structure.  The two most common bases (indexed by compositions) of $Qsym$ are as follows. For a composition $\alpha$, the \textit{monomial quasisymmetric function} $M_{\alpha}$ is defined as $$M_{\alpha} = \sum_{i_1<\ldots <i_k}x_{i_1}^{\alpha_1}\ldots x_{i_k}^{\alpha_k},$$ where the sum runs over strictly increasing sequences of $k$ positive integers $i_1, \ldots, i_k \in \mathbb{Z}_{>0}$.  
The \textit{fundamental quasisymmetric function} $F_{\alpha}$ is defined as $$F_{\alpha} = \sum_{\beta \preceq \alpha}M_{\beta},$$  
where the sum runs over compositions $\beta$ that refine $\alpha$. The fundamental functions are also denoted $L_{\alpha}$ in the literature \cite{EC2}. The quasisymmetric functions were formally introduced by Gessel in his work \cite{gessel} on multipartite $P$-partitions. They are the terminal object in the category of combinatorial Hopf algebras, connect to the representation theory of $0$-Hecke algebras, and support new methods for solving problems of Schur-positivity \cite{Agu, mason}.

The algebra of \emph{noncommutative symmetric functions}, written \textit{NSym}, is the Hopf algebra dual to $Qsym$. It was introduced in \cite{gelfand} by Gelfand, Krob, Lascoux, Leclerc, Retakh, and Thibon. \textit{NSym} can be defined as the free algebra with generators $\{H_1,H_2, \ldots \}$ and no relations, meaning the generators do not commute,
$$\textit{NSym} = \mathbb{Q}  \left\langle H_1, H_2, \ldots \right\rangle.$$ 

For a composition $\alpha = (\alpha_1, \ldots, \alpha_k)$,  define the \emph{complete homogeneous noncommutative symmetric function} by $$H_{\alpha} = H_{\alpha_1}H_{\alpha_2} \ldots H_{\alpha_k}.$$ Then, the set $\{H_{\alpha}\}_{\alpha}$ forms a basis of \textit{NSym}. \textit{NSym} and $QSym$ are dually paired by the bilinear inner product defined by $\langle H_{\alpha}, M_{\beta} \rangle = \delta_{\alpha, \beta}$ for all compositions $\alpha, \beta$. The dual relationship between bases of \textit{NSym} and $QSym$ manifests as follows.

\begin{prop}\label{hopf_dual} \cite{Hoffman} Let $\mathcal{A}$ and $\mathcal{B}$ be dually paired algebras and let $\{a_i\}$ be a basis of $\mathcal{A}$. A basis $\{b_i\}$ of $\mathcal{B}$ is the unique basis that is dual to $\{a_i\}$ if and only if the following relationship holds for any pair of dual bases $\{c_i\}$ in $\mathcal{A}$ and $\{d_i\}$ in $\mathcal{B}$: $$a_i = \sum_{j} k_{i,j} c_j \text{\quad and \quad} d_j = \sum_{i} k_{i,j} b_i.$$
\end{prop}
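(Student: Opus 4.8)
The plan is to exploit the non-degeneracy of the pairing $\langle\cdot,\cdot\rangle$ between $\mathcal{A}$ and $\mathcal{B}$ to read off the coordinates of any element from its pairings against a dual basis, and then to translate the two displayed relations into a single matrix identity. First I would fix an arbitrary pair of dual bases $\{c_i\}$ of $\mathcal{A}$ and $\{d_i\}$ of $\mathcal{B}$, so that $\langle c_i, d_j\rangle = \delta_{i,j}$, and set $k_{i,j} := \langle a_i, d_j\rangle$. Expanding $a_i = \sum_j \lambda_{i,j} c_j$ in the basis $\{c_j\}$ and pairing against $d_l$ collapses the sum to $\lambda_{i,l} = \langle a_i, d_l\rangle = k_{i,l}$; hence the first relation $a_i = \sum_j k_{i,j} c_j$ holds automatically and merely fixes the notation for the coefficient matrix $K = (k_{i,j})$. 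The real content is therefore the equivalence between ``$\{b_i\}$ is dual to $\{a_i\}$'' and the second relation $d_j = \sum_i k_{i,j} b_i$.

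For the forward direction, I would assume $\langle a_i, b_l\rangle = \delta_{i,l}$, expand $d_j = \sum_i \mu_{i,j} b_i$ in the basis $\{b_i\}$, and pair against $a_l$; duality collapses the sum to give $\mu_{l,j} = \langle a_l, d_j\rangle = k_{l,j}$, which is exactly the second relation. For the converse, I would assume $d_j = \sum_i k_{i,j} b_i$ and pair both sides against $a_l$, obtaining $k_{l,j} = \sum_i \langle a_l, b_i\rangle\, k_{i,j}$. Writing $P = (\langle a_l, b_i\rangle)_{l,i}$, this is the matrix identity $K = PK$; since $\{d_j\}$ and $\{b_i\}$ are both bases, $K$ is the invertible change-of-basis matrix relating them, so cancelling $K$ yields $P = I$, i.e. $\langle a_l, b_i\rangle = \delta_{l,i}$ as desired.

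Two bookkeeping points deserve care. Because $\mathcal{A}$ and $\mathcal{B}$ are graded with finite-dimensional homogeneous components (here $QSym$ and \textit{NSym}, indexed by compositions of each fixed size $n$), all of the change-of-basis matrices above should be read one degree at a time, so that ``invertible matrix'' literally means a finite square matrix and the cancellation $K = PK \Rightarrow P = I$ is legitimate. The non-degeneracy of the pairing also supplies the uniqueness asserted in the statement: any two bases dual to $\{a_i\}$ have identical pairings against every $a_i$, hence identical coordinates in any fixed basis, and so coincide.

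I expect the main obstacle to be organizational rather than deep: keeping the two index conventions straight (which index is summed in each relation, and whether the coefficient matrix or its transpose appears), and being explicit that the first relation is definitional while only the second carries the duality information. Once the identity $K = PK$ is in hand, the invertibility of $K$ — guaranteed by the fact that it relates two genuine bases within each graded piece — finishes the argument immediately.
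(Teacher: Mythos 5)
Your argument is correct. Note that the paper itself offers no proof of this proposition --- it is stated as a cited result from \cite{Hoffman} --- so there is no in-paper argument to compare against; what you have written is the standard linear-algebra verification, and it is sound. The observation that the first displayed relation is purely definitional (it just names $k_{i,j} = \langle a_i, d_j\rangle$) while all the duality content sits in the second relation is exactly the right way to organize the equivalence, and the cancellation $K = PK \Rightarrow P = I$ is legitimate precisely for the reason you give: the pairing and all change-of-basis matrices restrict to the finite-dimensional graded components (compositions of a fixed size $n$), where $K$ is a genuine invertible square matrix. The uniqueness clause via non-degeneracy is also handled correctly. The only stylistic remark is that in the converse you justify the invertibility of $K$ via the assumed relation between $\{d_j\}$ and $\{b_i\}$; it is marginally cleaner to note that $K$ is already invertible because it relates the two bases $\{a_i\}$ and $\{c_j\}$ of $\mathcal{A}$, independent of any hypothesis on $\{b_i\}$ --- but either justification works.
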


We consider two other classical bases of \textit{NSym} from \cite{gelfand}. For a composition $\alpha$, the \emph{ribbon noncommutative symmetric function} is defined as $$R_{\alpha}=\sum_{\alpha \preceq \beta }(-1)^{\ell{(a)}-\ell{(\beta)}}H_{\beta}, \text{\quad and so \quad} H_{\beta} = \sum_{\alpha \preceq \beta} R_{\alpha}.$$ The ribbon functions are a basis of \textit{NSym} dual to the fundamental basis of $QSym$, meaning $\langle R_{\alpha}, F_{\beta} \rangle = \delta_{\alpha, \beta}$.  For a composition $\alpha$, the \emph{elementary noncommutative symmetric function} is defined as $$E_{\alpha} = \sum_{\beta \preceq \alpha} (-1)^{|\alpha|-\ell(\beta)}H_{\beta}, \text{ \quad and so \quad } H_{\beta} = \sum_{\alpha \preceq \beta} (-1)^{|\beta|- \ell(\alpha)} E_{\alpha}.$$ 
 
Multiplication in \textit{NSym} on these bases is given by 
\begin{equation}\label{HR_mult}
    H_{\alpha} H_{\beta} = H_{\alpha \cdot \beta} \text{,\qquad} R_{\alpha}R_{\beta} = R_{\alpha \cdot \beta} + R_{\alpha \odot \beta} \text{,\qquad and \qquad} E_{\alpha} E_{\beta} = E_{\alpha \cdot \beta}.
\end{equation}

The \emph{forgetful map} $\chi: \textit{NSym} \rightarrow Sym$ maps $$\chi(H_{\alpha}) = h_{sort(\alpha)},$$ extended linearly to map all elements in \textit{NSym} to their commutative image in $Sym$. The forgetful map is a Hopf algebra morphism that is dual to the inclusion of $Sym$ to $QSym$ \cite{BRRZ}. 

A \emph{Schur-like basis} $\{\mathcal{S}_{\alpha}\}_{\alpha}$ of \textit{NSym} is a basis where $\chi(\mathcal{S}_{\lambda}) = s_{\lambda}$ for any partition $\lambda$. One might also consider a basis of \textit{NSym} to be Schur-like if the commutative image of some subset is the Schur functions or if the basis has sufficiently Schur-like properties. A Schur-like basis of $QSym$ is generally one that is dual to a Schur-like basis of \textit{NSym} and is usually defined combinatorially in terms of tableaux that resemble or generalize the semistandard Young tableaux. The interested reader should see \cite{berg18} and \cite{Luoto2013} for information on other Schur-like bases: the immaculate and Young noncommutative Schur bases and their duals, respectively. For more details on the quasisymmetric and noncommutative symmetric functions see \cite{gelfand, grinberg, mason}.

\subsection{The extended Schur functions}

The extended Schur functions of $QSym$ were introduced by Assaf and Searles in \cite{Assa} as the stable limits of polynomials related to Kohnert diagrams, and defined independently by Campbell, Feldman, Light, Shuldiner, and Xu in  \cite{Camp} as the duals to the shin functions. We use the name ``extended Schur functions'' but otherwise retain the notation and terminology of the dual shin functions.

\begin{defn} Let $\alpha$ and $\beta$ be a composition and a weak composition of $n$, respectively. A \emph{shin-tableau} of shape $\alpha$ and type $\beta$ is a labeling of the boxes of the diagram of $\alpha$ by positive integers such that the number of boxes labeled by $i$ is $\beta_i$, the sequence of entries in each row is weakly increasing from left to right, and the sequence of entries in each column is strictly increasing from top to bottom. 
\end{defn}

Note that shin-tableaux are a direct generalization of semistandard Young tableaux to composition shapes.

\begin{ex} The shin-tableaux of shape $(3,4)$ and type $(1,2,1,1,2)$ are
$$\scalebox{.85}{\begin{ytableau}
    1 & 2 & 2\\
    3 & 4 & 5 & 5\\
\end{ytableau}
\qquad
\begin{ytableau}
    1 & 2  & 3\\
    2 & 4 & 5 & 5\\
\end{ytableau}
\qquad
\begin{ytableau}
    1 & 2  & 4\\
    2 & 3 & 5 & 5  
\end{ytableau}}
$$
\end{ex}

A shin-tableau \let\thefootnote\relax\footnote{$\shin$ is the Hebrew character \emph{shin}.}of shape $\alpha \models n$ is \emph{standard} if each number $1$ through $n$ appears exactly once. The \emph{descent set} is defined as $Des_{\shin}(U) = \{i : i+1 \text{ is strictly below $i$ in $U$} \}$ for a standard shin-tableau $U$. Each entry $i$ in $Des_{\shin}(U)$ is called a \emph{descent} of $U$. The \emph{descent composition} of $U$ is defined $co_{\shin}(U) = (i_1, i_2 - i_1, \ldots, i_{d}-i_{d-1}, n-i_{d})$ for $Des_{\shin}(U) = \{i_1, \ldots, i_d\}$. Equivalently, the descent composition is found by counting the number of entries in $U$ (in the order they are numbered) after each descent to the next one, including that descent.  The \emph{shin reading word} of a shin-tableau $T$, denoted $rw_{\shin}(T)$, is the word obtained by reading the rows of $T$ from left to right starting with the bottom row and moving up. To \emph{standardize} a shin-tableau $T$, replace the $1$'s in $T$ with $1,2,\ldots$ in the order they appear in $rw_{\shin}(T)$, then the $2$'s starting with the next consecutive number, etc.

A shin-tableau $T$ of type $\beta = (\beta_1, \ldots, \beta_{\ell})$ is associated with the monomial $x^T = x_1^{\beta_1}x_{2}^{\beta_2} \cdots x_{\ell}^{\beta_{\ell}}$.  This yields the following polynomials.

\begin{defn}
For a composition $\alpha$, the \emph{extended Schur function} is defined as $$\shin^*_{\alpha} = \sum_T x^T,$$ where the sum runs over shin-tableaux $T$ of shape $\alpha$. 
\end{defn}

 The extended Schur functions have positive expansions into the monomial and fundamental bases in terms of shin-tableaux \cite{Assa, Camp}. For a composition $\alpha$, 
\begin{equation}
    \shin^*_{\alpha} = \sum_{\beta} \mathcal{K}_{\alpha, \beta}M_{\beta} \text{\qquad and \qquad} \shin^*_{\alpha} = \sum_{\beta } \mathcal{L}_{\alpha,\beta} F_{\beta },
\end{equation} where $\mathcal{K}_{\alpha, \beta}$ denotes the number of shin-tableaux of shape $\alpha$ and type $\beta$, and $\mathcal{L}_{\alpha, \beta}$ denotes the number of standard shin-tableaux of shape $\alpha$ with descent composition $\beta$. Note that Assaf and Searles showed in \cite{Assa} that $\shin^*_{\alpha} = F_{\alpha}$ if and only if $\alpha$ is a reverse hook shape $(1^k, m)$ for $k, m \geq 0$.

\begin{ex} The $F$-expansion of $\shin^*_{(2,3)}$ and its relevant standard shin-tableaux are: 
$$\shin^*_{(2,3)} = F_{(2,3)} + F_{(1,2,2)} \qquad \scalebox{.75}{\begin{ytableau}
        1 & 2  \\
        3 & 4 & 5
    \end{ytableau}
    \quad \quad 
    \begin{ytableau}
        1 & 3 \\
        2 & 4 & 5
    \end{ytableau}}$$
\end{ex}

From these definitions, it follows that $\shin^*_{\lambda}= s_{\lambda}$ for a partition $\lambda$. One consequence of this fact is that the product of any two elements of the extended Schur basis indexed by partitions expands positively into the extended Schur basis with the Littlewood-Richardson coefficients. That is, for partitions $\lambda$ and $\mu$,

$$\shin^*_{\lambda} \shin^*_{\mu} = \sum_{\nu \vdash |\lambda| + |\mu|} c^{\nu}_{\lambda, \mu} \shin^*_{\nu},$$ where $c_{\lambda, \mu}^{\nu}$ are the Littlewod-Richardson coefficients.

\subsection{The Shin functions}

The shin basis of \textit{NSym} was introduced in \cite{Camp} by Campbell, Feldman, Light, Shuldiner, and Xu. Let $\alpha$ and $\beta$ be compositions.  Then $\beta$ is said to differ from $\alpha$ by a \emph{shin-horizontal strip} of size $r$, denoted $\alpha \subset^{\shin}_r \beta$,  provided for all $i$, we have $\beta_i \geq \alpha_i$, $|\beta| = |\alpha|+r$, and for all indices $i \in \mathbb{N}$ if $\beta_i > \alpha_i$ then for all $j>i$, we have $\beta_j \leq \alpha_i$. The shin functions are defined recursively with a right Pieri rule using shin-horizontal strips. 

 \begin{defn}\label{shin_r_pieri}
     The shin basis $\{\shin_{\alpha}\}_{\alpha}$ is defined as the unique set of functions such that  \begin{equation}
         \shin_{\alpha}H_r = \sum_{\alpha \subset^{\shin}_r \beta}\shin_{\beta},
     \end{equation} where the sum runs over all compositions $\beta$ which differ from $\alpha$ by a shin-horizontal strip of size $r$.
 \end{defn}

Intuitively, the $\beta$'s in the right-hand side are given by taking the diagram of $\alpha$ and adding $r$ blocks on the right such that if you add boxes to some row $i$ then no row below $i$ is longer than the original row $i$. This is referred to as the \emph{overhang rule}.

\begin{ex} The following example can be pictured with the diagrams below.
$$\shin_{(2,3,1)}H_{(2)} = \shin_{(2,3,1,2)} + \shin_{(2,3,2,1)} + \shin_{(2,3,3)} +\shin_{(2,4,1,1)} + \shin_{(2,4,2)} + \shin_{(2,5,1)}$$\vspace{-2mm}

    \center{
\scalebox{.85}{
    \begin{ytableau}
        *(white) & *(white) \\
        *(white) & *(white) & *(white) \\
        *(white) \\
        *(lightgray!50) & *(lightgray!50)
    \end{ytableau}
    \quad \quad
     \begin{ytableau}
        *(white) & *(white) \\
        *(white) & *(white) & *(white) \\
        *(white) & *(lightgray!50)\\
        *(lightgray!50)
    \end{ytableau}
    \quad \quad
     \begin{ytableau}
        *(white) & *(white) \\
        *(white) & *(white) & *(white) \\
        *(white) & *(lightgray!50) & *(lightgray!50)\\
    \end{ytableau}
    \quad \quad
     \begin{ytableau}
        *(white) & *(white) \\
        *(white) & *(white) & *(white) & *(lightgray!50)\\
        *(white) \\
        *(lightgray!50)
    \end{ytableau}
    \quad \quad
     \begin{ytableau}
        *(white) & *(white) \\
        *(white) & *(white) & *(white) & *(lightgray!50)\\
        *(white) & *(lightgray!50)\\
    \end{ytableau}
    \quad \quad
     \begin{ytableau}
        *(white) & *(white) \\
        *(white) & *(white) & *(white) & *(lightgray!50) & *(lightgray!50)\\
        *(white) \\
    \end{ytableau}}}
\end{ex}\vspace{2mm}

\begin{rem} One computes $\shin_{(3,2)}$ recursively in terms of the complete homogeneous basis starting with $\shin_{\emptyset}H_{(n)} = \shin_{(n)}$ so $H_{(n)}= \shin_{(n)}$ for all $n$. Then, $\shin_{(4)}H_{(1)} = \shin_{(5)}+ \shin_{(4,1)}$ so $\shin_{(4,1)} = H_{(4)}H_{(1)} - H_{(5)}$.  Next, $\shin_{(3)}H_{(2)} = \shin_{(5)} + \shin_{(4,1)} + \shin_{(3,2)}$ so $\shin_{(3,2)} = H_{(3)}H_{(2)} - H_{(5)}- H_{(4)}H_{(1)} + H_{(5)} = H_{(3,2)} - H_{(4,1)}.$  
\end{rem}

Repeated application of this right Pieri rule yields the expansion of a complete homogeneous noncommutative symmetric function in terms of the shin functions. This expansion verifies that the extended Schur functions and the shin functions are dual bases by Proposition \ref{hopf_dual}, and allows for the expansion of the ribbon functions into the shin basis dually to the expansion of the extended Schur functions into the fundamental basis.  For any composition $\beta$,  \begin{equation}\label{HR_to_shin}
H_{\beta} = \sum_{\alpha} \mathcal{K}_{\alpha, \beta} \shin_{\alpha} \text{\qquad and \qquad} R_{\beta} = \sum_{\alpha} \mathcal{L}_{\alpha,\beta} \shin_{\alpha}.
\end{equation}

The shin basis also has a nice combinatorial rule for multiplication by a ribbon function using skew tableaux.

\begin{defn}
    For compositions $\alpha =(\alpha_1, \ldots, \alpha_k)$ and $\beta = (\beta_1, \ldots, \beta_{\ell})$ such that $\beta \subseteq \alpha$, a filling of the skew shape $\alpha/\beta$ with positive integers is a
    \emph{skew shin-tableau} if:
    \begin{enumerate}
        \item[(i)] each row is weakly increasing from left to right,
        \item[(ii)] each column is strictly increasing from top to bottom, and
        \item[(iii)] if $\alpha_i> \beta_i$ for any  $1 \leq i \leq k$, then $\beta_j < \beta_i$ for all $j > i$.
    \end{enumerate} A skew shin-tableau is \emph{standard} if it contains the numbers $1, \ldots, |\alpha|-|\beta|$ each exactly once. Note that the boxes associated with $\beta$ are not filled.
\end{defn}

\begin{ex}
    The three leftmost diagrams are skew shin-tableaux while the rightmost diagram is not because there are boxes in the first row sitting directly above skewed-out boxes in the second row, which violates condition (iii).

    $$ \text{Skew shin:\quad}
    \scalebox{.85}{
    \begin{ytableau}
        *(gray) & *(gray) & 1\\
        *(gray) &  1 & 2\\
        1 & 2
    \end{ytableau}
    \qquad 
    \begin{ytableau}
         *(gray) & *(gray) & 1\\
        *(gray)   \\
        1 & 1
    \end{ytableau}
    \qquad
    \begin{ytableau}
        *(gray)\\ 
        *(gray) & *(gray) & 1 \\
        1 & 1
    \end{ytableau}}
     \qquad
     \qquad
     \text{Not skew shin:\quad }
     \scalebox{.85}{
     \begin{ytableau}
         *(gray) & 1 & 1 \\
        *(gray) & *(gray) & 2 \\
        1 & 2
     \end{ytableau}}
    $$

\end{ex}

The notions of type, descents, and descent composition for skew shin-tableaux are the same as those for shin-tableaux. There are combinatorial rules for the multiplication of the shin basis by ribbon functions in terms of these skew shin-tableaux.

\begin{thm} \label{shin_right_R_mult} \cite{Camp} For all compositions $\alpha$ and $\beta$, $$\shin_{\alpha}R_{\beta} = \sum_{\gamma \models |\alpha|+|\beta|}\sum_{\mathcal{T}}\shin_{\gamma},$$ where the inner sum is over all standard skew shin-tableaux $\mathcal{T}$ of skew shape $\gamma/\alpha$ such that $co_{\shin}(\mathcal{T})=\beta$.
\end{thm}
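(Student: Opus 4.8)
The plan is to reduce the ribbon multiplication to the right Pieri rule of Definition~\ref{shin_r_pieri}, which I already know, and then convert from the complete homogeneous basis to the ribbon basis by a sign‑reversing (M\"obius) computation on the Boolean lattice of descent sets. Concretely, I would first establish an $H$‑analogue of the statement, then standardize, and finally invert.

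First I would iterate the Pieri rule. Writing $H_\delta = H_{\delta_1}\cdots H_{\delta_m}$ and applying $\shin_\alpha H_{\delta_1}\cdots H_{\delta_m}$ one factor at a time, each factor $H_{\delta_i}$ appends a shin‑horizontal strip of size $\delta_i$. A chain $\alpha = \gamma^{(0)} \subset^{\shin}_{\delta_1}\gamma^{(1)}\subset^{\shin}_{\delta_2}\cdots\subset^{\shin}_{\delta_m}\gamma^{(m)}=\gamma$ is recorded by filling the boxes added at the $i$‑th step with the entry $i$, and I would check this is a bijection onto the skew shin-tableaux of shape $\gamma/\alpha$ and type $\delta$, yielding $\shin_\alpha H_\delta = \sum_\gamma \mathcal{K}_{\gamma/\alpha,\delta}\,\shin_\gamma$, where $\mathcal{K}_{\gamma/\alpha,\delta}$ counts such tableaux. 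The verification amounts to matching the per-step overhang condition defining $\subset^{\shin}_{r}$ with conditions (i)--(iii): column-strictness forces each strip to contain at most one box per column (and, reading across levels, that columns strictly increase), while the clause ``$\beta_j \le \alpha_i$ whenever $\beta_i>\alpha_i$'' is exactly what prevents a filled box from sitting above a skewed-out box in a lower row, i.e. condition (iii).

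Next I would import, in its skew form, the standardization relationship already underlying the two expansions of $\shin^*_\alpha$ for straight shapes: a skew shin-tableau of type $\delta$ standardizes to a standard skew shin-tableau $U$ of the same shape, and the fibers of this map show that $\mathcal{K}_{\gamma/\alpha,\delta}$ equals the number of standard skew shin-tableaux $U$ of shape $\gamma/\alpha$ with $Des_{\shin}(U)\subseteq set(\delta)$; the proof is identical to the straight case since standardization preserves (i)--(iii). I would then expand $R_\beta = \sum_{\beta\preceq\delta}(-1)^{\ell(\beta)-\ell(\delta)}H_\delta$ and combine, so that the coefficient of $\shin_\gamma$ in $\shin_\alpha R_\beta$ is
$$\sum_{\delta:\, set(\delta)\subseteq set(\beta)}(-1)^{\ell(\beta)-\ell(\delta)}\,\#\{U : Des_{\shin}(U)\subseteq set(\delta)\} \;=\; \sum_{U}\ \sum_{set(\delta):\, Des_{\shin}(U)\subseteq set(\delta)\subseteq set(\beta)}(-1)^{\ell(\beta)-\ell(\delta)},$$
where $U$ ranges over standard skew shin-tableaux of shape $\gamma/\alpha$. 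Since $\ell(\beta)-\ell(\delta) = |set(\beta)\setminus set(\delta)|$ on this range, for each fixed $U$ the inner sum collapses over the Boolean interval $[\,Des_{\shin}(U),\,set(\beta)\,]$ to $\sum_{T\subseteq set(\beta)\setminus Des_{\shin}(U)}(-1)^{|T|}$, which is $1$ when $Des_{\shin}(U)=set(\beta)$ and $0$ otherwise. Hence only the tableaux with $co_{\shin}(U)=\beta$ survive, and the coefficient of $\shin_\gamma$ is precisely the number of standard skew shin-tableaux of shape $\gamma/\alpha$ with descent composition $\beta$, as claimed.

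I expect the main obstacle to be the first step: verifying rigorously that iterating the Pieri rule produces exactly the skew shin-tableaux, that is, that the local overhang condition of $\subset^{\shin}_{r}$, accumulated over the chain, is equivalent to the global conditions (i)--(iii) on $\gamma/\alpha$ (and, conversely, that every skew shin-tableau decomposes into a valid chain of shin-horizontal strips). The remaining two steps are a routine standardization argument and a standard inclusion--exclusion collapse on the Boolean lattice.
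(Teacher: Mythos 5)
The paper states this theorem as a citation to \cite{Camp} and supplies no proof of its own, so there is no internal argument to compare against; judged on its own terms, your proof is correct. It is also assembled from exactly the ingredients the paper does establish elsewhere: your first step (iterating the Pieri rule and encoding a chain $\alpha=\gamma^{(0)}\subset^{\shin}_{\delta_1}\cdots\subset^{\shin}_{\delta_m}\gamma$ as a skew shin-tableau of type $\delta$) is the content of Proposition \ref{skew_into_tableaux}; your standardization step is the argument carried out in detail for flipped tableaux in Proposition \ref{fshin*_to_F} and invoked for skew shapes later in the paper; and your inclusion--exclusion over the Boolean interval $[Des_{\shin}(U),set(\beta)]$ is a by-hand version of the change of basis from $M$ to $F$ --- equivalently one could finish by duality, reading $\langle \shin_{\alpha}R_{\beta},\shin^*_{\gamma}\rangle$ off as the coefficient of $F_{\beta}$ in $\shin^*_{\gamma/\alpha}=\sum_{U}F_{co_{\shin}(U)}$ via Equation \eqref{prop_shin_skew_coeffs}. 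Your sign bookkeeping is right: $\ell(\beta)-\ell(\delta)=|set(\beta)\setminus set(\delta)|$ on the range of summation, and tableaux with $Des_{\shin}(U)\not\subseteq set(\beta)$ contribute an empty, hence zero, inner sum, so only $co_{\shin}(U)=\beta$ survives.

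One concrete warning about the step you flag as the main obstacle. In the paper's notation for a skew shape $\alpha/\beta$ ($\alpha$ outer, $\beta$ inner), the accumulated overhang conditions of a chain yield the \emph{non-strict} statement that $\beta_{j}\leq\beta_{i}$ for all $j>i$ whenever $\alpha_{i}>\beta_{i}$, whereas condition (iii) as printed demands the strict inequality $\beta_{j}<\beta_{i}$. These are not equivalent: $(1,1)\subset^{\shin}_{1}(2,1)$ is a legal Pieri step, so $\langle\shin_{(1,1)}H_{(1)},\shin^{*}_{(2,1)}\rangle=1$, yet the shape $(2,1)/(1,1)$ has $\beta_{2}=\beta_{1}=1$ and admits no skew shin-tableaux under the strict reading. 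The inequality in (iii) must be read as $\leq$ for Proposition \ref{skew_into_tableaux}, and hence your step one, to hold; with that reading your matching of the local overhang conditions to (i)--(iii) goes through in both directions, the converse direction using column-strictness (applied to all boxes sharing a column, including across gaps) to show that the boxes carrying a fixed entry form a shin-horizontal strip.
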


Recall that the extended Schur functions have the special property that  $\shin^*_{\lambda} = s_{\lambda}$ for partitions $\lambda$. Since the forgetful map $\chi$ is dual to the inclusion map from $Sym$ to $QSym$, we have \begin{equation}
    \chi(\shin_{\alpha}) = \sum_{\lambda} \text{(coefficient of $\shin^*_{\alpha}$ in $s_{\lambda}$)} s_{\lambda} = \begin{cases}
        s_{\lambda} & \text{ if $\alpha = \lambda$,}\\
        0 & \text{otherwise.}
    \end{cases} 
\end{equation}
It follows that $\chi(\shin_{\lambda}) = s_{\lambda}$ when $\lambda$ is a partition and $\chi(\shin_{\alpha}) = 0$ otherwise. Another interesting feature of the shin functions is their relationship with the other two primary Schur-like bases: the immaculate functions and the Young noncommutative Schur functions. For any partition $\lambda$, the immaculate function $\mathfrak{S}_{\lambda}$ always equals the Young noncommutative Schur function $\hat{\mathbf{s}}_{\lambda}$, however the shin function $\shin_{\lambda}$ often differs. The first examples of this appear for some partitions $\lambda$ with $|\lambda|=7$, for example $\shin_{(2,2,2,1)}$. Additionally, while the dual immaculate functions expand positively into the Young quasisymmetric Schur functions, the extended Schur basis is not guaranteed to expand positively in either, or vice versa. Positive expansions of certain extended Schur functions into the Young quasisymmetric Schur basis are given by Marcum and Niese in \cite{MarNie}.

\section{A Jacobi-Trudi rule for certain shin functions}\label{section_creation}

The Schur functions and the immaculate functions can both be defined in terms of creation operators. In fact, the immaculate basis was originally defined in terms of the \emph{noncommutative Bernstein operators} \cite{berg18}. It is using these operators that one can prove various properties of the immaculate basis including the Jacobi-Trudi rule \cite{berg18}, a left Pieri rule \cite{bergerondualpieri}, a combinatorial interpretation of the inverse Kostka matrix \cite{AllMas}, and a partial Littlewood Richardson rule \cite{berg_structures}. Here we give similar creation operators for certain shin functions which then allow us to define a \emph{Jacobi-Trudi rule}. This rule is especially useful because there is currently no other combinatorial way to expand shin functions into the complete homogeneous basis.

\begin{defn} 
For a composition $\alpha = (\alpha_1, \alpha_2, \ldots, \alpha_k)$ with $k \geq 1$ and a positive integer $m$, define the linear operator $\beth_m$ on the complete homogeneous basis by \let\thefootnote\relax\footnote{$\beth$ is the hebrew character \emph{beth}.}
$$\beth_m(1) = H_m \text{\qquad and \qquad} \beth_m(H_{\alpha}) = H_{(m, \alpha_1, \alpha_2, \ldots)} - H_{(\alpha_1, m, \alpha_2, \ldots)}$$
\end{defn}

\begin{ex}
For example, $\beth_{2}(H_{(3,1)}) = H_{(2,3,1)} - H_{(3,2,1)}$. 
\end{ex}

Using this definition, we can describe how these operators act on certain shin functions.

\begin{thm}
If $\alpha = (\alpha_1, \alpha_2, \ldots, \alpha_k)$ with $k\geq 1$ and $0< m < \alpha_1$, then
    $\beth_m (\shin_{\alpha}) = \shin_{(m, \alpha)}.$ 
\end{thm}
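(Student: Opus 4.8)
The plan is to prove $\beth_m(\shin_\alpha)=\shin_{(m,\alpha)}$ by induction on $\ell(\alpha)$, with a secondary induction on the last part $\alpha_{\ell(\alpha)}$, driven by the right Pieri rule of Definition~\ref{shin_r_pieri}. The engine of the argument is that $\beth_m$ is compatible with right multiplication: a direct check on the definition shows $\beth_m(H_\gamma H_r)=\beth_m(H_\gamma)H_r$ for every nonempty composition $\gamma$ and every $r$, since appending $r$ to the end of $\gamma\cdot(r)$ does not interfere with inserting $m$ into the first or second position. Extending linearly, $\beth_m(fH_r)=\beth_m(f)H_r$ for all $f\in\textit{NSym}$ with no constant term, and in particular for $f=\shin_\gamma$ with $\gamma$ nonempty. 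I would emphasize that the no-constant-term hypothesis is essential: because $\beth_m(1)=H_m$ obeys a different rule than $\beth_m(H_\gamma)$, one can never commute $\beth_m$ past a multiplication applied to $\shin_\emptyset$.

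For the base case $\ell(\alpha)=1$, write $\alpha=(n)$ with $0<m<n$. Then $\beth_m(\shin_{(n)})=\beth_m(H_n)=H_{(m,n)}-H_{(n,m)}$, and I would expand both terms in the shin basis using $H_\beta=\sum_\gamma\mathcal{K}_{\gamma,\beta}\shin_\gamma$ from~\eqref{HR_to_shin}. Since a shin-tableau with entries in $\{1,2\}$ must place a $1$ at the top of its column and hence has all entries below the first row equal to $2$, such a tableau has at most two rows and is completely determined by its shape and type. Consequently each $\mathcal{K}_{\gamma,(m,n)}$ and $\mathcal{K}_{\gamma,(n,m)}$ is $0$ or $1$, governed by simple interval inequalities on the (at most two) parts of $\gamma$. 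A short case analysis then yields $\mathcal{K}_{\gamma,(m,n)}-\mathcal{K}_{\gamma,(n,m)}=\delta_{\gamma,(m,n)}$, so that $H_{(m,n)}-H_{(n,m)}=\shin_{(m,n)}$, as desired.

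For the inductive step, take $\alpha=(\alpha_1,\dots,\alpha_k)$ with $k\ge2$ and $0<m<\alpha_1$, and set $\gamma=(\alpha_1,\dots,\alpha_{k-1})$ and $r=\alpha_k$. Applying $\beth_m$ to the Pieri expansion $\shin_\gamma H_r=\sum_{\gamma\subset^{\shin}_{r}\delta'}\shin_{\delta'}$, commuting $\beth_m$ past $H_r$, and using the inductive hypothesis $\beth_m(\shin_\gamma)=\shin_{(m,\gamma)}$ (valid as $\ell(\gamma)=k-1$ and $\gamma_1=\alpha_1>m$), I get $\shin_{(m,\gamma)}H_r=\sum_{\gamma\subset^{\shin}_{r}\delta'}\beth_m(\shin_{\delta'})$. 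Expanding the left side by the Pieri rule once more, the hypothesis $m<\alpha_1$ forces the top row (of length $m$) never to be lengthened, because extending it would push every lower row to length at most $m$, contradicting $\delta_2\ge\gamma_1=\alpha_1>m$; hence every composition $\delta$ appearing on the left has the form $(m,\delta'')$, and $\delta''\mapsto(m,\delta'')$ is a bijection onto the terms with $\gamma\subset^{\shin}_{r}\delta''$. This rewrites the identity as $\sum_{\gamma\subset^{\shin}_{r}\delta'}\bigl(\beth_m(\shin_{\delta'})-\shin_{(m,\delta')}\bigr)=0$, a single sum over the set of $\delta'$ with $\gamma\subset^{\shin}_{r}\delta'$.

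It remains to isolate the term $\delta'=\alpha$. Each $\delta'$ in this sum has $\delta'_1\ge\alpha_1>m$ and length $k-1$ or $k$; the length-$(k-1)$ terms vanish by the primary (length) hypothesis, and among the length-$k$ terms $\alpha$ is the unique one whose last part attains the maximum $\alpha_k$, every other length-$k$ term having diverted at least one box into a higher row and thus having strictly smaller last part. These remaining terms vanish by the secondary (last-part) hypothesis, leaving $\beth_m(\shin_\alpha)=\shin_{(m,\alpha)}$. I expect the main obstacle to be the bookkeeping of this final step: one must confirm that $\alpha$ is genuinely extremal for the chosen order and that every other term in the sum is already covered by one of the two inductive hypotheses, which rests on the overhang condition of the shin-horizontal strip together with the role of $m<\alpha_1$ in pinning the top row.
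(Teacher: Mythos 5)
Your proof is correct and follows essentially the same route as the paper: the same commutation identity $\beth_m(fH_r)=\beth_m(f)H_r$ on constant-term-free elements, the same use of the right Pieri rule with the overhang argument pinning the top row at length $m$, and the same double induction on length and then on the final part. The only cosmetic difference is the base case, which you handle by expanding $H_{(m,n)}-H_{(n,m)}$ in the shin basis via the (0-or-1) counts of shin-tableaux with entries in $\{1,2\}$, whereas the paper cancels the two Pieri expansions $\shin_{(\alpha_1)}H_{\alpha_2}-\shin_{(\alpha_2)}H_{\alpha_1}$ directly; both verifications are valid.
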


\begin{proof} 

Define $f_{\alpha} = \beth_{\alpha_1}(\shin_{\bar{\alpha}})$ for any nonempty $\alpha = (\alpha_1, \ldots, \alpha_k)$ where $\alpha_1 < \alpha_2$ and $\bar{\alpha} = (\alpha_2, \ldots, \alpha_k)$.  We want to show that $f_{\alpha} = \shin_{\alpha}$ for all such $\alpha$. In the case that $\ell(\alpha)=1$, we simply have $f_{\alpha} = \beth_{\alpha_1}(1) = H_{\alpha_1} = \shin_{\alpha_1}$. 

Now, consider only $\alpha$ where $\ell(\alpha)>1$.  First, we show that the $f_{\alpha}$'s satisfy the right Pieri rule for shin functions. 
Observe that, for a positive integer $m$ and a nonempty composition $\gamma = (\gamma_1, \gamma_2, \ldots )$, 
$$\beth_m (H_{\gamma})H_s = H_{m, \gamma_1, \gamma_2, \ldots, s} - H_{\gamma_1, m, \gamma_2, \ldots, s} = \beth_m(H_{\gamma \cdot s}).$$  Since $\beth_m$ is a linear operator, $\beth_m(\shin_{\gamma})$ is equivalent to the sum of $\beth_m$ applied to each term in the H-expansion of $\shin_{\gamma}$. Thus,  $\beth_m(\shin_{\gamma})H_s = \beth_{m}(\shin_{\gamma}H_s)$ for any composition $\gamma$ and positive integer $s$. 
For $\alpha = (\alpha_1, \ldots, \alpha_k)$ where $\alpha_1 < \alpha_2$ and $\ell(\alpha)>1$, it follows that $f_{\alpha}H_m = \beth_{\alpha_1}(\shin_{\bar{\alpha}})H_m = \beth_{\alpha_1}(\shin_{\bar{\alpha}}H_m)$. We expand this expression further using the shin right Pieri rule (Definition \ref{shin_r_pieri}). Notice that if $\alpha_1 < \alpha_2$ and $\bar{\alpha} \subset^{\shin}_{m} \beta$ for a composition $\beta = (\beta_1, \ldots, \beta_{k-1})$, then 
$\alpha_2 \leq \beta_1$ which implies $\alpha_1 < \beta_1$. Thus, 
$$f_{\alpha}H_m = \beth_{\alpha_1} \left(\sum_{\bar{\alpha} \subset^{\shin}_{m} \beta} \shin_{\beta}\right) = \sum_{\bar{\alpha} \subset^{\shin}_{m} \beta} \beth_{\alpha_1}(\shin_{\beta}) = \sum_{\bar{\alpha} \subset^{\shin}_{m} \beta} f_{\alpha_1 \cdot \beta}.$$ 

Observe that every $\gamma$ where $\alpha \subset^{\shin}_{m} \gamma$ has $\alpha_1\leq \gamma_1$, and we can only have $\alpha_1 < \gamma_1$ if $\alpha_j \leq \gamma_j < \alpha_1$ for all $j>1$.  For $\alpha$, we know $\alpha_2 > \alpha_1$, implying that $\gamma_2$ could never be less than $\alpha_1$.  Thus, $\gamma_1 > \alpha_1$ is not an option because it would violate the overhang rule, meaning has $\gamma_1 = \alpha_1$. It follows that the set of $\gamma$ such that $\alpha \subset^{\shin}_{m} \gamma$ is the same as the set of $\alpha_1 \cdot \beta$ such that $\bar{\alpha} \subset^{\shin}_{m} \beta$. Therefore,
\begin{equation}\label{beth_pieri}
    f_{\alpha}H_m = \sum_{\alpha \subset^{\shin}_m \gamma} f_{\gamma}.
\end{equation}

Next, we show by recursive calculation that $f_{\alpha} = \shin_{\alpha}$. Let $\alpha = (\alpha_1, \ldots, \alpha_k)$ where $\ell(\alpha)>2$ and $\alpha_1 < \alpha _2$.  Let $\underline{\alpha} = (\alpha_1, \ldots, \alpha_{k-1})$ and $\underline{\bar{\alpha}} = (\alpha_2, \ldots, \alpha_{k-1})$, and note that because we assumed $\alpha_1 < \alpha_2$, we have $\beth_{\alpha_1}(\shin_{\underline{\bar{\alpha}}}) = f_{\underline{\alpha}}$.  Observe that $f_{\underline{\alpha}}H_{\alpha_k} = \sum_{\underline{\alpha} \subset^{\shin}_{\alpha_k} \beta} f_{\beta}$ by Equation \eqref{beth_pieri}, and rearranging this expression yields
\begin{equation}\label{recursive_shin}
f_{\alpha} = f_{\underline{\alpha}}H_{\alpha_k} - \sum_{\substack{\underline{\alpha} \subset^{\shin}_{\alpha_k} \beta \\
\beta \not= \alpha}} f_{\beta}.
\end{equation}
Thus, we have a recursive formula for $f_{\alpha}$ in terms of $f_{\underline{\alpha}}$ and $f_{\beta}$ where either $\ell(\beta)<\ell(\alpha)$ or $\ell(\beta) = \ell(\alpha)$ and $\beta_k < \alpha_k$. That is to say, our recursive formula is defined in terms of $f$ indexed either by a composition shorter than $\alpha$ or the same length as $\alpha$ but with a shorter final row. 

It simply remains to show that $f_{\alpha} = \shin_{\alpha}$ when $\ell(\alpha) = 2$ as our base case and this recursive definition implies $f_{\alpha} = \shin_{\alpha}$ because it matches the recursive definition for $\shin_{\alpha}$ given by manipulating the right Pieri rule of Definition \ref{shin_r_pieri} in the same way (and has the same base case). Let $\alpha = (\alpha_1, \alpha_2)$ with $\alpha_1 < \alpha_2$.  Then we have $$f_{\alpha} = \beth_{\alpha_1}(\shin_{\alpha_2}) = \beth_{\alpha_1}(H_{\alpha_2}) = H_{\alpha_1}H_{\alpha_2} - H_{\alpha_2}H_{\alpha_1}.$$ Next, observe by Definition \ref{shin_r_pieri} we have $$H_{\alpha_1}H_{\alpha_2} - H_{\alpha_2}H_{\alpha_1} = \shin_{\alpha_1} H_{\alpha_2} - \shin_{\alpha_2} H_{\alpha_1} = \sum_{\alpha_1 \subset^{\shin}_{\alpha_2} \beta} \shin_{\beta} - \sum_{\alpha_2 \subset_{\alpha_1}^{\shin} \gamma} \shin_{\gamma}.$$ The set of $\beta$ such that $\alpha_1 \subset_{\alpha_2}^{\shin} \beta$ is given by either $\beta = (\beta_1, \beta_2)$ where $\beta_1 \geq \alpha_2$ and $\beta_2 = \alpha_1 + \alpha_2 - \beta_1$ or $\beta_1 = \alpha_1$ and $\beta_2 = \alpha_2$, due to the overhang rule. The set of $\gamma$ such that $\alpha_2 \subset_{\alpha_1}^{\shin} \gamma$ is given by $\gamma = (\gamma_1, \gamma_2)$ where $\gamma_1 \geq \alpha_1$ and $\gamma_2 = \alpha_1 + \alpha_2 - \gamma_1$. Then $\sum_{\alpha_1 \subset^{\shin}_{\alpha_2} \beta} \shin_{\beta} - \sum_{\alpha_2 \subset_{\alpha_1}^{\shin} \gamma} \shin_{\gamma} = \shin_{(\alpha_1, \alpha_2)}$. Thus, we have shown that $f_{\alpha} = \shin_{\alpha}$ when $\ell(\alpha)=2$.
\end{proof}

\begin{ex} Applying $\beth_2$ to the $H$-expansion of $\shin_{(3,1)}$ yields the $H$-expansion of $\shin_{(2,3,1)}$.
    $$\shin_{(3,1)} = H_{(3,1)} - H_{(4)} \text{\qquad and \qquad} \shin_{(2,3,1)} = \beth_{2}(\shin_{(3,1)}) = H_{(2,3,1)} - H_{(3,2,1)} - H_{(2,4)} + H_{(4,2)}.$$
\end{ex}

The creation operators also allow us to construct shin functions indexed by strictly increasing compositions from the ground up.\begin{cor}
    Let $\beta = (\beta_1, \ldots, \beta_k)$ be a strictly increasing sequence where $\beta_i < \beta_{i+1}$ for all $i$. Then, $$\shin_{\beta} = \beth_{\beta_1} \cdots \beth_{\beta_k}(1).$$
\end{cor}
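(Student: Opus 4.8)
The plan is to prove the corollary by induction on $k$, leveraging the theorem that was just established, namely that $\beth_m(\shin_{\alpha}) = \shin_{(m, \alpha)}$ whenever $0 < m < \alpha_1$. The statement to prove is that for a strictly increasing composition $\beta = (\beta_1, \ldots, \beta_k)$, we have $\shin_{\beta} = \beth_{\beta_1} \cdots \beth_{\beta_k}(1)$, so the natural strategy is to peel off the operators one at a time from the outside in.

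\medskip

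First I would handle the base case $k = 1$. By the definition of $\beth_m$, we have $\beth_{\beta_1}(1) = H_{\beta_1}$, and since $\shin_{\beta_1} = H_{\beta_1}$ (as noted in the remark following Definition \ref{shin_r_pieri}, where $\shin_{(n)} = H_{(n)}$), the base case holds. For the inductive step, suppose the claim holds for all strictly increasing sequences of length $k-1$. Given a strictly increasing $\beta = (\beta_1, \ldots, \beta_k)$, set $\bar\beta = (\beta_2, \ldots, \beta_k)$, which is itself strictly increasing and of length $k-1$. By the inductive hypothesis, $\shin_{\bar\beta} = \beth_{\beta_2} \cdots \beth_{\beta_k}(1)$. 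Therefore $\beth_{\beta_1} \cdots \beth_{\beta_k}(1) = \beth_{\beta_1}(\shin_{\bar\beta})$.

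\medskip

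The key step is then to apply the theorem to the expression $\beth_{\beta_1}(\shin_{\bar\beta})$. To invoke the theorem, I must verify its hypothesis that $0 < \beta_1 < (\bar\beta)_1 = \beta_2$. This is precisely where the strictly increasing condition on $\beta$ is essential: since $\beta$ is strictly increasing and consists of positive integers, we have $0 < \beta_1 < \beta_2$, so the hypothesis is satisfied. Applying the theorem gives $\beth_{\beta_1}(\shin_{\bar\beta}) = \shin_{(\beta_1, \bar\beta)} = \shin_{\beta}$, which completes the induction.

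\medskip

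I do not anticipate a serious obstacle here, as this corollary is essentially a clean iteration of the theorem. The only point requiring care is confirming that the hypothesis $0 < \beta_1 < \beta_2$ of the theorem holds at the outermost operator $\beth_{\beta_1}$ and, in the inductive framing, that $\bar\beta$ remains strictly increasing so the inductive hypothesis applies to it. Both follow immediately from the assumption that $\beta$ is strictly increasing, so the proof amounts to unwinding the induction and citing the theorem with its hypothesis checked at each stage.
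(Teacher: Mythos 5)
Your proof is correct and is essentially the argument the paper intends: the corollary is stated as an immediate consequence of the preceding theorem, obtained by exactly the induction you describe, with the strict increase $\beta_1 < \beta_2$ guaranteeing the theorem's hypothesis at each stage. Nothing is missing.
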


\begin{ex}\label{shin_creation_op_ex} Creation operators can be used to build up a shin function as follows.
$$\shin_{(1,3,4)} = \beth_1 \beth_3 \beth_4 (1) = \beth_1 \beth_3 (H_{(4)}) = \beth_1 (H_{(3,4)} - H_{(4,3)}) = H_{(1,3,4)} - H_{(1,4,3)} - H_{(3,1,4)} + H_{(4,1,3)}.$$
\end{ex}

We also define a Jacobi-Trudi rule to express these same shin functions as matrix determinants. This formula is computationally much simpler and combinatorially more straightforward to work with. Let $S_{k}^{\geq}(-1)$ be the set of permutations $\sigma \in S_k$ such that $\sigma(i) \geq i-1$ for all $i \in [k]$.

\begin{thm} \label{JT_shin}
    Let $\beta = (\beta_1, \ldots, \beta_k)$ be a composition such that $\beta_i < \beta_{i+1}$ for all $i$. Then, $$\shin_{\beta} = \sum_{\sigma \in S_{k}^{\geq}(-1)} (-1)^{\sigma} H_{\beta_{\sigma(1)}} \cdots H_{\beta_{\sigma(k)}}.$$

    Equivalently, $\shin_{\beta}$ can be expressed as the matrix determinant of
    
    $$\shin_{\beta} = det \begin{vmatrix}
        H_{\beta_1} & H_{\beta_2} & H_{\beta_3}  & \cdots & H_{\beta_{k-2}} & H_{\beta_{k-1}} & H_{\beta_k}\\
        H_{\beta_1} & H_{\beta_2} & H_{\beta_3}  & \cdots & H_{\beta_{k-2}} &  H_{\beta_{k-1}} & H_{\beta_k}\\
        0 & H_{\beta_2} & H_{\beta_3}  & \cdots & H_{\beta_{k-2}} & H_{\beta_{k-1}} & H_{\beta_k}\\
        0 & 0 & H_{\beta_3}  &  \cdots & H_{\beta_{k-2}} & H_{\beta_{k-1}} & H_{\beta_k}\\
        \vdots & \vdots & \vdots & & \vdots & \vdots & \vdots \\
        0 & 0 & 0  &  \cdots & H_{\beta_{k-2}} & H_{\beta_{k-1}} & H_{\beta_k}\\
        0 & 0 & 0  & \cdots & H_{\beta_{k-2}} & H_{\beta_{k-1}} & H_{\beta_k}\\
        0 & 0 & 0  & \cdots & 0 & H_{\beta_{k-1}} & H_{\beta_k}
    \end{vmatrix}$$ \vspace{1mm}
    
    where we use the noncommutative analogue to the determinant obtained by expanding along the first row. 
\end{thm}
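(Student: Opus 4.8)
The plan is to prove the permutation-sum formula first, by induction on $k$ using the creation-operator construction, and then to recover the determinant as a repackaging of that formula via the zero pattern of the matrix. The underlying intuition is that expanding the product $\beth_{\beta_1}\cdots\beth_{\beta_k}(1)$ from the Corollary amounts to building an $H$-word by repeatedly inserting the next-smallest part $\beta_i$ at either position $1$ or position $2$, and I want to match these insertion choices with permutations in $S_k^{\ge}(-1)$.

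The base case $k=1$ is immediate: $\shin_{(\beta_1)}=H_{\beta_1}$ and $S_1^{\ge}(-1)=\{\mathrm{id}\}$. For $k\ge 2$, write $\bar\beta=(\beta_2,\ldots,\beta_k)$ and use $\shin_\beta=\beth_{\beta_1}(\shin_{\bar\beta})$, valid because $\beta_1<\beta_2$ (this is the theorem $\beth_m(\shin_\alpha)=\shin_{(m,\alpha)}$). Applying the inductive hypothesis to the strictly increasing $\bar\beta$ and then expanding $\beth_{\beta_1}$ term-by-term via $\beth_{\beta_1}(H_{(\gamma_1,\gamma_2,\ldots)})=H_{(\beta_1,\gamma_1,\gamma_2,\ldots)}-H_{(\gamma_1,\beta_1,\gamma_2,\ldots)}$ shows that $\shin_\beta$ is the signed sum obtained from $\shin_{\bar\beta}$ by inserting the smallest part $\beta_1$ into each $H$-word at position $1$ (sign $+$) or position $2$ (sign $-$).

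The combinatorial heart is a sign-preserving bijection $S_{k-1}^{\ge}(-1)\times\{1,2\}\to S_k^{\ge}(-1)$ realizing this insertion. Given $\rho\in S_{k-1}^{\ge}(-1)$ (encoding an $H$-word of $\shin_{\bar\beta}$ on the letters $\beta_2,\ldots,\beta_k$) and a choice $\epsilon\in\{1,2\}$, inserting $\beta_1$ at position $\epsilon$ produces a word on $\beta_1,\ldots,\beta_k$ and hence a permutation $\sigma$. I would check $\sigma\in S_k^{\ge}(-1)$: in the $\rho$-word the letter $\beta_m$ is the $(m-1)$-st smallest, so it sits at position $\le m$, and after inserting $\beta_1$ (which lands at position $\le 2$) every $\beta_m$ moves right by at most one, staying at position $\le m+1$, which is exactly $\sigma(i)\ge i-1$. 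Conversely, for $\sigma\in S_k^{\ge}(-1)$ the constraint forces $\sigma^{-1}(1)\le 2$, so deleting $\beta_1$ recovers a unique pair $(\rho,\epsilon)$, and the same position bookkeeping run in reverse confirms $\rho\in S_{k-1}^{\ge}(-1)$; the two maps are mutually inverse. For signs, since $\beta_1$ is smaller than every other letter, inserting it at position $\epsilon$ creates exactly $\epsilon-1$ new inversions (the letters pushed in front of it are all larger) and preserves all others, so $\operatorname{inv}(\sigma)=\operatorname{inv}(\rho)+(\epsilon-1)$. As $(-1)^{\sigma}=(-1)^{\operatorname{inv}(\sigma)}$, the sign of each term agrees with the sign $(-1)^{\epsilon-1}$ produced by $\beth_{\beta_1}$, completing the induction.

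For the determinant form I would read the displayed matrix $M$ through the stated convention — the noncommutative determinant expanded along the first row, with each cofactor's entries kept in increasing row order — which unwinds to $\sum_{\sigma\in S_k}(-1)^{\sigma}M_{1,\sigma(1)}\cdots M_{k,\sigma(k)}$. The matrix is built so that $M_{1,j}=H_{\beta_j}$ for all $j$, while for $i\ge 2$ we have $M_{i,j}=H_{\beta_j}$ when $j\ge i-1$ and $M_{i,j}=0$ otherwise. Hence the row-ordered product is nonzero exactly when $\sigma(i)\ge i-1$ for all $i$, i.e. $\sigma\in S_k^{\ge}(-1)$, in which case it equals $H_{\beta_{\sigma(1)}}\cdots H_{\beta_{\sigma(k)}}$; so the determinant reproduces the permutation sum just established. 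The main obstacle is the bijection step: verifying that membership in $S_k^{\ge}(-1)$ is preserved in both directions requires careful tracking of how positions shift under insertion and deletion of the smallest letter, and this position bookkeeping must be kept exactly in sync with the inversion count governing the sign.
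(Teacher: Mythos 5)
Your proposal is correct and follows essentially the same route as the paper: induction on $k$ via $\shin_{\beta}=\beth_{\beta_1}(\shin_{\bar\beta})$, splitting the two insertion positions of $H_{\beta_1}$ into the subsets $\{\sigma:\sigma(1)=1\}$ and $\{\sigma:\sigma(2)=1\}$ of $S_k^{\geq}(-1)$ (your bijection with $\epsilon\in\{1,2\}$ is exactly this partition, since $\sigma(i)\geq i-1$ forces $\sigma^{-1}(1)\leq 2$), and then reading off the determinant from the zero pattern. Your explicit inversion-count verification of the signs is a detail the paper leaves implicit, but the argument is the same.
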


\begin{proof}
    We proceed by induction on the length of $\beta$. If $\ell(\beta)=1$ then $\shin_{\beta} = H_{\beta} = det|H_{\beta_1}|$, and our claim holds. Assume that our claim holds for all $\beta$ with $\ell(\beta) = k-1$.  Now consider $\beta = (\beta_1, \ldots, \beta_k)$. Since $\beta_1 < \beta_2$ we have $\shin_{\beta} = \beth_{\beta_1}(\shin_{\bar{\beta}})$ where $\bar{\beta} = (\bar{\beta}_1, \ldots, \bar{\beta}_{k-1}) = (\beta_2, \ldots, \beta_{k})$. By our inductive assumption, $$\shin_{\bar{\beta}} = \sum_{\pi \in S_{k-1}^{\geq}(-1)} (-1)^{\pi} H_{\bar{\beta}_{\pi(1)}} \cdots H_{\bar{\beta}_{\pi(k-1)}},$$ where the sum runs over $\pi$ such that $\pi(i)\geq i-1$ for all $i \in [k-1]$. Then, applying $\beth_{\beta_1}$ yields
    \begin{align*}
       \shin_{\beta} &= \sum_{\pi \in S_{k-1}^{\geq}(-1)} (-1)^{\pi}H_{\beta_1}H_{\bar{\beta}_{\pi(1)}} \cdots H_{\bar{\beta}_{\pi(k-1)}} - \sum_{\pi \in S_{k-1}^{\geq}(-1)} (-1)^{\pi}H_{\bar{\beta}_{\pi(1)}}H_{\beta_1}H_{\bar{\beta}_{\pi(2)}} \cdots H_{\bar{\beta}_{\pi(k-1)}}\\ \intertext{Now we rewrite our sums in terms of some subset of permutations $\sigma$ of $k$ and $\beta$ with terms that look like $H_{\beta_{\sigma(1)}}H_{\beta_{\sigma(2)}} \cdots H_{\beta_{\sigma(k)}}$. We need to find the appropriate subset of $S_k$ so that we correctly rewrite our sum.  For the first portion, we need $H_{\beta_{\sigma(1)}} = H_{\beta_1}$ so we set $\sigma(1)=1$.  Note that for all $i \in [k-1]$ we have $\bar{\beta}_i = \beta_{i+1}$.
       We also need $H_{\beta_{\sigma(i)}} = H_{\bar{\beta}_{\pi(i-1)}}$ which would mean $\sigma(i) = \pi(i-1) + 1$. Then $\sigma(i)\geq i-1$ for any $1<i \leq k$ because $\pi \in S_{k-1}^{\geq}(-1)$. Since we already set $\sigma(1) = 1 \geq 0$, we have shown that the permutations $\sigma$ satisfy $\sigma(i) \geq i-1$ for all $i$. For the right-hand term, we need $H_{\beta_{\sigma(2)}} = H_{\beta_1}$ so we limit our sum to permutations where $\sigma(2)=1$.  Note that we need $\beta_{\sigma(1)} = \bar{\beta}_{\pi(1)} = \beta_{\pi(1)+1}$ so $\sigma(1) \geq 2$, and $\beta_{\sigma(i)} = \bar{\beta}_{\pi(i-1)} = \beta_{\pi(i-1)+1}$ so $\sigma(i) \geq i-1$ for all $i \geq 3$. These are exactly the permutations $\sigma$ such that $\sigma(i) \geq i-1$ and $\sigma(2)=1$. Therefore,}
       \shin_{\beta} &= \sum_{\substack{\sigma \in S_{k}^{\geq}(-1),\\ \sigma(1)=1}} (-1)^{\sigma} H_{\beta_{\sigma(1)}} H_{\beta_{\sigma(2)}} \cdots H_{\beta_{\sigma(k)}} + \sum_{\substack{\sigma \in S^{\geq}_{k}(-1)\\ \sigma(2)=1}} (-1)^{\sigma} H_{\beta_{\sigma(1)}} H_{\beta_{\sigma(2)}} \cdots H_{\beta_{\sigma(k)}}\\
       &= \sum_{\sigma \in S_{k}^{\geq}(-1)} (-1)^{\sigma}H_{\beta_{\sigma(1)}}H_{\beta_{\sigma(2)}} \cdots H_{\beta_{\sigma(k)}}. 
    \end{align*}

    This is equivalent to the matrix determinant described using the typical expansion of determinants in terms of permutations, excluding those terms that would be multiplied by 0.
\end{proof}

\begin{ex} The function from Example \ref{shin_creation_op_ex} expands as a matrix determinant as
    $$\shin_{(1,3,4)} = det \begin{vmatrix} H_1 & H_3 & H_4 \\
    H_1 & H_3 & H_4 \\
    0 & H_3 & H_4
    \end{vmatrix} = H_1H_3H_4 - H_1H_4H_3 - H_3H_1H_4 + H_4H_1H_3.$$
\end{ex}

 It remains open to find a combinatorial or algebraic way of understanding the expansion of the shin basis into the complete homogeneous basis for the general case. We can show by counterexample that there is not a matrix rule of this form for every shin function, including those indexed by partitions. The smallest counterexample for partitions is $\shin_{(2,2,2,1)}$. The argument for it is too long, and so instead we present a smaller example using similar logic.

\begin{ex}
 We have $\shin_{(2,2,4)} = H_{(2, 2, 4)} - H_{(2, 4, 2)} - H_{(3, 1, 4)} + H_{(4, 3, 1)} + H_{(5, 1, 2)} - H_{(5, 2, 1)}$. For the determinant of a $3 \times 3$ matrix of the form $(H_{b_{(i,j)}})_{1 \leq i,j \leq 3}$ with $b_{(i,j)} \in \mathbb{Z}$ to yield this expression, we would need $H_2$, $H_3$, $H_4$, and $H_5$ to be in the first row. This is impossible, so such a matrix does not exist.
\end{ex}

\section{Skew and skew-II extended Schur functions}\label{section_skew}

\subsection{Skew extended Schur functions}

We define skew extended Schur functions first algebraically and then connect to tableaux combinatorics. For $H \in$ \textit{NSym}, the \emph{perp operator} $H^{\perp}$ acts on elements $F \in QSym$ based on the relation $\langle HG,F \rangle = \langle G,H^{\perp}F\rangle$. This expands as  
\begin{equation}\label{NSym_perp}
    H^{\perp}(F) = \sum_{\alpha}\langle HA_{\alpha},F \rangle B_{\alpha},
\end{equation} for dual bases $\{A_{\alpha}\}_{\alpha}$ of \textit{NSym} and $\{B_{\alpha}\}_{\alpha}$ of $QSym$.

\begin{defn}
For compositions $\alpha$ and $\beta$ with $\beta \subseteq \alpha$, the \emph{skew extended Schur functions} are defined as $$\shin^*_{\alpha/\beta} = \shin_{\beta}^{\perp} (\shin^*_{\alpha}).$$
\end{defn}

By Equation \eqref{NSym_perp}, the skew extended Schur function $\shin^*_{\alpha/\beta}$ expands into various bases as follows. For $\beta \subseteq \alpha$, 
\begin{equation}\label{prop_shin_skew_coeffs}
    \shin^*_{\alpha/\beta}= \sum_{\gamma} \langle \shin_{\beta}H_{\gamma}, \shin^*_{\alpha}  \rangle M_{\gamma} = \sum_{\gamma} \langle \shin_{\beta}R_{\gamma}, \shin^*_{\alpha} \rangle F_{\gamma} = \sum_{\gamma} \langle \shin_{\beta}\shin_{\gamma}, \shin^*_{\alpha} \rangle \shin^*_{\gamma}.
\end{equation}
 The coefficients $\mathcal{C}^{\alpha}_{\beta, \gamma} = \langle \shin_{\beta} \shin_{\gamma}, \shin^*_{\alpha}  \rangle$ are also the coefficients that appear when multiplying shin functions, $$\shin_{\beta} \shin_{\gamma} = \sum_{\alpha} \langle \shin_{\beta} \shin_{\gamma}, \shin^*_{\alpha}  \rangle \shin_{\alpha}.$$ Many of the shin structure coefficients are either 0 or equal to the Littlewood Richardson coefficients as a result of the shin basis' relationship with the Schur functions.

\begin{prop}\label{shin_struct_coeff}
     Let $\alpha, \beta,$ and $\gamma$ be compositions that are not partitions and let $\lambda, \mu,$ and $ \nu$ be partitions. Then, $$\mathcal{C}^{\lambda}_{\beta, \gamma} = \mathcal{C}^{\lambda}_{\beta, \nu} = \mathcal{C}^{\lambda}_{\mu, \gamma}  = 0 \text{\quad and \quad} \mathcal{C}^{\lambda}_{\mu, \nu} = c^{\lambda}_{\mu, \nu},$$ where $c^{\lambda}_{\mu, \nu}$ are the usual Littlewood-Richardson coefficients.
\end{prop}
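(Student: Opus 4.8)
The plan is to push the entire statement through the forgetful map $\chi: \textit{NSym} \to Sym$, using two facts already established in the excerpt: first, that $\chi$ is a Hopf algebra morphism and in particular a ring homomorphism, so $\chi(\shin_{\beta}\shin_{\gamma}) = \chi(\shin_{\beta})\chi(\shin_{\gamma})$; and second, that $\chi(\shin_{\alpha}) = s_{\alpha}$ when $\alpha$ is a partition and $\chi(\shin_{\alpha}) = 0$ otherwise. The key observation is that applying $\chi$ to the product expansion $\shin_{\beta}\shin_{\gamma} = \sum_{\alpha}\mathcal{C}^{\alpha}_{\beta,\gamma}\shin_{\alpha}$ annihilates every non-partition term on the right, isolating exactly the partition-indexed coefficients $\mathcal{C}^{\lambda}_{\beta,\gamma}$ that the proposition concerns.

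First I would apply $\chi$ to both sides of the product expansion. By multiplicativity the left-hand side becomes $\chi(\shin_{\beta})\chi(\shin_{\gamma})$, while on the right every term $\shin_{\alpha}$ with $\alpha$ not a partition dies, so the right-hand side collapses to $\sum_{\lambda \vdash |\beta|+|\gamma|}\mathcal{C}^{\lambda}_{\beta,\gamma}\,s_{\lambda}$, a sum over partitions only. Since the Schur functions $\{s_{\lambda} : \lambda \vdash n\}$ form a basis of $Sym_n$ and are in particular linearly independent, I can read off each $\mathcal{C}^{\lambda}_{\beta,\gamma}$ as the coefficient of $s_{\lambda}$ in the commutative image $\chi(\shin_{\beta})\chi(\shin_{\gamma})$.

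For the three vanishing statements, I would note that each of $\mathcal{C}^{\lambda}_{\beta,\gamma}$, $\mathcal{C}^{\lambda}_{\beta,\nu}$, and $\mathcal{C}^{\lambda}_{\mu,\gamma}$ has at least one non-partition among its lower indices, so the corresponding factor on the left is $\chi(\shin_{(\text{non-partition})}) = 0$ and hence the whole product $\chi(\shin_{\beta})\chi(\shin_{\gamma})$ is zero. Comparing against the collapsed right-hand side and using linear independence of the $s_{\lambda}$ then forces every partition-indexed coefficient to vanish, covering all three cases simultaneously. For the remaining identity, both $\mu$ and $\nu$ are partitions, so the left-hand side equals $s_{\mu}s_{\nu} = \sum_{\lambda}c^{\lambda}_{\mu,\nu}\,s_{\lambda}$; matching coefficients of $s_{\lambda}$ against $\sum_{\lambda}\mathcal{C}^{\lambda}_{\mu,\nu}\,s_{\lambda}$ yields $\mathcal{C}^{\lambda}_{\mu,\nu} = c^{\lambda}_{\mu,\nu}$.

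I do not anticipate a genuine obstacle: the proof is essentially a single application of the ring homomorphism $\chi$ followed by extraction of coefficients. The only point that deserves care is confirming that the image of the expansion is supported purely on partition-indexed Schur functions, and then invoking the linear independence of the Schur basis to legitimize the coefficient-by-coefficient comparison; the rest is routine bookkeeping with the known values of $\chi$ on the shin basis.
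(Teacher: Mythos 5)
Your proposal is correct and follows essentially the same route as the paper's own proof: apply the forgetful map $\chi$ to the expansion $\shin_{\beta}\shin_{\gamma} = \sum_{\alpha}\mathcal{C}^{\alpha}_{\beta,\gamma}\shin_{\alpha}$, use multiplicativity of $\chi$ together with $\chi(\shin_{\lambda}) = s_{\lambda}$ for partitions and $\chi(\shin_{\alpha}) = 0$ otherwise, and compare coefficients against the linearly independent Schur basis. No gaps; the argument matches the paper step for step.
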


\begin{proof} For any compositions $\alpha$ and $\beta$,  $$\shin_{\beta}\shin_{\gamma} = \sum_{\alpha} \mathcal{C}^{\alpha}_{\beta, \gamma} \shin_{\alpha}.$$
    Observe that if either $\beta$ or $\gamma$ is a composition that is not a partition then $$\chi(\shin_{\beta}\shin_{\gamma})=\chi(\shin_{\beta})\chi(\shin_{\gamma}) = 0 = \sum_{\alpha \models |\beta| + |\gamma|} \mathcal{C}^{\alpha}_{\beta, \gamma} \chi(\shin_{\alpha}) = \sum_{\lambda \vdash |\beta| + |\gamma|} \mathcal{C}^{\lambda}_{\beta, \gamma} s_{\lambda}.$$ 
    Thus, $\mathcal{C}^{\lambda}_{\beta, \gamma} = 0$ for all partitions $\lambda$ when either $\beta$ or $\gamma$ is not a partition. If $\beta$ and $\gamma$ are both partitions, which we instead call $\mu$ and $\nu$, we have $$\chi(\shin_{\mu} \shin_{\nu}) = \chi(\shin_{\mu})\chi(\shin_{\nu}) = s_{\mu}s_{\nu} = \sum_{\alpha \models  |\mu| + |\nu| } \mathcal{C}^{\alpha}_{\mu, \nu} \chi(\shin_{\alpha}) = \sum_{\lambda \vdash |\mu| + |\nu|} \mathcal{C}^{\lambda}_{\mu, \nu} s_{\lambda},$$
    and so $\mathcal{C}^{\lambda}_{\mu, \nu} = c^{\lambda}_{\mu, \nu}$, the classic Littlewood-Richardson coefficients, for partitions $\lambda$.
\end{proof}

The skew shin functions are defined combinatorially over skew shin-tableaux as follows.

\begin{prop}\label{skew_into_tableaux}
    For compositions $\alpha$ and $\beta$ such that $\beta \subseteq \alpha$, the coefficient $\langle \shin_{\beta} H_{\gamma}, \shin^*_{\alpha} \rangle$ is equal to the number of skew shin-tableaux of shape $\alpha/\beta$ and type $\gamma$. Moreover, $$\shin^*_{\alpha/\beta} = \sum_T x^T,$$ where the sum runs over skew shin-tableaux $T$ of shape $\alpha/\beta$. 
\end{prop}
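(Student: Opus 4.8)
The plan is to prove the single coefficient identity
$$\langle \shin_{\beta} H_{\gamma}, \shin^*_{\alpha} \rangle = \#\{\text{skew shin-tableaux of shape } \alpha/\beta \text{ and type } \gamma\},$$
from which the displayed formula follows immediately: by Equation \eqref{prop_shin_skew_coeffs} we already know $\shin^*_{\alpha/\beta} = \sum_{\gamma} \langle \shin_{\beta} H_{\gamma}, \shin^*_{\alpha}\rangle M_{\gamma}$, and grouping the monomials $x^T$ of all skew shin-tableaux $T$ of shape $\alpha/\beta$ according to their type $\gamma$ reassembles exactly $\sum_{\gamma}(\#\ldots)\,M_{\gamma} = \sum_T x^T$. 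So everything reduces to counting, and the counting is done by iterating the Pieri rule and then translating to tableaux.

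First I would compute the left-hand side by iterating the right Pieri rule of Definition \ref{shin_r_pieri}. Writing $\gamma = (\gamma_1, \ldots, \gamma_{\ell})$ and $H_{\gamma} = H_{\gamma_1} \cdots H_{\gamma_{\ell}}$, repeated application yields
$$\shin_{\beta} H_{\gamma} = \sum_{\beta = \delta^{(0)} \subset^{\shin}_{\gamma_1} \delta^{(1)} \subset^{\shin}_{\gamma_2} \cdots \subset^{\shin}_{\gamma_{\ell}} \delta^{(\ell)}} \shin_{\delta^{(\ell)}},$$
where the sum ranges over all chains of shin-horizontal strips with successive sizes $\gamma_1, \ldots, \gamma_{\ell}$. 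Because $\{\shin_{\alpha}\}$ and $\{\shin^*_{\alpha}\}$ are dual, the coefficient $\langle \shin_{\beta} H_{\gamma}, \shin^*_{\alpha}\rangle$ is precisely the number of such chains terminating at $\delta^{(\ell)} = \alpha$.

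The heart of the proof is a bijection between these chains and skew shin-tableaux of shape $\alpha/\beta$ and type $\gamma$. From a chain I would produce a filling of $\alpha/\beta$ by labelling each box of $\delta^{(t)} \smallsetminus \delta^{(t-1)}$ with the entry $t$; conversely, from a skew shin-tableau $T$ I would recover the chain by setting $\delta^{(t)}$ equal to $\beta$ together with all boxes of $T$ whose entry is at most $t$. The easy bookkeeping is routine: each strip extends rows only on their right ends, so within a fixed row the added entries weakly increase left to right (condition (i)); step $t$ contributes $\gamma_t$ boxes labelled $t$, so the type is $\gamma$; and the weak increase in rows guarantees that the label-$\le t$ boxes in each row of $T$ form a left-justified initial segment, so that each $\delta^{(t)}$ really is a valid composition diagram in the reverse direction.

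The crux, and the step I expect to be the main obstacle, is matching the overhang clause of the relation $\subset^{\shin}_r$ with the column-strictness (condition (ii)) and the shape restriction (condition (iii)) of skew shin-tableaux. In the forward direction I would argue that when a box is first added to row $i$ at step $t$, so $\delta^{(t-1)}_i = \beta_i < \delta^{(t)}_i$, the overhang rule forces $\delta^{(t)}_j \le \delta^{(t-1)}_i = \beta_i$ for every $j > i$; this single inequality simultaneously prevents two equal entries (and any out-of-order pair) from appearing in one column and forces $\beta_j \le \beta_i$ whenever $\alpha_i > \beta_i$, which is condition (iii). In the reverse direction the very same inequality, now supplied by column-strictness together with condition (iii), is exactly what certifies that each link $\delta^{(t-1)} \subset^{\shin}_{\gamma_t} \delta^{(t)}$ obeys the overhang rule. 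The delicate point is that on composition (rather than partition) shapes a column may be interrupted by removed boxes, so one must verify carefully that conditions (ii) and (iii) together are genuinely equivalent to the overhang clause; this is precisely where the argument departs from the classical skew Pieri correspondence, and getting this equivalence right in both directions is where the real work lies.
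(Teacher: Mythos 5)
Your proposal is correct and follows essentially the same route as the paper: iterate the right Pieri rule to write $\langle \shin_{\beta}H_{\gamma}, \shin^*_{\alpha}\rangle$ as a count of chains $\beta = \beta^{(0)} \subset^{\shin}_{\gamma_1} \cdots \subset^{\shin}_{\gamma_{\ell}} \beta^{(\ell)} = \alpha$, biject these chains with skew shin-tableaux by labelling the boxes of each strip, and observe that the containment condition gives weakly increasing rows while the overhang rule gives column-strictness and condition (iii). Your treatment of the overhang/column-strictness equivalence is in fact slightly more careful than the paper's, which asserts it without spelling out the key inequality.
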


\begin{proof}
Observe that $$\shin_{\beta}H_{\gamma} = \sum_{\beta = \beta^{(0)} \subset^{\shin}_{\gamma_1} \beta^{(1)} \subset^{\shin}_{\gamma_2} \beta^{(2)} \cdots \subset^{\shin}_{\gamma_j} \beta^{(j)} = \alpha} \shin_{\alpha}$$ via repeated applications of the right Pieri rule.  
    Then, $\langle \shin_{\beta} H_{\gamma}, \shin_{\alpha}^* \rangle$ counts the number of unique sequences $(\beta_0, \beta_1, \ldots, \beta_j)$ such that $\beta = \beta^{(0)} \subset^{\shin}_{\gamma_1} \beta^{(1)} \subset^{\shin}_{\gamma_2} \beta^{(2)} \cdots \subset^{\shin}_{\gamma_j} \beta^{(j)} = \alpha$. Each of these sequences can be associated with a unique skew shin-tableau of shape $\alpha/\beta$ with type $\gamma$ by filling with $i's$ the blocks in $\beta^{(i)} / \beta^{(i-1)}$.  The containment condition of the Pieri rule ensures that rows are weakly increasing and the overhang rule ensures that columns are strictly increasing. The overhang rule also ensures that if $\alpha_i> \beta_i$ for any  $1 \leq i \leq k$, then $\beta_j < \beta_i$ for all $j > i$. Thus, our tableau $T$ is indeed a skew shin-tableaux.  It is simple to see that any skew shin-tableau can be expressed by such a sequence, showing that $\langle \shin_{\beta} H_{\gamma}, \shin^*_{\alpha} \rangle$ is equal to the number of skew shin-tableaux of shape $\alpha/\beta$ and type $\gamma$.  
    It now follows from Equation \eqref{prop_shin_skew_coeffs} that $\shin^*_{\alpha/\beta} = \sum_{T} x^T$ where the sum runs over all skew shin-tableaux of shape $\alpha/\beta$.
\end{proof}

\begin{ex} The skew extended Schur function indexed by $(3,4)/(2,1)$ is given by

 $$\scalebox{.85}{
    \begin{ytableau}
        *(gray) & *(gray) & 1 \\
        *(gray) & 1 & 2 & 2  
    \end{ytableau}
    \qquad
    \begin{ytableau}
        *(gray) & *(gray) & 1 \\
        *(gray) & 1 & 2 & 3
    \end{ytableau}
     \qquad
    \begin{ytableau}
        *(gray) & *(gray) & 1 \\
        *(gray) & 2 & 3 & 4
    \end{ytableau}
     \qquad
    \begin{ytableau}
        *(gray) & *(gray) & 2 \\
        *(gray) & 1 & 3 & 4
    \end{ytableau}
     \qquad
    \begin{ytableau}
        *(gray) & *(gray) & 2  \\
        *(gray) & 3 & 3 & 3
    \end{ytableau} } \quad \cdots
   $$

    $$\shin^*_{(3,4)/(2,1)} = x_1^2x_2^2 + x_1^2x_2x_3 + x_1x_2x_3x_4 + x_1x_2x_3x_4 + x_2x_3^3 + \ldots $$
\end{ex}
   
Skew shin-tableaux of shape $\lambda/\mu$ for partitions $\lambda, \mu$ are skew semistandard Young tableaux and thus by Proposition \ref{skew_into_tableaux}, the skew extended Schur functions are equal to the skew Schur functions. That is, for partitions $\lambda$ and $\mu$ where $\mu \subseteq \lambda$, $$\shin^*_{\lambda/\mu} = s_{\lambda/\mu}.$$

\begin{rem}
    It also follows from Proposition \ref{skew_into_tableaux} that $\mathcal{C}^{\alpha}_{\beta, \gamma} = 0$ if there exists some $i$ and $j$ with $i < j$ where $\alpha_i > \beta_i$ but $\beta_j > \beta_i$. In particular, this tells us that many terms that do not appear in  $\shin_{\beta}\shin_{\gamma}$. 
\end{rem}

Standard skew shin-tableaux are also closely related to the poset structure on composition diagrams whose cover relations are determined by the addition of a shin-horizontal-strip of size $1$. 

\begin{defn} Define the \emph{shin poset}, denoted $\mathcal{P}^{\shin}$, as the poset on compositions with the cover relation $\subset^{\shin}_1$. In the Hasse diagram, label an edge between two elements $\alpha$ and $\beta$ with the integer $m$ if $\alpha$ differs from $\beta$ by the addition of a box to row $m$.
\end{defn}
\tikzset{every picture/.style={line width=0.75pt}} 
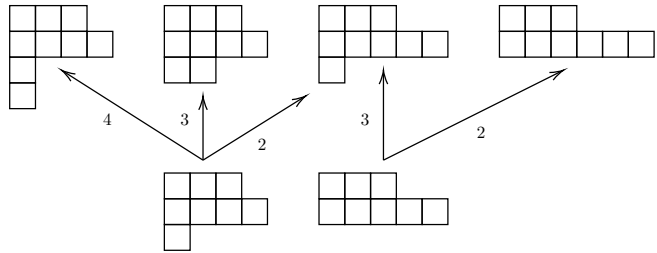
\begin{figure}[h!]
\scalebox{.65}{
\begin{tikzpicture}[x=0.75pt,y=0.75pt,yscale=-1,xscale=1]

\draw   (140,190) -- (160,190) -- (160,210) -- (140,210) -- cycle ;
\draw   (140,170) -- (160,170) -- (160,190) -- (140,190) -- cycle ;
\draw   (160,170) -- (180,170) -- (180,190) -- (160,190) -- cycle ;
\draw   (180,170) -- (200,170) -- (200,190) -- (180,190) -- cycle ;
\draw   (200,170) -- (220,170) -- (220,190) -- (200,190) -- cycle ;
\draw   (140,150) -- (160,150) -- (160,170) -- (140,170) -- cycle ;
\draw   (160,150) -- (180,150) -- (180,170) -- (160,170) -- cycle ;
\draw   (180,150) -- (200,150) -- (200,170) -- (180,170) -- cycle ;
\draw   (260,60) -- (280,60) -- (280,80) -- (260,80) -- cycle ;
\draw   (260,40) -- (280,40) -- (280,60) -- (260,60) -- cycle ;
\draw   (280,40) -- (300,40) -- (300,60) -- (280,60) -- cycle ;
\draw   (300,40) -- (320,40) -- (320,60) -- (300,60) -- cycle ;
\draw   (320,40) -- (340,40) -- (340,60) -- (320,60) -- cycle ;
\draw   (260,20) -- (280,20) -- (280,40) -- (260,40) -- cycle ;
\draw   (280,20) -- (300,20) -- (300,40) -- (280,40) -- cycle ;
\draw   (300,20) -- (320,20) -- (320,40) -- (300,40) -- cycle ;
\draw   (140,60) -- (160,60) -- (160,80) -- (140,80) -- cycle ;
\draw   (140,40) -- (160,40) -- (160,60) -- (140,60) -- cycle ;
\draw   (160,40) -- (180,40) -- (180,60) -- (160,60) -- cycle ;
\draw   (180,40) -- (200,40) -- (200,60) -- (180,60) -- cycle ;
\draw   (200,40) -- (220,40) -- (220,60) -- (200,60) -- cycle ;
\draw   (140,20) -- (160,20) -- (160,40) -- (140,40) -- cycle ;
\draw   (160,20) -- (180,20) -- (180,40) -- (160,40) -- cycle ;
\draw   (180,20) -- (200,20) -- (200,40) -- (180,40) -- cycle ;
\draw   (20,60) -- (40,60) -- (40,80) -- (20,80) -- cycle ;
\draw   (20,40) -- (40,40) -- (40,60) -- (20,60) -- cycle ;
\draw   (40,40) -- (60,40) -- (60,60) -- (40,60) -- cycle ;
\draw   (60,40) -- (80,40) -- (80,60) -- (60,60) -- cycle ;
\draw   (80,40) -- (100,40) -- (100,60) -- (80,60) -- cycle ;
\draw   (20,20) -- (40,20) -- (40,40) -- (20,40) -- cycle ;
\draw   (40,20) -- (60,20) -- (60,40) -- (40,40) -- cycle ;
\draw   (60,20) -- (80,20) -- (80,40) -- (60,40) -- cycle ;
\draw   (340,40) -- (360,40) -- (360,60) -- (340,60) -- cycle ;
\draw   (160,60) -- (180,60) -- (180,80) -- (160,80) -- cycle ;
\draw   (20,80) -- (40,80) -- (40,100) -- (20,100) -- cycle ;
\draw   (260,170) -- (280,170) -- (280,190) -- (260,190) -- cycle ;
\draw   (280,170) -- (300,170) -- (300,190) -- (280,190) -- cycle ;
\draw   (300,170) -- (320,170) -- (320,190) -- (300,190) -- cycle ;
\draw   (320,170) -- (340,170) -- (340,190) -- (320,190) -- cycle ;
\draw   (260,150) -- (280,150) -- (280,170) -- (260,170) -- cycle ;
\draw   (280,150) -- (300,150) -- (300,170) -- (280,170) -- cycle ;
\draw   (300,150) -- (320,150) -- (320,170) -- (300,170) -- cycle ;
\draw   (340,170) -- (360,170) -- (360,190) -- (340,190) -- cycle ;
\draw   (500,40) -- (520,40) -- (520,60) -- (500,60) -- cycle ;
\draw   (400,40) -- (420,40) -- (420,60) -- (400,60) -- cycle ;
\draw   (420,40) -- (440,40) -- (440,60) -- (420,60) -- cycle ;
\draw   (440,40) -- (460,40) -- (460,60) -- (440,60) -- cycle ;
\draw   (460,40) -- (480,40) -- (480,60) -- (460,60) -- cycle ;
\draw   (400,20) -- (420,20) -- (420,40) -- (400,40) -- cycle ;
\draw   (420,20) -- (440,20) -- (440,40) -- (420,40) -- cycle ;
\draw   (440,20) -- (460,20) -- (460,40) -- (440,40) -- cycle ;
\draw   (480,40) -- (500,40) -- (500,60) -- (480,60) -- cycle ;
\draw    (170,140) -- (61.69,71.07) ;
\draw [shift={(60,70)}, rotate = 32.47] [color={rgb, 255:red, 0; green, 0; blue, 0 }  ][line width=0.75]    (10.93,-3.29) .. controls (6.95,-1.4) and (3.31,-0.3) .. (0,0) .. controls (3.31,0.3) and (6.95,1.4) .. (10.93,3.29)   ;
\draw    (170,140) -- (170,92) ;
\draw [shift={(170,90)}, rotate = 90] [color={rgb, 255:red, 0; green, 0; blue, 0 }  ][line width=0.75]    (10.93,-3.29) .. controls (6.95,-1.4) and (3.31,-0.3) .. (0,0) .. controls (3.31,0.3) and (6.95,1.4) .. (10.93,3.29)   ;
\draw    (170,140) -- (248.3,91.06) ;
\draw [shift={(250,90)}, rotate = 147.99] [color={rgb, 255:red, 0; green, 0; blue, 0 }  ][line width=0.75]    (10.93,-3.29) .. controls (6.95,-1.4) and (3.31,-0.3) .. (0,0) .. controls (3.31,0.3) and (6.95,1.4) .. (10.93,3.29)   ;
\draw    (310,140) -- (310,72) ;
\draw [shift={(310,70)}, rotate = 90] [color={rgb, 255:red, 0; green, 0; blue, 0 }  ][line width=0.75]    (10.93,-3.29) .. controls (6.95,-1.4) and (3.31,-0.3) .. (0,0) .. controls (3.31,0.3) and (6.95,1.4) .. (10.93,3.29)   ;
\draw    (310,140) -- (448.21,70.89) ;
\draw [shift={(450,70)}, rotate = 153.43] [color={rgb, 255:red, 0; green, 0; blue, 0 }  ][line width=0.75]    (10.93,-3.29) .. controls (6.95,-1.4) and (3.31,-0.3) .. (0,0) .. controls (3.31,0.3) and (6.95,1.4) .. (10.93,3.29)   ;

\draw (91,102.4) node [anchor=north west][inner sep=0.75pt]    {$4$};
\draw (151,102.4) node [anchor=north west][inner sep=0.75pt]    {$3$};
\draw (211,122.4) node [anchor=north west][inner sep=0.75pt]    {$2$};
\draw (291,102.4) node [anchor=north west][inner sep=0.75pt]    {$3$};
\draw (381,112.4) node [anchor=north west][inner sep=0.75pt]    {$2$};

\end{tikzpicture}
}
\caption{A section of the shin poset $\mathcal{P}^{\shin}$.}
\end{figure} 

\noindent That is to say, $\beta$ covers $\alpha$ in the shin poset if $\beta$ differs from $\alpha$ by the addition of a shin-horizontal strip of size 1 (a single box).  Note that the subposet of $\mathcal{P}^{\shin}$ induced on partitions corresponds to Young's Lattice. Further, we generally visualize the elements of this poset as the diagrams associated with each composition.

Maximal chains from $\beta$ to $\alpha$ in $\mathcal{P}^{\shin}$ can be associated with a skew standard shin-tableau of shape $\alpha/\beta$. The chain $C = \{\beta = \beta^{(0)} \xrightarrow{m_1} \beta^{(1)} \xrightarrow{m_2} \cdots \xrightarrow{m_k} \beta^{(k)} = \alpha \}$ is associated with the skew standard shin-tableau $T$ of shape $\alpha/\beta$ where the boxes are filled with the integers $1$ through $k$ in the order they are added on the chain.  Thus, the box added from $\beta^{(j)} \xrightarrow{m_{j+1}} \beta^{(j+1)}$ is filled with $j+1$. If $\beta = \emptyset$ then the chain $C$ is associated with a standard shin-tableau that is not skew.

\begin{ex}
    The maximal chain $C = \{ (2,1) \xrightarrow{1} (3,1) \xrightarrow{3} (3,1,1) \xrightarrow{2} (3,2,1) \xrightarrow{2} (3,3,1) \xrightarrow{2} (3,4,1)\}$ is associated with the skew standard shin-tableau:\vspace{2mm}
    \center{\scalebox{.85}{
    \begin{ytableau}
        *(gray) & *(gray) & 1 \\
        *(gray) & 3 & 4 & 5 \\
        2
    \end{ytableau}}}\vspace{2mm}
\end{ex}

As previously mentioned, $QSym$ admits a Hopf algebra structure \cite{doliwa21}. This means it has an operation called \emph{comultiplication} which we can now express for extended Schur functions.

\begin{prop} For a composition $\alpha$, $$\Delta \shin^*_{\alpha} = \sum_{\beta \subseteq \alpha} \shin^*_{\beta} \otimes \shin^*_{\alpha/\beta},$$ where the sum runs over compositions $\beta$.
\end{prop}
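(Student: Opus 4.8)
The plan is to prove the identity by duality, exploiting that comultiplication in $QSym$ is adjoint to multiplication in \textit{NSym} under the pairing $\langle \cdot, \cdot \rangle$. Concretely, for all $g, h \in$ \textit{NSym} and $f \in QSym$ one has $\langle g \otimes h, \Delta f \rangle = \langle gh, f \rangle$, where the pairing of \textit{NSym} $\otimes$ \textit{NSym} against $QSym \otimes QSym$ is the product of the pairings on each factor. Since $\{\shin_\beta \otimes \shin_\gamma\}_{\beta, \gamma}$ is a basis of \textit{NSym} $\otimes$ \textit{NSym} and this pairing is nondegenerate, it suffices to check that the two sides of the claimed identity pair equally against every $\shin_\beta \otimes \shin_\gamma$.

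First I would compute the left-hand pairing. By the adjunction above, $\langle \shin_\beta \otimes \shin_\gamma, \Delta \shin^*_\alpha \rangle = \langle \shin_\beta \shin_\gamma, \shin^*_\alpha \rangle = \mathcal{C}^\alpha_{\beta, \gamma}$, the shin structure coefficient. Next I would compute the right-hand pairing: writing the right side as $\sum_{\epsilon \subseteq \alpha} \shin^*_\epsilon \otimes \shin^*_{\alpha/\epsilon}$ and pairing against $\shin_\beta \otimes \shin_\gamma$ gives $\sum_{\epsilon \subseteq \alpha} \langle \shin_\beta, \shin^*_\epsilon \rangle \langle \shin_\gamma, \shin^*_{\alpha/\epsilon} \rangle$. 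Because the shin and extended Schur bases are dual, $\langle \shin_\beta, \shin^*_\epsilon \rangle = \delta_{\beta, \epsilon}$, so the sum collapses to the single term $\epsilon = \beta$, which appears precisely when $\beta \subseteq \alpha$; this leaves $\langle \shin_\gamma, \shin^*_{\alpha/\beta} \rangle$. Unwinding the definition $\shin^*_{\alpha/\beta} = \shin_\beta^\perp(\shin^*_\alpha)$ together with the defining relation of the perp operator then gives $\langle \shin_\gamma, \shin_\beta^\perp(\shin^*_\alpha) \rangle = \langle \shin_\beta \shin_\gamma, \shin^*_\alpha \rangle = \mathcal{C}^\alpha_{\beta, \gamma}$. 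Hence the two sides agree whenever $\beta \subseteq \alpha$.

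The one delicate point, which I expect to be the main obstacle, is the case $\beta \not\subseteq \alpha$: there the term $\epsilon = \beta$ is absent from the restricted sum, so the right-hand pairing is $0$, whereas the left-hand pairing is $\mathcal{C}^\alpha_{\beta, \gamma}$. To reconcile the two I would show that $\mathcal{C}^\alpha_{\beta, \gamma} = 0$ unless $\beta \subseteq \alpha$. This follows from the right Pieri rule of Definition \ref{shin_r_pieri}: expanding $\shin_\gamma$ in the complete homogeneous basis and applying the rule repeatedly, every $\shin_\alpha$ occurring in $\shin_\beta \shin_\gamma$ arises from $\shin_\beta$ by successively adjoining boxes, so $\alpha_i \geq \beta_i$ for all $i$ and $\ell(\alpha) \geq \ell(\beta)$, that is $\beta \subseteq \alpha$. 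Thus the left-hand pairing also vanishes when $\beta \not\subseteq \alpha$, the two sides agree against every basis element $\shin_\beta \otimes \shin_\gamma$, and nondegeneracy of the pairing yields the comultiplication formula.
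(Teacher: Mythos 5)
Your proof is correct and follows essentially the same route as the paper: both rest on the duality between multiplication of shin functions in \textit{NSym} and comultiplication of extended Schur functions in $QSym$ together with Equation \eqref{prop_shin_skew_coeffs}, with your version phrased as checking pairings against the basis $\shin_{\beta} \otimes \shin_{\gamma}$ rather than expanding $\Delta(\shin^*_{\alpha})$ directly in the tensor basis and regrouping. Your explicit verification via the right Pieri rule that $\mathcal{C}^{\alpha}_{\beta,\gamma}=0$ unless $\beta \subseteq \alpha$ is a detail the paper leaves implicit, and it is exactly what justifies restricting the sum to $\beta \subseteq \alpha$.
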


\begin{proof}
    The product of the shin functions uniquely defines the coproduct of the extended Schur functions due to Hopf algebra properties \cite{grinberg}. Specifically, 
    \begin{align*}
        \Delta(\shin_{\alpha}^*) &= \sum_{\beta, \gamma} \langle \shin_{\beta} \shin_{\gamma}, \shin^*_{\alpha} \rangle \shin_{\beta}^* \otimes \shin_{\gamma}^*  = \sum_{\beta} \left( \shin_{\beta}^* \otimes  \sum_{\gamma} \langle \shin_{\beta} \shin_{\gamma}, \shin^*_{\alpha} \rangle \shin^*_{\gamma} \right)\\
        &= \sum_{\beta} \shin^*_{\beta} \otimes \shin^*_{\alpha/\beta} \text{\quad by Equation \eqref{prop_shin_skew_coeffs}.}\qedhere
    \end{align*}
\end{proof}

\subsection{Skew-II extended Schur functions}

Because \textit{NSym} is noncommutative, there are two reasonable ways to define `skew' functions in $QSym$.  For our second type of skew function, we introduce a variation on the perp operator of Equation \eqref{NSym_perp}.

\begin{defn}\label{NSym_rperp}
For $H \in$ \textit{NSym}, the \emph{right-perp operator} $H^{\rperp}$ acts on elements $F \in QSym$ based on the relation $\langle GH,F \rangle = \langle G,H^{\rperp}F\rangle$. This expands as  $$H^{\rperp}(F) = \sum_{\alpha}\langle A_{\alpha}H,F \rangle B_{\alpha},$$ for dual bases $\{A_{\alpha}\}_{\alpha}$ of \textit{NSym} and $\{B_{\alpha}\}_{\alpha}$ of $QSym$.
\end{defn}

The usual perp operator $H^{\perp}$ of $QSym$ is dual to \emph{left} multiplication by $H$ in \textit{NSym}, whereas the right-perp operator $H^{\rperp}$ of $QSym$ is dual to \emph{right} multiplication by $H$ in \textit{NSym}. Because $Sym$ is commutative and self-dual, the perp and right-perp operators for $Sym$ are equivalent.

\begin{rem} In \cite{Nie2022pieri} and \cite{Tew}, left and right actions of \textit{NSym} on $QSym$ are used in place of the perp operators. Our $\shin_{\beta}^{\rperp}(\shin^*_{\alpha})$ is equivalent to their left action of \textit{NSym} on $QSym$. 
\end{rem}

Before introducing our alternative skew functions, we define the objects that serve as their indices.

\begin{defn} For compositions $\alpha = (\alpha_1, \ldots, \alpha_k)$ and $\beta = (\beta_1, \ldots, \beta_j)$ such that $\beta^r \subseteq \alpha^r$, the \emph{\otherskew\  shape} $\alpha \rskew \beta$ is the composition diagram of $\alpha$ where the left-most $\beta_i$ boxes are removed from row $\alpha_{k+1-i}$ for $1 \leq i \leq j$. This removal is often represented by shading the boxes in.
\end{defn}

\begin{ex} The following diagram is the \otherskew\ shape $(3,4,4)\rskew(2,1)$.

$$\scalebox{.85}{ 
\begin{ytableau}
     \ & \ & \  \\
     *(gray) & *(gray) & \ & \ \\
     *(gray) & \  & \ & \ 
\end{ytableau}}
$$
\end{ex}

This notation and concept come from \cite{Luoto2013}, but we use a different name here for clarity and continuity within this paper. Now, we define functions using the skew-II shapes.

\begin{defn}
    For compositions $\alpha$ and $\beta$, the \emph{skew-II extended Schur function} is defined as $$ \shin^*_{\alpha\rskew\beta} = \shin^{\rperp}_{\beta}(\shin^*_{\alpha}).$$
\end{defn}

By Definition \ref{NSym_rperp}, $\shin^*_{\alpha \rskew \beta}$ expands into various bases as follows. For compositions $\alpha$ and $\beta$, 

\begin{equation}\label{skewii_eschur_exp}
\shin^*_{\alpha\rskew\beta} = \sum_{\gamma} \langle H_{\gamma} \shin_{\beta}, \shin^*_{\alpha} \rangle M_{\gamma}  = \sum_{\gamma} \langle R_{\gamma} \shin_{\beta}, \shin^*_{\alpha} \rangle F_{\gamma}  = \sum_{\gamma} \langle \shin_{\gamma} \shin_{\beta}, \shin^*_{\alpha} \rangle \shin^*_{\gamma}.     
\end{equation}

In terms of the shin structure coefficients, we have $\shin^*_{\alpha\rskew\beta} = \sum_{\gamma} \mathcal{C}^{\alpha}_{\gamma, \beta} \shin^*_{\gamma}$. 

\begin{rem}
    According to calculations done in Sagemath \cite{sagemath}, the skew-II extended Schur function $\shin^*_{\alpha \rskew \beta}$ does not expand positively into the monomial basis. For example, $\shin^*_{(2,1,3) \rskew (1,2,1)}$ has the term $- M_{(1,1)}$ in its expansion. Thus, these functions cannot be expressed as positive sums of a skew-II shin-tableaux.
\end{rem}

The comultiplication of the extended Schur basis can also be expressed in terms of the \otherskew\ extended Schur functions. Note that a similar formula could be given for any basis using analogous skew-II functions defined via the perp operator.

\begin{prop}
    For a composition $\alpha$, $$\Delta(\shin^*_{\alpha}) = \sum_{\beta} \shin^*_{\alpha \rskew \beta} \otimes \shin^*_{\beta},$$ where the sum runs over compositions $\beta$ where $\beta^r \subseteq \alpha^r$.
\end{prop}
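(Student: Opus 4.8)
The plan is to mimic the proof of the earlier comultiplication formula, but to collect the duality expansion of $\Delta(\shin^*_\alpha)$ by its \emph{second} tensor factor rather than its first. The starting point is the standard Hopf-duality identity: since $\{\shin_\gamma\}_\gamma$ and $\{\shin^*_\gamma\}_\gamma$ are dual bases of the dually paired Hopf algebras \textit{NSym} and $QSym$, the product structure constants of the shin basis are exactly the coproduct structure constants of the extended Schur basis, so
$$\Delta(\shin^*_\alpha) = \sum_{\gamma, \beta} \langle \shin_\gamma \shin_\beta, \shin^*_\alpha \rangle\, \shin^*_\gamma \otimes \shin^*_\beta = \sum_{\gamma, \beta} \mathcal{C}^\alpha_{\gamma, \beta}\, \shin^*_\gamma \otimes \shin^*_\beta.$$
This is the very same expansion used to establish the first comultiplication formula; there the terms were gathered by fixing $\gamma$ (the left factor), whereas here I gather them by fixing $\beta$ (the right factor), which is the factor that right multiplication by $\shin_\beta$ controls.

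Next I would fix $\beta$ and pull it out of the second tensor slot,
$$\Delta(\shin^*_\alpha) = \sum_\beta \Big( \sum_\gamma \mathcal{C}^\alpha_{\gamma, \beta}\, \shin^*_\gamma \Big) \otimes \shin^*_\beta,$$
and then recognize the inner sum. By the rightmost equality of Equation \eqref{skewii_eschur_exp}, $\sum_\gamma \langle \shin_\gamma \shin_\beta, \shin^*_\alpha \rangle\, \shin^*_\gamma = \shin^*_{\alpha \rskew \beta}$, and substituting this in yields
$$\Delta(\shin^*_\alpha) = \sum_\beta \shin^*_{\alpha \rskew \beta} \otimes \shin^*_\beta,$$
which is the claimed identity. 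Everything up to this point is a formal rearrangement of a single finite sum, parallel to (and no harder than) the proof of the skew comultiplication formula.

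The only point requiring care is the index set of the outer sum. A priori it runs over all compositions $\beta$, and I must verify that it collapses to those with $\beta^r \subseteq \alpha^r$. This reduces to checking that $\shin^*_{\alpha \rskew \beta}$ vanishes whenever $\beta^r \not\subseteq \alpha^r$, equivalently that $\mathcal{C}^\alpha_{\gamma, \beta} = \langle \shin_\gamma \shin_\beta, \shin^*_\alpha \rangle = 0$ for every $\gamma$ in that case. I expect this to be the main, though minor, obstacle: unlike the ordinary skew functions, the skew-II functions admit no positive tableau model (see the preceding remark), so the vanishing should be argued algebraically rather than combinatorially, for instance by tracking how right multiplication by $\shin_\beta$ can build the diagram of $\alpha$ up from its bottom-left corner, or simply by adopting the convention that $\alpha \rskew \beta$ is undefined and the corresponding function is $0$ unless $\beta^r \subseteq \alpha^r$ — exactly as the analogous range condition $\beta \subseteq \alpha$ was handled in the first comultiplication formula.
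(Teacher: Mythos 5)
Your proposal is correct and is essentially identical to the paper's own proof: the paper likewise expands $\Delta(\shin^*_{\alpha})$ via the duality between the product structure constants of the shin basis and the coproduct structure constants of the extended Schur basis, groups by the second tensor factor, and identifies the inner sum with $\shin^*_{\alpha \rskew \beta}$ using Equation \eqref{skewii_eschur_exp}. Your closing concern about why the sum collapses to $\beta$ with $\beta^r \subseteq \alpha^r$ is a fair point of care that the paper does not address at all (its proof simply says the sum runs over all compositions $\beta$), but it does not change the substance of the argument.
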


\begin{proof} For a composition $\alpha$,
\begin{align*} 
        \Delta(\shin_{\alpha}^*) &= \sum_{\beta, \gamma} \langle \shin_{\gamma} \shin_{\beta}, \shin^*_{\alpha} \rangle \shin_{\gamma}^* \otimes \shin_{\beta}^*  = \sum_{\beta} \left( \sum_{\gamma} \langle \shin_{\gamma} \shin_{\beta}, \shin^*_{\alpha} \rangle \shin^*_{\gamma} \right) \otimes \shin^*_{\beta}= \sum_{\beta} \shin^*_{\alpha\rskew \beta} \otimes \shin^*_{\beta}. \qedhere
    \end{align*}
    where the sum runs over compositions $\beta$.
\end{proof}

\section{Involutions on $QSym$ and \textit{NSym}}\label{section_invol}

We consider three involutions in $QSym$ defined on the fundamental basis that correspond with three involutions in \textit{NSym} defined on the ribbon basis \cite{Luoto2013}. All six maps are defined as extensions of the involutions given by the complement, reverse, and transpose operations on compositions. 
 

 \begin{defn}
     The involutions $\psi$, $\rho$, and $\omega$ on $QSym$ and \textit{NSym} are defined as
      $$ \psi(F_{\alpha}) = F_{\alpha^c} \quad \quad \rho(F_{\alpha}) = F_{\alpha^r} \quad \quad \omega(F_{\alpha})=F_{\alpha^t},$$ $$\psi(R_{\alpha}) = R_{\alpha^c}
 \quad \quad \rho(R_{\alpha}) = R_{\alpha^r} \quad \quad  \omega(R_{\alpha})=R_{\alpha^t},$$ extended linearly. All three maps on $QSym$ and $\psi$ on \textit{NSym} are automorphisms, while $\rho$ and $\omega$ on \textit{NSym} are anti-automorphisms.
 \end{defn}

 Note that we use the same notation for the corresponding involutions on $QSym$ and \textit{NSym}.  These maps commute, and $\omega = \rho \circ \psi = \psi \circ \rho$. When $\omega$ and $\psi$ are restricted to $Sym$, they are both equivalent to the classical involution $\omega: Sym \rightarrow Sym$ which acts on the Schur functions by $\omega(s_{\lambda}) = \lambda'$ where $\lambda'$ is the conjugate of $\lambda$. Additionally, Jia, Wang, and Yu prove in \cite{jia2019rigidity} that $\psi$, $\rho$, and $\omega$ are in fact the only nontrivial graded algebra automorphisms on $QSym$ that preserve the fundamental basis. Further, $\psi$ is the only nontrivial graded Hopf algebra automorphism on $QSym$ that preserves the fundamental basis.

We introduce two new pairs of dual bases ($\fshin$, $\fshin^*$ and $\bshin$, $\bshin^*$) in $QSym$ and \textit{NSym} by applying $\rho$ and $\omega$ to the extended Schur and shin functions. Applying $\psi$ to the extended Schur and shin functions recovers the row-strict shin and row-strict extended Schur functions ($\rshin$, $\rshin^*$) of Niese, Sundaram, van Willigenburg, Vega, and Wang from \cite{row0hecke}. Specifically, for a composition $\alpha$, we have
$$\psi(\shin^*_{\alpha}) = \rshin^*_{\alpha} \quad \quad \rho(\shin^*_{\alpha}) = \fshin^*_{\alpha^r} \quad \quad \omega(\shin^*_{\alpha}) = \bshin^*_{\alpha^r},$$
$$\psi(\shin_{\alpha}) = \rshin_{\alpha} \quad \quad \rho(\shin_{\alpha}) = \fshin_{\alpha^r} \quad \quad \omega(\shin_{\alpha}) = \bshin_{\alpha^r}.$$

We then give combinatorial interpretations of these 2 new pairs of bases in terms of variations on shin-tableaux. Recall that shin-tableaux have weakly increasing columns and strictly increasing rows.  Intuitively, the $\psi$ map switches whether the strictly changing condition is on rows or columns (the other is allowed to change weakly).  The $\rho$ map switches the row condition from increasing to decreasing or vice versa. The $\omega$ map does both.  Table \ref{table_shin} summarizes the tableaux defined throughout this section with the position of $i+1$ relative to $i$ that makes $i$ a descent, the order of the reading word (Left, Right, Top, Bottom), the condition on entries of each row, and the condition on entries in each column.
\begin{center}
\begin{table}[h!]
\begin{tabular}{ |c|c|c|c|c| }
 \hline
   & \textbf{Descent} & \textbf{Reading Word} 
 & \textbf{Rows} & \textbf{Columns}\\ 
 \hline
 \textbf{Shin} & strictly below & L to R, B to T & weakly increasing & strictly increasing \\ 
 \hline
 \textbf{Row-strict}  & weakly above & L to R, T to B & strictly increasing & weakly increasing \\ 
 \hline
 \textbf{Reverse} & strictly below & R to L, B to T & weakly decreasing  & strictly increasing \\ 
 \hline
 \textbf{Row-strict reverse} & weakly above & R to L, T to B & strictly decreasing & weakly increasing\\ 
 \hline
\end{tabular}
\caption{Variations on shin tableaux.}\label{table_shin}
\end{table}
\end{center}

Through this combinatorial interpretation, each of the four pairs of dual bases is related to any other by one of the three involutions $\psi$, $\rho$, or $\omega$ as shown in Figure \ref{fig:involutions}. Recall that the involution $\psi$, $\rho$, and $\omega$ in $QSym$ and \textit{NSym} collectively serve as the analogue to $\omega$ in $Sym$. In $Sym$, the Schur basis is its own image under $\omega$ but in $QSym$ and \textit{NSym} our Schur-like bases are instead part of a system of 4 related bases that is closed with respect to $\psi, \rho$ and $\omega$.

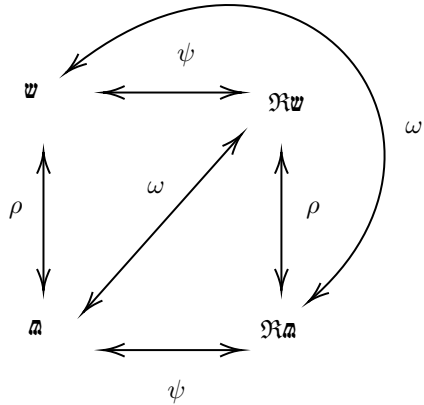
\begin{figure}[h!]\label{involution_diagram}
    \centering
\tikzset{every picture/.style={line width=0.75pt}} 
\qquad \quad \begin{tikzpicture}[x=0.75pt,y=0.75pt,yscale=-1,xscale=1]

\draw    (25,81) -- (25,147) ;
\draw [shift={(25,149)}, rotate = 270] [color={rgb, 255:red, 0; green, 0; blue, 0 }  ][line width=0.75]    (10.93,-3.29) .. controls (6.95,-1.4) and (3.31,-0.3) .. (0,0) .. controls (3.31,0.3) and (6.95,1.4) .. (10.93,3.29)   ;
\draw [shift={(25,79)}, rotate = 90] [color={rgb, 255:red, 0; green, 0; blue, 0 }  ][line width=0.75]    (10.93,-3.29) .. controls (6.95,-1.4) and (3.31,-0.3) .. (0,0) .. controls (3.31,0.3) and (6.95,1.4) .. (10.93,3.29)   ;
\draw    (38.08,37.56) .. controls (153.12,-56.21) and (255.13,73.12) .. (158.5,154.71) ;
\draw [shift={(158.5,154.71)}, rotate = 319.77] [color={rgb, 255:red, 0; green, 0; blue, 0 }  ][line width=0.75]    (10.93,-3.29) .. controls (6.95,-1.4) and (3.31,-0.3) .. (0,0) .. controls (3.31,0.3) and (6.95,1.4) .. (10.93,3.29)   ;
\draw [shift={(36.33,39)}, rotate = 320.18] [color={rgb, 255:red, 0; green, 0; blue, 0 }  ][line width=0.75]    (10.93,-3.29) .. controls (6.95,-1.4) and (3.31,-0.3) .. (0,0) .. controls (3.31,0.3) and (6.95,1.4) .. (10.93,3.29)   ;
\draw    (57,49) -- (123,49) ;
\draw [shift={(125,49)}, rotate = 180] [color={rgb, 255:red, 0; green, 0; blue, 0 }  ][line width=0.75]    (10.93,-3.29) .. controls (6.95,-1.4) and (3.31,-0.3) .. (0,0) .. controls (3.31,0.3) and (6.95,1.4) .. (10.93,3.29)   ;
\draw [shift={(55,49)}, rotate = 0] [color={rgb, 255:red, 0; green, 0; blue, 0 }  ][line width=0.75]    (10.93,-3.29) .. controls (6.95,-1.4) and (3.31,-0.3) .. (0,0) .. controls (3.31,0.3) and (6.95,1.4) .. (10.93,3.29)   ;
\draw    (145,81) -- (145,147) ;
\draw [shift={(145,149)}, rotate = 270] [color={rgb, 255:red, 0; green, 0; blue, 0 }  ][line width=0.75]    (10.93,-3.29) .. controls (6.95,-1.4) and (3.31,-0.3) .. (0,0) .. controls (3.31,0.3) and (6.95,1.4) .. (10.93,3.29)   ;
\draw [shift={(145,79)}, rotate = 90] [color={rgb, 255:red, 0; green, 0; blue, 0 }  ][line width=0.75]    (10.93,-3.29) .. controls (6.95,-1.4) and (3.31,-0.3) .. (0,0) .. controls (3.31,0.3) and (6.95,1.4) .. (10.93,3.29)   ;
\draw    (57,179) -- (123,179) ;
\draw [shift={(125,179)}, rotate = 180] [color={rgb, 255:red, 0; green, 0; blue, 0 }  ][line width=0.75]    (10.93,-3.29) .. controls (6.95,-1.4) and (3.31,-0.3) .. (0,0) .. controls (3.31,0.3) and (6.95,1.4) .. (10.93,3.29)   ;
\draw [shift={(55,179)}, rotate = 0] [color={rgb, 255:red, 0; green, 0; blue, 0 }  ][line width=0.75]    (10.93,-3.29) .. controls (6.95,-1.4) and (3.31,-0.3) .. (0,0) .. controls (3.31,0.3) and (6.95,1.4) .. (10.93,3.29)   ;
\draw    (123.67,71.49) -- (46.33,158.51) ;
\draw [shift={(45,160)}, rotate = 311.63] [color={rgb, 255:red, 0; green, 0; blue, 0 }  ][line width=0.75]    (10.93,-3.29) .. controls (6.95,-1.4) and (3.31,-0.3) .. (0,0) .. controls (3.31,0.3) and (6.95,1.4) .. (10.93,3.29)   ;
\draw [shift={(125,70)}, rotate = 131.63] [color={rgb, 255:red, 0; green, 0; blue, 0 }  ][line width=0.75]    (10.93,-3.29) .. controls (6.95,-1.4) and (3.31,-0.3) .. (0,0) .. controls (3.31,0.3) and (6.95,1.4) .. (10.93,3.29)   ;

\draw (137,47.4) node [anchor=north west][inner sep=0.75pt]  {$\rshin$};
\draw (16,162.4) node [anchor=north west][inner sep=0.75pt]   {$\fshin$};
\draw (133,162.4) node [anchor=north west][inner sep=0.75pt]  {$\bshin$};
\draw (14,42.4) node [anchor=north west][inner sep=0.75pt]  {$\shin$};
\draw (91,22.4) node [anchor=north west][inner sep=0.75pt]    {$\psi$};
\draw (86,192.4) node [anchor=north west][inner sep=0.75pt]    {$\psi$};
\draw (76,92.4) node [anchor=north west][inner sep=0.75pt]    {$\omega$};
\draw (206,62.4) node [anchor=north west][inner sep=0.75pt]    {$\omega$};
\draw (6,102.4) node [anchor=north west][inner sep=0.75pt]    {$\rho$};
\draw (156,102.4) node [anchor=north west][inner sep=0.75pt]    {$\rho$};

\end{tikzpicture}

    \caption{Mappings between shin variants in \textit{NSym}.}
    \label{fig:involutions}
\end{figure}

First, we review the row-strict extended Schur and shin functions and present a few new results. Then, we introduce the reverse extended Schur basis and row-strict reverse extended Schur basis in $QSym$, as well as their dual bases in \textit{NSym}. Using the three involutions, we translate many results from the shin and extended Schur functions to our new bases, including the Jacobi-Trudi rule and two different types of skew functions. We also note connections between the row-strict reverse extended Schur basis and the antipode of $QSym$ on the extended Schur functions, as well as the dual connection in \textit{NSym}. 

\subsection{Row-strict extended Schur and shin functions}

The row-strict extended Schur and row-strict shin bases were introduced by Niese, Sundaram, van Willigenburg, Vega, and Wang in \cite{row0hecke}, motivated by the representation theory of $0$-Hecke modules. Let $\alpha$ be a composition and let $\beta$ be a weak composition.  A \emph{row-strict shin-tableau} (RSST) of shape $\alpha$ and type $\beta$ is a filling of the composition diagram of $\alpha$ with positive integers such that each row strictly increases from left to right, each column weakly increases from top to bottom, and each integer $i$ appears $\beta_i$ times. A \emph{standard} row-strict shin-tableau (SRSST) with $n$ boxes is one containing the entries $1$ through $n$ each exactly once. 

\begin{defn}
For a composition $\alpha$, define the \emph{row-strict extended Schur function} as $$\rshin^*_{\alpha} = \sum_T x^T,$$ where the sum runs over all row-strict shin-tableaux $T$ of shape $\alpha$. The \emph{row-strict shin functions} $\rshin_{\alpha}$ are defined as the duals in \textit{NSym} to the row-strict extended Schur functions in $QSym$.
\end{defn}

The \emph{descent} set is defined to be $Des_{\rshin}(U) = \{ i : \text{$i+1$ is weakly above $i$ in $U$}\}$ for a standard row-strict shin-tableau $U$. Each entry $i$ in $Des_{\rshin}(U)$ is called a \emph{descent} of $U$. The \emph{descent composition} of $U$ is defined to be  $co_{\rshin}(U) = (i_1, i_2 - i_1, \ldots, i_{d}-i_{d-1}, n-i_{d})$ for $Des_{\rshin}(U) = \{i_1, \ldots, i_d\}$. Note that the set of standard row-strict shin-tableaux is exactly the same as the set of standard shin-tableaux.

Using the framework of standard row-strict shin-tableaux, it is shown in \cite{row0hecke} that for a composition $\alpha$, the row-strict extended Schur function expands into the fundamental basis as 
\begin{equation}\label{rshin*_to_F}
    \rshin^*_{\alpha} = \sum_{U} F_{co_{\rshin}(U)},
\end{equation} where the sum runs over all standard row-strict shin-tableaux.

\begin{ex} The $F$-expansion of the row-strict extended Schur function $\rshin^*_{(2,3)}$ and the standard row-strict shin-tableaux of shape $(2,3)$ are:
$$\rshin^*_{(2,3)} = F_{(1,2,1,1)} + F_{(2,2,1)} \qquad \qquad \scalebox{.75}{
    \begin{ytableau}
         1 & 2 \\
         3 & 4 & 5 
    \end{ytableau}
    \qquad 
    \begin{ytableau}
         1 & 3 \\
         2 & 4 & 5 
    \end{ytableau}}$$
\end{ex}

The extended Schur and row-strict extended Schur functions are related by $\psi$, most easily seen via their $F$-expansions.  This relationship follows from the fact that the set of standard tableaux is the same but the definitions of descent sets are in a sense complementary and the map $\psi$ extends the complement map on compositions.

\begin{prop}\label{psi_on_eschur} \cite{row0hecke}
    For a composition $\alpha$,  $$\psi(\shin^*_{\alpha}) = \rshin^*_{\alpha} \text{\quad and \quad} \psi(\shin_{\alpha}) = \rshin_{\alpha}.$$ Moreover,  $\{\rshin^*_{\alpha}\}_{\alpha}$ is a basis of $QSym$ 
 and $\{\rshin_{\alpha}\}_{\alpha}$ is a basis of \textit{NSym}.
\end{prop}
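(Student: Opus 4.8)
The plan is to prove the identity $\psi(\shin^*_{\alpha}) = \rshin^*_{\alpha}$ directly via the fundamental expansions of both sides, and then obtain the \textit{NSym} statement and the basis claims by duality. First I would recall that the definition of $\psi$ on $QSym$ is $\psi(F_{\beta}) = F_{\beta^c}$, so applying $\psi$ to the $F$-expansion of $\shin^*_{\alpha}$ from Equation (3) gives
$$\psi(\shin^*_{\alpha}) = \sum_{U} F_{(co_{\shin}(U))^c},$$
where the sum runs over standard shin-tableaux $U$ of shape $\alpha$. Comparing with the target expansion of $\rshin^*_{\alpha}$ in Equation \eqref{rshin*_to_F}, it suffices to establish a descent-set correspondence: since the set of standard shin-tableaux and the set of standard row-strict shin-tableaux of shape $\alpha$ are literally the same set of fillings (as noted in the excerpt), I would show that for each such standard tableau $U$, the row-strict descent composition is the complement of the shin descent composition, i.e. $co_{\rshin}(U) = (co_{\shin}(U))^c$.

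The heart of the argument is this descent identity, which I expect to be the main (though not difficult) obstacle. The key observation is that the two descent definitions are genuinely complementary: $i \in Des_{\shin}(U)$ iff $i+1$ lies strictly below $i$, while $i \in Des_{\rshin}(U)$ iff $i+1$ lies weakly above $i$. For a standard filling, every consecutive pair $i, i+1$ occupies distinct cells, so exactly one of ``$i+1$ strictly below $i$'' and ``$i+1$ weakly above $i$'' holds for each $i \in \{1, \ldots, n-1\}$. Hence $Des_{\rshin}(U) = [n-1] \setminus Des_{\shin}(U)$, which is precisely the set-complement. Since the complement operation on compositions corresponds under $comp(\cdot)/set(\cdot)$ exactly to set-complementation of descent sets in $[n-1]$, this yields $co_{\rshin}(U) = (co_{\shin}(U))^c$. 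Substituting this back gives $\psi(\shin^*_{\alpha}) = \sum_U F_{co_{\rshin}(U)} = \rshin^*_{\alpha}$ term by term.

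For the \textit{NSym} statement, I would use that $\psi$ on \textit{NSym} is the adjoint automorphism: since $\langle R_{\alpha}, F_{\beta} \rangle = \delta_{\alpha,\beta}$ and $\psi$ acts by $\alpha \mapsto \alpha^c$ on both the ribbon and fundamental bases, $\psi$ is self-adjoint with respect to the pairing, so $\langle \psi(\shin_{\alpha}), \shin^*_{\beta} \rangle = \langle \shin_{\alpha}, \psi(\shin^*_{\beta}) \rangle = \langle \shin_{\alpha}, \rshin^*_{\beta} \rangle$. Because $\rshin_{\alpha}$ is defined as the dual basis to $\rshin^*_{\alpha}$, checking that $\psi(\shin_{\alpha})$ pairs with $\rshin^*_{\beta}$ as $\delta_{\alpha,\beta}$ identifies $\psi(\shin_{\alpha}) = \rshin_{\alpha}$. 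Finally, the basis claims are immediate: $\psi$ is an invertible linear map (an involution) on each of $QSym$ and \textit{NSym}, so it carries the bases $\{\shin^*_{\alpha}\}$ and $\{\shin_{\alpha}\}$ to bases $\{\rshin^*_{\alpha}\}$ and $\{\rshin_{\alpha}\}$ respectively.
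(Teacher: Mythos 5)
Your proposal is correct and follows exactly the route the paper indicates: the paper cites \cite{row0hecke} for this proposition and only sketches the reason in the preceding sentence (same set of standard tableaux, complementary descent definitions, $\psi$ extending the complement map), which is precisely the argument you flesh out, together with the standard duality and involution arguments for the \textit{NSym} statement and the basis claims. One cosmetic point: to identify $\psi(\shin_{\alpha})=\rshin_{\alpha}$ you should display the pairing $\langle \psi(\shin_{\alpha}), \rshin^*_{\beta}\rangle = \langle \shin_{\alpha}, \psi(\rshin^*_{\beta})\rangle = \langle \shin_{\alpha}, \shin^*_{\beta}\rangle = \delta_{\alpha,\beta}$ rather than the pairing against $\shin^*_{\beta}$ that you wrote, though the two are equivalent since $\psi$ is a self-adjoint involution.
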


Recall that $\psi$ on $QSym$ restricts to $\omega$ on $Sym$, so it follows that $\rshin^*_{\lambda} = s_{\lambda'}$ for a partition $\lambda$. This is also easily seen from the (cancellation-free) expansions of the row-strict extended Schur functions into the monomial and fundamental bases, which follow from Equation \eqref{rshin*_to_F}.  For a composition $\alpha$, 

$$ \rshin^*_{\alpha} = \sum_{\beta} \mathcal{K}^{\rshin}_{\alpha, \beta} M_{\beta} \text{\quad and \quad} \rshin^*_{\alpha} = \sum_{\beta} \mathcal{L}^{\rshin}_{\alpha, \beta} F_{\beta},$$ where $\mathcal{K}^{\rshin}_{\alpha, \beta}$ is the number of row-strict shin tableaux of shape $\alpha$ and type $\beta$, and $\mathcal{L}^{\rshin}_{\alpha, \beta}$ is the number of standard row-strict shin tableaux of shape $\alpha$ with descent composition $\beta$. Dually, for a composition $\beta$, 

\begin{equation} \label{HR_to_rshin}
    H_{\beta} = \sum_{\alpha} \mathcal{K}^{\rshin}_{\alpha, \beta} \rshin_{\alpha} \text{\qquad and \qquad} R_{\beta} = \sum_{\alpha} \mathcal{L}^{\rshin}_{\alpha, \beta} \rshin_{\alpha}.
\end{equation}

We can apply $\psi$ to various results on the shin and extended Schur bases to find analogous results on the row-strict shin and row-strict extended Schur bases.

\begin{thm}\label{rshin_big_thm} For compositions $\alpha$, $\beta$ and a positive integer $m$,
    \begin{enumerate}
        \item Right Pieri Rule. $$\rshin_{\alpha} E_m = \sum_{ \alpha \subset^{\shin}_m \beta} \rshin_{\beta}.$$
        \item Right Ribbon Multiplication. $$\rshin_{\alpha} R_{\beta} = \sum_{\gamma \models |\alpha| + |\beta|} \sum_{U} \rshin_{\gamma},$$ where the sum runs over all skew standard row-strict shin-tableaux $U$ of shape $\gamma/\alpha$ with $co_{\rshin}(U) = \beta$.
        \item $\displaystyle E_{\beta} = \sum_{\alpha} \mathcal{K}_{\alpha,\beta} \rshin_{\alpha} \text{\qquad and \qquad} R_{\beta} = \sum_{\alpha} \mathcal{L}_{\alpha, \beta^c} \rshin_{\alpha}.$
        \item $\chi(\rshin_{\lambda})= s_{\lambda'}  \text{ for a partition $\lambda$, and } \chi(\rshin_{\alpha})=0  \text{ when $\alpha$ is not a partition.}$\vspace{4mm}
        \item Let $\gamma$ be a composition such that $\gamma_i < \gamma_{i+1}$ for all $1 \leq i < \ell(\gamma)$. Then, 
        $$\rshin_{\gamma} = \sum_{\sigma \in S_{\ell(\gamma)}} (-1)^{\sigma} E_{\gamma_{\sigma(1)}}E_{\gamma_{\sigma(2)}} \cdots E_{\gamma_{\sigma(\ell(\gamma))}},$$ where the sum runs over $\sigma \in S_{\ell(\gamma)}$ such that $\sigma(i) \geq i-1$ for all $i \in [\ell(\gamma)]$.\vspace{2mm}
        \item $ \displaystyle E_{\beta} = \sum_{\alpha} \mathcal{K}^{\rshin}_{\alpha, \beta} \shin_{\alpha} \text{\qquad and \qquad} R_{\beta} = \sum_{\alpha} \mathcal{L}^{\rshin}_{\alpha, \beta^c} \shin_{\alpha}.$
    \end{enumerate}
\end{thm}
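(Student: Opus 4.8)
The plan is to derive both identities by transporting the known expansions of Equation \eqref{HR_to_rshin} across the automorphism $\psi$ of \textit{NSym}, in direct analogy with how part (3) is obtained by applying $\psi$ to Equation \eqref{HR_to_shin}. Two inputs make this work: $\psi$ is a linear algebra automorphism, and $\psi$ interchanges the row-strict shin basis with the shin basis. For the latter, recall from Proposition \ref{psi_on_eschur} that $\psi(\shin_\alpha) = \rshin_\alpha$; since $\psi$ is an involution, this gives $\psi(\rshin_\alpha) = \shin_\alpha$, which is exactly what I will need.

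Before applying $\psi$, I would record its action on the complete homogeneous and elementary bases. Using the standard ribbon expansions $H_n = R_{(n)}$ and $E_n = R_{(1^n)}$ together with $(n)^c = (1^n)$, I get $\psi(H_n) = \psi(R_{(n)}) = R_{(n)^c} = R_{(1^n)} = E_n$. Because $\psi$ is an algebra automorphism and both $H_\beta = H_{\beta_1}\cdots H_{\beta_k}$ and $E_\beta = E_{\beta_1}\cdots E_{\beta_k}$ are multiplicative (by Equation \eqref{HR_mult}), this extends to $\psi(H_\beta) = E_\beta$ for every composition $\beta$.

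With these in hand, the first identity is immediate: applying $\psi$ to $H_\beta = \sum_\alpha \mathcal{K}^{\rshin}_{\alpha,\beta}\rshin_\alpha$ turns the left side into $E_\beta$ and the right side into $\sum_\alpha \mathcal{K}^{\rshin}_{\alpha,\beta}\shin_\alpha$. For the ribbon identity, I apply $\psi$ to $R_\beta = \sum_\alpha \mathcal{L}^{\rshin}_{\alpha,\beta}\rshin_\alpha$; since $\psi(R_\beta) = R_{\beta^c}$, this yields $R_{\beta^c} = \sum_\alpha \mathcal{L}^{\rshin}_{\alpha,\beta}\shin_\alpha$. Replacing $\beta$ by $\beta^c$, which is legitimate because complementation is an involution on compositions, gives $R_\beta = \sum_\alpha \mathcal{L}^{\rshin}_{\alpha,\beta^c}\shin_\alpha$, as claimed.

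I do not expect a genuine obstacle here; the argument is a purely formal transport across $\psi$, with no tableaux combinatorics required once Proposition \ref{psi_on_eschur} is available. The only points demanding care are confirming $\psi(H_\beta) = E_\beta$, which is the sole non-tautological input and is dispatched by the multiplicative reduction above, and the complement bookkeeping in the ribbon case, namely ensuring that the substitution $\beta \mapsto \beta^c$ lands on the subscript of $\mathcal{L}^{\rshin}$ rather than on the ribbon index.
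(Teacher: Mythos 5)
Your argument for item (6) is correct and is precisely the paper's route: the paper's entire proof of that item is ``Apply $\psi$ to Equation \eqref{HR_to_rshin},'' and you have supplied exactly the bookkeeping that one-liner suppresses --- the verification that $\psi(H_\beta)=E_\beta$ via $H_n=R_{(n)}$, $E_n=R_{(1^n)}$, $(n)^c=(1^n)$, and multiplicativity; the use of $\psi(\rshin_\alpha)=\shin_\alpha$ from Proposition \ref{psi_on_eschur} together with involutivity; and the substitution $\beta\mapsto\beta^c$ landing on the subscript of $\mathcal{L}^{\rshin}$ rather than on the ribbon index. No step in that portion fails.

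The genuine gap is one of coverage: the statement is a six-part theorem, and your proposal proves only part (6), invoking part (3) as already known and saying nothing about parts (1), (2), (4), and (5). The paper establishes all six by the same transport-under-$\psi$ principle, but each part requires its own input, and some require more than formal transport. Specifically: (1) follows from applying $\psi$ to the defining right Pieri rule (Definition \ref{shin_r_pieri}); (2) follows from Theorem \ref{shin_right_R_mult}, but only after the additional combinatorial observation that standard skew shin-tableaux of shape $\gamma/\alpha$ with descent composition $\beta^c$ are exactly the standard skew row-strict shin-tableaux of that shape with descent composition $\beta$ (the underlying standard fillings coincide and the two descent notions are complementary) --- this step is not purely formal; (3) follows from Equation \eqref{HR_to_shin}; (4) uses that $\psi$ restricts to the classical $\omega$ on $Sym$, so $\psi(\shin^*_\lambda)=s_{\lambda'}$, together with the duality of $\chi$ with the inclusion $Sym\hookrightarrow QSym$; and (5) follows from the Jacobi--Trudi rule of Theorem \ref{JT_shin}. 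Your method does extend to these items, but as written the proof of the theorem is incomplete.
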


\begin{proof} For a composition $\alpha$, note that $\psi(H_{\alpha}) = E_{\alpha}$ \cite{Luoto2013}.  
 \hfill
    \begin{enumerate}
        \item Apply $\psi$ to Definition \ref{shin_r_pieri}.
        \item Applying $\psi$ to the LHS of Theorem \ref{shin_right_R_mult} yields $\rshin_{\alpha}R_{\beta^c}$ while the RHS yields $ \displaystyle \sum_{\gamma \models |\alpha|+|\beta|} \sum_U \rshin_{\gamma}$ where the sum runs over all skew standard shin-tableaux $U$ of shape $\gamma / \alpha$ and descent composition $\beta$. By switching $\beta$ with $\beta^c$ everywhere, we can rewrite this equality as $ \displaystyle \rshin_{\alpha}R_{\beta} = \sum_{\gamma \models |\alpha|+|\beta|} \sum_{U} \rshin_{\gamma}$ where the sum runs over all skew standard shin-tableaux $U$ of shape $\gamma/\alpha$ with descent composition $\beta^c$.  Recall that these are equivalent to the skew standard row-strict shin-tableaux of shape $\gamma/\alpha$ with descent composition $\beta$. Our claim follows.
        \item Apply $\psi$ to Equation \eqref{HR_to_shin}.
        \item Based on the restriction of $\omega$ to $Sym$, we have $\psi(\shin^*_{\lambda}) = \omega(s_{\lambda}) = s_{\lambda'}$.  Because $\chi$ is dual to the inclusion map from $Sym$ to $QSym$ \cite{Camp}, we have $\chi(\rshin_{\alpha}) = \sum_{\lambda} (\text{coefficient of $\rshin^*_{\alpha}$ in $s_{\lambda}$}) s_{\lambda}$.  Our claim follows.
        \item Apply $\psi$ to Theorem \ref{JT_shin}.
        \item Apply $\psi$ to Equation \eqref{HR_to_rshin}. \qedhere
    \end{enumerate}
\end{proof}

We also develop row-strict versions of our results from Section \ref{section_skew}.

\begin{defn}
    For compositions $\alpha$ and $\beta$ with $\beta \subseteq \alpha$, the \emph{skew row strict extended Schur functions} are defined by $$\rshin^*_{\alpha/\beta} = \rshin^{\perp}_{\beta}(\rshin^*_{\alpha}).$$
\end{defn}

By Equation \eqref{NSym_perp}, $\rshin^*_{\alpha/\beta}$ expands into various bases as follows. For compositions $\alpha$ and $\beta$ with $\beta \subseteq \alpha$, 
\begin{equation}\label{rshin_skew_coeffs}
    \rshin^*_{\alpha/\beta}= \sum_{\gamma} \langle \rshin_{\beta}H_{\gamma}, \rshin^*_{\alpha}  \rangle M_{\gamma} = \sum_{\gamma} \langle \rshin_{\beta} R_{\gamma}, \rshin^*_{\alpha} \rangle F_{\gamma} = \sum_{\gamma} \langle \rshin_{\beta}\rshin_{\gamma}, \rshin^*_{\alpha} \rangle \rshin^*_{\gamma}.
\end{equation}
Like before, the coefficients $\langle \rshin_{\beta} \rshin_{\gamma}, \rshin^*_{\alpha}  \rangle$ are the coefficients that appear when multiplying row-strict shin functions, $$\rshin_{\beta} \rshin_{\gamma} = \sum_{\alpha} \langle \rshin_{\beta} \rshin_{\gamma}, \rshin^*_{\alpha}  \rangle \rshin_{\alpha}.$$

Because $\psi$ is an automorphism in both $QSym$ and \textit{NSym}, it maps the skew row-strict extended Schur to the extended Schur functions.

\begin{prop}
    For compositions $\alpha$ and $\beta$ with $\beta \subseteq \alpha$, $$\psi(\shin^*_{\alpha/\beta}) = \rshin^*_{\alpha/\beta}.$$ 
\end{prop}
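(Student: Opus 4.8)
The plan is to reduce the statement to a single compatibility between the involution $\psi$ and the perp operator, and then feed in the known identities $\psi(\shin_\beta) = \rshin_\beta$ and $\psi(\shin^*_\alpha) = \rshin^*_\alpha$ from Proposition \ref{psi_on_eschur}. Since $\shin^*_{\alpha/\beta} = \shin_\beta^{\perp}(\shin^*_\alpha)$ and $\rshin^*_{\alpha/\beta} = \rshin_\beta^{\perp}(\rshin^*_\alpha)$ by definition, the whole claim will fall out once I know how $\psi$ interacts with $H^{\perp}$.

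First I would record that $\psi$ preserves the bilinear pairing between \textit{NSym} and $QSym$. Since $\{R_\alpha\}$ and $\{F_\alpha\}$ are dual bases and $\psi(R_\alpha) = R_{\alpha^c}$, $\psi(F_\alpha) = F_{\alpha^c}$, I get $\langle \psi(R_\alpha), \psi(F_\beta)\rangle = \langle R_{\alpha^c}, F_{\beta^c}\rangle = \delta_{\alpha^c,\beta^c} = \delta_{\alpha,\beta}$, and by bilinearity $\langle \psi(H), \psi(F)\rangle = \langle H, F\rangle$ for all $H \in \textit{NSym}$ and $F \in QSym$. Combined with $\psi^2 = \mathrm{id}$, this also yields $\langle G, \psi(z)\rangle = \langle \psi(G), z\rangle$, which I will use freely.

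The central step is the identity $\psi(H^{\perp}F) = \psi(H)^{\perp}\psi(F)$, which I would prove by testing against an arbitrary $G \in \textit{NSym}$. Using pairing-invariance, the defining relation of $^{\perp}$, and crucially the fact that $\psi$ is an algebra \emph{automorphism} of \textit{NSym} (so that $\psi(H\,\psi(G)) = \psi(H)\,G$), one computes
$$\langle G, \psi(H^{\perp}F)\rangle = \langle \psi(G), H^{\perp}F\rangle = \langle H\,\psi(G), F\rangle = \langle \psi(H)\,G, \psi(F)\rangle = \langle G, \psi(H)^{\perp}\psi(F)\rangle.$$
Because this holds for every $G$ and the pairing is nondegenerate in the $QSym$ argument, the two elements of $QSym$ agree. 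Specializing $H = \shin_\beta$ and $F = \shin^*_\alpha$ and invoking Proposition \ref{psi_on_eschur} then gives
$$\psi(\shin^*_{\alpha/\beta}) = \psi\bigl(\shin_\beta^{\perp}(\shin^*_\alpha)\bigr) = \psi(\shin_\beta)^{\perp}\,\psi(\shin^*_\alpha) = \rshin_\beta^{\perp}(\rshin^*_\alpha) = \rshin^*_{\alpha/\beta}.$$

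The one point that genuinely requires care — and the reason the argument is specific to $\psi$ rather than $\rho$ or $\omega$ — is the step $\psi(H\,\psi(G)) = \psi(H)\,G$, which relies on $\psi$ being an algebra automorphism of \textit{NSym}. For $\rho$ and $\omega$, which are \emph{anti}-automorphisms, the same manipulation would instead produce $G\,\psi(H)$ and hence the right-perp operator $^{\rperp}$; this is precisely why those two involutions will connect to the skew-II functions rather than the ordinary skew functions. I therefore expect no substantive obstacle here, only the need to keep the automorphism-versus-anti-automorphism distinction explicit.
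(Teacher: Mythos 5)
Your proof is correct, and it runs on exactly the two ingredients the paper uses: that $\psi$ preserves the \textit{NSym}--$QSym$ pairing and that $\psi$ is an algebra automorphism (not anti-automorphism) of \textit{NSym}. The difference is in packaging. The paper works at the level of coefficients: it expands $\shin^*_{\alpha/\beta} = \sum_\gamma \langle \shin_\beta\shin_\gamma, \shin^*_\alpha\rangle\, \shin^*_\gamma$ via Equation \eqref{prop_shin_skew_coeffs}, applies $\psi$ termwise, rewrites each coefficient as $\langle \rshin_\beta\rshin_\gamma, \rshin^*_\alpha\rangle$, and matches the result against Equation \eqref{rshin_skew_coeffs}. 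You instead isolate the basis-free operator identity $\psi(H^{\perp}F) = \psi(H)^{\perp}\psi(F)$, verified by pairing against an arbitrary $G$ and invoking nondegeneracy of the graded pairing, and then specialize; your chain of four equalities is sound, including the crucial step $\psi(H\,\psi(G)) = \psi(H)\,G$. Your version buys generality and reusability: the same computation, with the automorphism step replaced by its anti-automorphism analogue, produces $\rho(H^{\perp}F) = \rho(H)^{\rperp}\rho(F)$ and the corresponding $\omega$ statement, which is precisely the mechanism behind the paper's later identification of $\rho(\shin^*_{\alpha/\beta})$ and $\omega(\shin^*_{\alpha/\beta})$ with \otherskew\ functions --- a point your closing remark correctly anticipates. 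The paper's coefficient-level route is more concrete and hands you, as an explicit by-product, the identity $\rshin_\beta\rshin_\gamma = \sum_\alpha \mathcal{C}^{\alpha}_{\beta,\gamma}\rshin_\alpha$ recorded in the corollary immediately following; with your approach that corollary would still need the one-line coefficient observation that opens the paper's proof.
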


\begin{proof}
First, observe that $$\langle \shin_{\beta} \shin_{\gamma}, \shin^*_{\alpha} \rangle = \langle \psi(\shin_{\beta} \shin_{\gamma}), \psi(\shin^*_{\alpha}) \rangle = \langle \rshin_{\beta} \rshin_{\gamma}, \rshin^*_{\alpha} \rangle,$$ because $\psi$ is invariant under duality and an automorphism in \textit{NSym}. Then, 
$$\psi(\shin^*_{\alpha/\beta})= \sum_{\gamma} \langle \shin_{\beta} \shin_{\gamma}, \shin^*_{\alpha} \rangle \psi(\shin^*_{\gamma}) = \sum_{\gamma} \langle \rshin_{\beta} \rshin_{\gamma}, \rshin^*_{\alpha} \rangle \rshin^*_{\gamma}.$$ By Equation \eqref{rshin_skew_coeffs}, this implies $\psi(\shin^*_{\alpha/\beta}) = \rshin^*_{\alpha/\beta}$. 
\end{proof}

The first line of the proof above yields the following.

\begin{cor}
For compositions $\alpha, \beta$ and $\gamma$, $$\rshin_{\beta} \rshin_{\gamma} = \sum_{\alpha} \mathcal{C}^{\alpha}_{\beta, \gamma} \rshin_{\alpha}.$$
\end{cor}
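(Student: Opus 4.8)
The plan is to combine the duality of the row-strict bases with the pairing computation already performed in the preceding proof. First I would recall that, by the proposition on $\psi$, the set $\{\rshin_{\alpha}\}_{\alpha}$ is a basis of \textit{NSym} dual to $\{\rshin^*_{\alpha}\}_{\alpha}$ in $QSym$. Consequently any product $\rshin_{\beta}\rshin_{\gamma}$ has a unique expansion in this basis, namely $\rshin_{\beta}\rshin_{\gamma} = \sum_{\alpha} \langle \rshin_{\beta}\rshin_{\gamma}, \rshin^*_{\alpha}\rangle \rshin_{\alpha}$, where the coefficient of $\rshin_{\alpha}$ is recovered by pairing the product against the dual basis element $\rshin^*_{\alpha}$.

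The crucial step is then to identify these coefficients with the shin structure coefficients $\mathcal{C}^{\alpha}_{\beta,\gamma} = \langle \shin_{\beta}\shin_{\gamma}, \shin^*_{\alpha}\rangle$. This is exactly the content of the first displayed line in the proof of the previous proposition: since $\psi$ is an algebra automorphism of \textit{NSym} with $\psi(\shin_{\beta}) = \rshin_{\beta}$ and $\psi(\shin_{\gamma}) = \rshin_{\gamma}$, it carries $\shin_{\beta}\shin_{\gamma}$ to $\rshin_{\beta}\rshin_{\gamma}$; and since $\psi$ is invariant under duality with $\psi(\shin^*_{\alpha}) = \rshin^*_{\alpha}$, we obtain $\langle \shin_{\beta}\shin_{\gamma}, \shin^*_{\alpha}\rangle = \langle \rshin_{\beta}\rshin_{\gamma}, \rshin^*_{\alpha}\rangle$. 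Substituting this identity into the expansion above yields $\rshin_{\beta}\rshin_{\gamma} = \sum_{\alpha} \mathcal{C}^{\alpha}_{\beta,\gamma}\rshin_{\alpha}$ at once.

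I do not anticipate a genuine obstacle, as every ingredient is already in hand: the duality of the row-strict bases, the fact that $\psi$ is an automorphism of \textit{NSym}, and the definition of $\mathcal{C}^{\alpha}_{\beta,\gamma}$. The only point requiring care is the pairing-invariance of $\psi$, i.e.\ that $\psi$ on \textit{NSym} and $\psi$ on $QSym$ are adjoint, so that $\langle \psi(H), \psi(F)\rangle = \langle H, F\rangle$; this is precisely what makes the middle equality of the cited first line valid, and it follows from $\psi$ being defined compatibly on the mutually dual ribbon and fundamental bases. Thus the corollary is an immediate consequence of expanding $\rshin_{\beta}\rshin_{\gamma}$ in the row-strict shin basis and reading off the already-computed pairings.
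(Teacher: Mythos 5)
Your argument is correct and is exactly the paper's: the paper derives this corollary by noting that the first displayed line of the preceding proof gives $\langle \shin_{\beta}\shin_{\gamma}, \shin^*_{\alpha}\rangle = \langle \rshin_{\beta}\rshin_{\gamma}, \rshin^*_{\alpha}\rangle$ (via $\psi$ being a duality-invariant automorphism of \textit{NSym}), and then reads off the expansion of $\rshin_{\beta}\rshin_{\gamma}$ in the dual basis. No gaps; your care about the adjointness of $\psi$ on the two algebras is precisely the point the paper relies on.
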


By Proposition \ref{shin_struct_coeff}, many of the coefficients of the skew row-strict extended Schur functions expanded into the row-strict extended Schur basis are zero while others equal certain Littlewood-Richardson coefficients. Thus, the same is true for the structure coefficients of the row-strict shin functions. 

Like the skew extended Schur functions, the skew row-strict extended Schur functions are defined combinatorially via a class of skew tableaux. 

\begin{defn} For compositions $\alpha = (\alpha_1, \ldots, \alpha_k)$  and $\beta = (\beta_1, \ldots, b_{\ell})$ such that $\beta \subseteq \alpha$, a \emph{skew row-strict shin-tableau} of skew shape $\alpha/\beta$ is a diagram $\alpha/\beta$ filled with integers such that each row is strictly increasing left to right, each column is weakly increasing top to bottom, and if $\alpha_i> \beta_i$ for any  $1 \leq i \leq k$, then $\beta_j < \beta_i$ for all $j > i$.
A skew row-strict shin-tableau is \emph{standard} if it contains the numbers $1$ through $|\alpha| - |\beta|$ each exactly once. 
\end{defn}

\begin{ex}
     The three leftmost diagrams are skew row-strict shin-tableaux while the rightmost diagram is not.

    $$ \text{Skew row-strict shin:\quad}
    \scalebox{.85}{
    \begin{ytableau}
        *(gray) & *(gray) & 2\\
        *(gray) &  1 & 2\\
        1 & 2
    \end{ytableau}
    \qquad 
    \begin{ytableau}
         *(gray) & *(gray) & 1\\
        *(gray)   \\
        1 & 2
    \end{ytableau}
    \qquad
    \begin{ytableau}
        *(gray)\\ 
        *(gray) & *(gray) & 1 \\
        1 & 2
    \end{ytableau}}
     \qquad
     \qquad
     \text{Not:\quad }
     \scalebox{.85}{
     \begin{ytableau}
         *(gray) & 1 & 2 \\
        *(gray) & *(gray) & 2 \\
        1 & 2
     \end{ytableau}}
    $$
\end{ex}

\begin{prop}\label{rskew_rshin_tab}
    For compositions $\alpha$ and $\beta$ where $\beta \subseteq \alpha$, $$\rshin^*_{\alpha/\beta} = \sum_{T} x^T,$$ where the sum runs over all skew row-strict shin-tableaux $T$ of shape $\alpha/\beta$.
\end{prop}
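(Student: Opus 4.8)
The plan is to prove the identity by computing the expansion of each side into the fundamental basis $\{F_\gamma\}$ and checking that they agree; this is cleaner than a direct monomial/chain argument, because $\rshin_\beta H_m$ does not obey a horizontal-strip Pieri rule (the row-strict Pieri rule of Theorem \ref{rshin_big_thm}(1) is phrased in terms of $E_m$, not $H_m$, so there is no clean chain interpretation of the monomial coefficient $\langle \rshin_\beta H_\gamma, \rshin^*_\alpha\rangle$ from Equation \eqref{rshin_skew_coeffs}). First I would record the fundamental expansion of the ordinary skew extended Schur function. Reading Theorem \ref{shin_right_R_mult} as a statement about coefficients, the multiplicity $\langle \shin_\beta R_\gamma, \shin^*_\alpha\rangle$ appearing in Equation \eqref{prop_shin_skew_coeffs} equals the number of standard skew shin-tableaux of shape $\alpha/\beta$ with $co_{\shin}(U)=\gamma$, so that
\[
\shin^*_{\alpha/\beta} = \sum_{U} F_{co_{\shin}(U)},
\]
where $U$ runs over standard skew shin-tableaux of shape $\alpha/\beta$.

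Next I would transport this across the involution $\psi$. Since $\psi(\shin^*_{\alpha/\beta}) = \rshin^*_{\alpha/\beta}$ (already established) and $\psi(F_\gamma) = F_{\gamma^c}$, applying $\psi$ gives $\rshin^*_{\alpha/\beta} = \sum_{U} F_{(co_{\shin}(U))^c}$. The key combinatorial bridge is that a standard skew shin-tableau and a standard skew row-strict shin-tableau are the same object: in either definition a standard filling of $\alpha/\beta$ has all entries distinct, so rows and columns are automatically strictly increasing, and condition (iii) on the skew shape is identical in both definitions. Moreover, for such a standard filling the box containing $i+1$ lies either strictly below $i$ or weakly above it, and these alternatives are mutually exclusive and exhaustive; hence $Des_{\shin}(U)$ and $Des_{\rshin}(U)$ are complements of one another (cf.\ Table \ref{table_shin}), and therefore $co_{\rshin}(U) = (co_{\shin}(U))^c$. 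This yields
\[
\rshin^*_{\alpha/\beta} = \sum_{U} F_{co_{\rshin}(U)},
\]
the sum now over standard skew row-strict shin-tableaux $U$ of shape $\alpha/\beta$.

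Finally I would match this against the claimed tableau generating function by a standardization argument. Standardizing a skew row-strict shin-tableau $T$, by replacing its entries with $1, \ldots, |\alpha|-|\beta|$ in the row-strict reading order, produces a standard skew row-strict shin-tableau $U$; summing $x^T$ over all $T$ that standardize to a fixed $U$ gives precisely $F_{co_{\rshin}(U)}$, which is the skew analogue of the straight-shape expansion recorded in Equation \eqref{rshin*_to_F}. Summing over all $U$ then gives $\sum_T x^T = \sum_U F_{co_{\rshin}(U)} = \rshin^*_{\alpha/\beta}$, as desired. The main obstacle is this last standardization step: one must fix the reading-order and descent conventions for skew row-strict shin-tableaux and verify that the fiber of a standard tableau under standardization contributes exactly one fundamental function indexed by its descent composition. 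Everything else is formal, being either a reading-off of Theorem \ref{shin_right_R_mult} or a routine application of the involution $\psi$.
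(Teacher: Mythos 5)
Your proof is correct and follows essentially the same route as the paper: both reduce the claim to the fundamental expansion $\rshin^*_{\alpha/\beta} = \sum_U F_{co_{\rshin}(U)}$ over standard skew row-strict shin-tableaux and then finish with the standardization argument. The only cosmetic difference is that the paper cites Theorem \ref{rshin_big_thm}(2) directly to identify $\langle \rshin_\beta R_\gamma, \rshin^*_\alpha\rangle$, whereas you re-derive that identification by applying $\psi$ to Theorem \ref{shin_right_R_mult} and complementing descent sets --- which is precisely how Theorem \ref{rshin_big_thm}(2) was proved in the first place.
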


\begin{proof}
    By Theorem \ref{rshin_big_thm} (2), the coefficient $\langle \rshin_{\beta}R_{\gamma}, \rshin^*_{\alpha} \rangle$ is the number of skew row-strict shin-tableaux of shape $\alpha/\beta$  with descent composition $\gamma$, which we denote $\mathcal{L}^{\rshin}_{\alpha/\beta, \gamma}$. Then by Equation \eqref{rshin_skew_coeffs}, we have $$\rshin^*_{\alpha/\beta} = \sum_{\gamma} \mathcal{L}^{\rshin}_{\alpha/\beta, \gamma} F_{\gamma}.$$ From here it is simple to expand $F_{\gamma}$ into a sum over tableaux and obtain our claim.
\end{proof}

\begin{ex}
    The skew row-strict extended Schur function $\rshin^*_{(1,3,2)/(1,2)}$ expands in terms of the fundamental basis as $$\rshin^*_{(1,3,2)/(1,2)} = F_{2,1} + F_{1,2} + F_{1,1,1}. \qquad \qquad \scalebox{.85}{\begin{ytableau}
        *(gray)\\ 
        *(gray) & *(gray) & 1 \\
        2 & 3
    \end{ytableau} 
    \qquad
     \begin{ytableau}
        *(gray)\\ 
        *(gray) & *(gray) & 2 \\
        1 & 3
    \end{ytableau}
    \qquad
     \begin{ytableau}
        *(gray)\\ 
        *(gray) & *(gray) & 3 \\
        1 & 2
    \end{ytableau}} \vspace{2mm}$$
    This further expands in terms of skew row-strict shin tableaux as

$$\rshin^*_{(1,3,2)/(1,2)} = x_1^2 x_2 + x_1 x_2 x_3 + x_1x_2^2 + x_1x_2x_3 + x_1 x_2 x_3 \cdots \vspace{2mm}$$
    $$\scalebox{.85}{ \begin{ytableau}
        *(gray)\\ 
        *(gray) & *(gray) & 1 \\
        1 & 2
    \end{ytableau}
    \quad \quad
     \begin{ytableau}
        *(gray)\\ 
        *(gray) & *(gray) & 1 \\
        2 & 3
    \end{ytableau}
    \quad \quad
     \begin{ytableau}
        *(gray)\\ 
        *(gray) & *(gray) & 2 \\
        1 & 2
    \end{ytableau}
    \quad \quad
     \begin{ytableau}
        *(gray)\\ 
        *(gray) & *(gray) & 2 \\
        1 & 3
    \end{ytableau}
    \quad \quad
     \begin{ytableau}
        *(gray)\\ 
        *(gray) & *(gray) & 3 \\
        1 & 2
    \end{ytableau}}\quad \cdots \vspace{2mm}$$
\end{ex}

\subsection{Reverse extended Schur and shin functions}

Let $\alpha$ be a composition and $\beta$ a weak composition. A \emph{reverse shin-tableau} (FST) of shape $\alpha$ and type $\beta$ is a diagram $\alpha$ filled with positive integers that weakly decrease along the rows from left to right and strictly increase along the columns from top to bottom, where each positive integer $i$ appears $\beta_i$ times.  A \emph{standard} reverse shin-tableau (SFST) of shape $\alpha \models n$ is one containing the entries $1$ through $n$ each exactly once. 

\begin{ex} A few reverse shin-tableaux of shape $(2,3)$ are 
    $$\scalebox{.85}{
    \begin{ytableau}
        1& 1\\
        2&2 &2
    \end{ytableau}
    \qquad
    \begin{ytableau}
        1 & 1\\
        4&3 &2
    \end{ytableau}
    \qquad
    \begin{ytableau}
        3& 1\\
        4&2 &1
    \end{ytableau}
    \qquad
    \begin{ytableau}
        3& 1\\
        4&3 &3
    \end{ytableau}
    \qquad
    \begin{ytableau}
        2& 1\\
        3&2 &2
    \end{ytableau}
    \qquad
    \begin{ytableau}
        2& 1\\
        4&3 &1
    \end{ytableau}}
    $$
\end{ex}

\begin{defn}
    For a composition $\alpha$, the \emph{reverse extended Schur function} is defined as $$\fshin^*_{\alpha} = \sum_T x^T,$$ where the sum runs over all reverse shin-tableaux $T$ of shape $\alpha$.
\end{defn}

 The \emph{descent set} is defined as $Des_{\fshin}(S) = \{i : i+1 \text{ is strictly below $i$ in $S$} \}$ for a standard reverse shin-tableau $S$. Each entry $i$ in $Des_{\fshin}(S)$ is called a \emph{descent} of $S$. The \emph{descent composition} of $S$ is defined $co_{\fshin}(S) = (i_1, i_2 - i_1, \ldots, i_{d}-i_{d-1}, n-i_{d})$ for $Des_{\fshin}(S) = \{i_1, \ldots, i_d\}$. The \emph{reverse shin-reading word} of a reverse shin-tableau $T$, denoted $rw_{\fshin}(T)$ is obtained by reading the rows of $T$ from right to left starting with the bottom row and moving up. To \emph{standardize} a reverse shin-tableau $T$, replace the $1$'s in $T$ with $1,2,\ldots$ in the order they appear in $rw_{\fshin}(T)$, then the $2$'s starting with the next consecutive number, etc.

\begin{prop}\label{fshin*_to_F} For a composition $\alpha$, $$\fshin^*_{\alpha} = \sum_{S} F_{co_{\fshin}(S)},$$ where the sum runs over standard reverse shin-tableaux $U$ of shape $\alpha$.
\end{prop}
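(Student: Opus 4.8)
The plan is to prove this via the involution $\rho$ rather than arguing directly from the combinatorial definition of flipped shin-tableaux. Recall from the relations recorded just above that $\rho(\shin^*_{\alpha}) = \fshin^*_{\alpha^r}$; since $\rho$ is an involution, replacing $\alpha$ by $\alpha^r$ gives $\fshin^*_{\alpha} = \rho(\shin^*_{\alpha^r})$. First I would apply $\rho$ to the known fundamental expansion of the extended Schur functions,
$$\shin^*_{\alpha^r} = \sum_{U} F_{co_{\shin}(U)},$$
where $U$ ranges over standard shin-tableaux of shape $\alpha^r$. Because $\rho(F_{\gamma}) = F_{\gamma^r}$ and $\rho$ is linear, this immediately yields
$$\fshin^*_{\alpha} = \sum_{U} F_{co_{\shin}(U)^r},$$
again summing over standard shin-tableaux $U$ of shape $\alpha^r$. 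It then remains only to reindex this sum by standard flipped shin-tableaux of shape $\alpha$.

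The core of the argument is a shape- and descent-compatible bijection. I would define a map $\phi$ from standard shin-tableaux of shape $\alpha^r$ to fillings of shape $\alpha$ by first complementing every entry via $i \mapsto n+1-i$, where $n = |\alpha|$, and then reversing the order of the rows (reflecting the diagram top-to-bottom). The complement turns the weakly increasing rows and strictly increasing columns of $U$ into strictly decreasing rows and strictly decreasing columns; the subsequent row-reversal converts the strictly decreasing columns back into strictly increasing columns and carries the shape $\alpha^r$ to $(\alpha^r)^r = \alpha$. Hence $\phi(U)$ is a standard flipped shin-tableau of shape $\alpha$, and since both operations are invertible, $\phi$ is a bijection onto the set of such tableaux. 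Checking the tableau conditions box-by-box here is routine.

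The remaining, and most delicate, step is to track descents through $\phi$. I would show that $i \in Des_{\shin}(U)$ if and only if $n-i \in Des_{\fshin}(\phi(U))$. In the forward direction, if $i+1$ lies strictly below $i$ in $U$, then after the complement $i \mapsto n+1-i$ the entry $n-i = (n+1-i)-1$ lies strictly below $n+1-i$, so that $(n-i)+1 = n+1-i$ lies strictly \emph{above} $n-i$; the row-reversal then converts ``strictly above'' into ``strictly below,'' placing $(n-i)+1$ strictly below $n-i$ in $\phi(U)$ and making $n-i$ a flipped descent. Each step is reversible, so the equivalence holds, giving $Des_{\fshin}(\phi(U)) = \{\, n-i : i \in Des_{\shin}(U)\,\}$. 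A short computation with the definition of descent composition then shows that reversing a descent set in this way reverses the associated composition, i.e. $co_{\fshin}(\phi(U)) = co_{\shin}(U)^r$. Substituting this identity into the displayed sum and using that $\phi$ is a bijection yields $\fshin^*_{\alpha} = \sum_{S} F_{co_{\fshin}(S)}$ over standard flipped shin-tableaux $S$ of shape $\alpha$, as claimed.

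I expect the descent bookkeeping to be the main obstacle: one must confirm that the complement sends a ``strictly below'' relation to a ``strictly above'' relation between the relabeled consecutive pair and that the row-reversal restores exactly the flipped-descent condition, and then that the resulting descent-set reversal matches the composition reversal $(\,\cdot\,)^r$ coming from $\rho$. Carrying a small worked example along, such as the shape $(3,2) \to (2,3)$, is worthwhile to keep the two index reversals (the entry complement and the row flip) from being conflated. As an alternative self-contained route, one could instead expand $\fshin^*_{\alpha} = \sum_T x^T$ directly, group the flipped shin-tableaux $T$ by their standardization $S$, and prove the lemma $\sum_{\,\mathrm{std}(T)=S} x^T = F_{co_{\fshin}(S)}$ by characterizing the de-standardizations of $S$ as the weakly monotone fillings that strictly change precisely at the descents of $S$; this mirrors the classical Schur argument but requires adapting it to strictly decreasing rows.
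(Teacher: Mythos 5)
Your main argument is circular given how the paper is structured. The identity $\rho(\shin^*_{\alpha}) = \fshin^*_{\alpha^r}$ that you take as your starting point is not available at this stage: in the paper it is Theorem \ref{rho_on_shin}, which is proved \emph{after} this proposition, and whose proof needs exactly the expansion $\fshin^*_{\alpha} = \sum_S F_{co_{\fshin}(S)}$ in order to recognize the sum $\sum_S F_{co_{\fshin}(S)}$ as $\fshin^*_{\alpha^r}$ in its last step. The relations displayed in the section introduction are a preview, not established facts; the flipped extended Schur functions are formally \emph{defined} as the monomial generating function $\sum_T x^T$ over flipped shin-tableaux, so any proof of the $F$-expansion must tie back to that definition. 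Everything you do after your first step --- applying $\rho$ to $\shin^*_{\alpha^r} = \sum_U F_{co_{\shin}(U)}$, the flip bijection, and the descent bookkeeping $Des_{\fshin}(flip(U)) = \{\,n-i : i \in Des_{\shin}(U)\,\}$ --- is correct, but it establishes $\rho(\shin^*_{\alpha^r}) = \sum_S F_{co_{\fshin}(S)}$, which is the content of Theorem \ref{rho_on_shin}, not of this proposition. To close the gap along your route you would need an independent, monomial-level proof that $\rho(\shin^*_{\alpha^r}) = \sum_T x^T$, i.e.\ a type-reversing bijection between arbitrary (not merely standard) shin-tableaux of shape $\alpha^r$ and flipped shin-tableaux of shape $\alpha$; the entry complement $i \mapsto n+1-i$ does not extend to semistandard fillings over an unbounded alphabet, so that step is not routine.

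The ``alternative self-contained route'' you mention in one sentence at the end is in fact the paper's proof, and it is the one that works here: write $\fshin^*_{\alpha} = \sum_S \sum_{std(T)=S} x^T$, show that the flat type of any flipped shin-tableau $T$ standardizing to a fixed standard $S$ refines $co_{\fshin}(S)$ (using the right-to-left, bottom-to-top standardization order) and that every refinement arises from exactly one such $T$, and conclude $\sum_{std(T)=S} x^T = \sum_{\beta \preceq co_{\fshin}(S)} M_{\beta} = F_{co_{\fshin}(S)}$. If you promote that sketch to your main argument, the proof is complete, and the $\rho$-relation you wanted to start from then follows as a consequence --- which is the order of logic the paper uses.
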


\begin{proof}
    We can write $\fshin^*_{\alpha} = \sum_{S} \sum_{std(T)=S} x^T$ where the sums run over standard reverse shin-tableaux $S$ of shape $\alpha$ and reverse shin-tableaux $T$ that standardize to $S$. Now we want to show that, given a SFST $S$, we can write $F_{co_{\fshin}(S)} = \sum_{std(T) =S} x^T$ where the sum runs over FST $T$ that standardize to $S$. First, observe that given $T$ such that $std(T)=S$, if $i \in Des_{\fshin}(S)$ then the box in $T$ corresponding to $i+1$ is strictly below the box corresponding to $i$.  Therefore, by the order of standardization (right to left, bottom to top), the box corresponding to $i+1$ must be filled with a strictly higher number than the box corresponding to $i$.  It follows that $\widetilde{type(T)}$ is a refinement of the descent composition of $S$. In fact, any refinement $B$ of the descent composition of $S$ is a possible type of a tableau $T$ such that $std(T)=S$ because it is associated with a valid filling. Each box of $S$ corresponds to a letter in $B$. If a box of $S$ corresponds to a letter in word $v_i$ of $B$, then fill that same box in $T$ with an $i$.  This is equivalent to reading through the boxes $T$ in the order they appear in $S$, and filling them based on the location of the corresponding letter in $B$. Note that this method creates the unique tableaux $T$ with a specific type that standardizes to $S$ because we have used the only possible order of filling to maintain our desired type and standardization.  Thus, $$F_{co_{\fshin}(S)} = \sum_{\beta \preceq co_{\fshin}(S)} M_{\beta} = \sum_{\beta \preceq co_{\fshin}(S)} \sum_{\widetilde{type(T)} = \beta} x^T = \sum_{std(T)=S} x^T.$$ Therefore, we have $\fshin^*_{\alpha} = \sum_S F_{co_{\fshin}(S)}$ where the sum runs over SFSTs of shape $\alpha$. 
\end{proof}

\begin{ex}\label{fshin_F_ex} The $F$-expansion of the reverse extended Schur function $\fshin^*_{(3,2)}$ and the standard reverse shin-tableaux of shape $(3,2)$ are:
$$\fshin^*_{(3,2)} = F_{(3,2)} + F_{(2,2,1)} \qquad \qquad \scalebox{.75}{\begin{ytableau}
  3 & 2 & 1 \\
  5 & 4
\end{ytableau}
\qquad
\begin{ytableau}
   4 & 2 & 1 \\
   5 & 3
\end{ytableau}}$$
\end{ex}

Let $\mathcal{K}^{\fshin}_{\alpha, \beta}$ be the number of FST of shape $\alpha$ and type $\beta$, and let $\mathcal{L}^{\fshin}_{\alpha, \beta}$ be the number of SFST with shape $\alpha$ and descent composition $\beta$. Using Proposition \ref{fshin*_to_F}, it is straightforward to show that the reverse extended Schur functions have the following positive (cancellation-free) expansions into the monomial and fundamental bases. For a composition $\alpha$, \begin{equation}\label{fshin*_expansions}
\fshin^*_{\alpha} = \sum_{\beta} \mathcal{K}^{\fshin}_{\alpha, \beta} M_{\beta} \text{\quad and \quad}  \fshin^*_{\alpha}= \sum_{\beta} \mathcal{L}^{\fshin}_{\alpha, \beta} F_{\beta}.\end{equation}

To understand the relationship between standard shin-tableaux and standard reverse shin-tableaux, we establish a bijection $flip: \{ \text{standard shin-tableaux} \} \rightarrow \{ \text{standard reverse shin-tableaux} \}$.  Define $flip(S)$ to be the tableau $U$ obtained by flipping $S$ horizontally (in other words, reversing the order of the rows of $S$) and then replacing each entry $i$ with $n+1-i$. The map $flip$ is an involution between the set of standard shin-tableaux and the set of standard reverse shin-tableaux.

\begin{ex} The map $flip$ works on the following tableau as follows:
$$ flip \left(\ \scalebox{.85}{\ \begin{ytableau} 1 & 3 & 4 \\
2 & 5 \end{ytableau}}\ \right) \quad = \quad \scalebox{.85}{ \begin{ytableau} 4 & 1 \\
5 & 3 & 2 \end{ytableau}}$$
\end{ex}

By construction, the descent composition of a standard shin-tableau $U$ is the reverse of the descent composition of the standard reverse shin-tableau given by $flip(U)$. Using this fact, we show that the reverse extended Schur functions are the image of the extended Schur functions under $\rho$.

\begin{thm}\label{rho_on_shin} For a composition $\alpha$, $$\rho(\shin^*_{\alpha}) = \fshin^*_{\alpha^r} \text{\quad and \quad} \omega(\rshin^*_{\alpha}) = \fshin^*_{\alpha^r}.$$ Moreover, $\{\fshin^*_{\alpha}\}_{\alpha}$ is a basis of $QSym$. 
\end{thm}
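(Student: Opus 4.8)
The plan is to prove the first identity $\rho(\shin^*_{\alpha}) = \fshin^*_{\alpha^r}$ by comparing fundamental expansions, and then to deduce the second identity and the basis claim formally. Recall the fundamental expansion $\shin^*_{\alpha} = \sum_U F_{co_{\shin}(U)}$, where $U$ ranges over the standard shin-tableaux of shape $\alpha$, and that $\rho$ acts on $QSym$ by $\rho(F_{\gamma}) = F_{\gamma^r}$. Applying $\rho$ term by term gives $\rho(\shin^*_{\alpha}) = \sum_U F_{co_{\shin}(U)^r}$. On the other side, Proposition \ref{fshin*_to_F} gives $\fshin^*_{\alpha^r} = \sum_S F_{co_{\fshin}(S)}$, where $S$ ranges over the standard flipped shin-tableaux of shape $\alpha^r$. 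Thus it suffices to produce a bijection between these two indexing sets that matches $co_{\shin}(U)^r$ with $co_{\fshin}(S)$.

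The bijection is exactly the map $flip$ introduced just above the theorem. First I would confirm that $flip$ sends the standard shin-tableaux of shape $\alpha$ onto the standard flipped shin-tableaux of shape $\alpha^r$: reversing the order of the rows turns shape $\alpha$ into shape $\alpha^r$, and replacing each entry $i$ by $n+1-i$ turns the strictly increasing rows of $U$ into strictly decreasing rows and, together with the row reversal, turns the strictly increasing columns of $U$ into strictly increasing columns of $flip(U)$; moreover $flip$ is its own inverse, hence a bijection. The crux is the descent identity $co_{\fshin}(flip(U)) = co_{\shin}(U)^r$. Tracking an entry $i$ of $U$ at row $r$ to the entry $n+1-i$ of $flip(U)$ at the reversed row, one checks that $j \in Des_{\fshin}(flip(U))$ if and only if $n-j \in Des_{\shin}(U)$; translating this statement about descent sets into descent compositions yields precisely the reversal $co_{\fshin}(flip(U)) = co_{\shin}(U)^r$. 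Substituting $S = flip(U)$ then identifies the two sums and proves $\rho(\shin^*_{\alpha}) = \fshin^*_{\alpha^r}$.

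For the second identity, I would use the relation $\omega = \rho \circ \psi$, the fact that $\psi$ is an involution, and Proposition \ref{psi_on_eschur}, which gives $\rshin^*_{\alpha} = \psi(\shin^*_{\alpha})$. Then $\omega(\rshin^*_{\alpha}) = \omega(\psi(\shin^*_{\alpha})) = (\rho \circ \psi \circ \psi)(\shin^*_{\alpha}) = \rho(\shin^*_{\alpha}) = \fshin^*_{\alpha^r}$, invoking the first identity in the final step. Finally, because $\rho$ is an automorphism of $QSym$ and $\{\shin^*_{\alpha}\}_{\alpha}$ is a basis, the image $\{\rho(\shin^*_{\alpha})\}_{\alpha} = \{\fshin^*_{\alpha^r}\}_{\alpha}$ is a basis; since $\alpha \mapsto \alpha^r$ permutes the compositions of each $n$, this reindexes to show that $\{\fshin^*_{\alpha}\}_{\alpha}$ is a basis of $QSym$.

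I expect the main obstacle to be the descent identity $co_{\fshin}(flip(U)) = co_{\shin}(U)^r$, that is, verifying that the complementation $i \mapsto n+1-i$ composed with the row reversal sends the shin descent set $\{i_1 < \cdots < i_d\}$ to $\{n-i_d < \cdots < n-i_1\}$ and hence reverses the associated composition. Everything else is either a direct term-by-term application of $\rho$ or a formal consequence of the relations among $\psi$, $\rho$, and $\omega$ together with the fact that $\rho$ is a basis-preserving automorphism.
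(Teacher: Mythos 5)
Your proposal is correct and follows essentially the same route as the paper: expand $\shin^*_{\alpha}$ in the fundamental basis over standard shin-tableaux, apply $\rho$ term by term, and use the $flip$ bijection together with the descent-reversal identity $co_{\fshin}(flip(U)) = co_{\shin}(U)^r$ to match the result with Proposition \ref{fshin*_to_F}, with the basis claim following from $\rho$ being an automorphism. The only difference is that you spell out the verification of the descent identity and the deduction of $\omega(\rshin^*_{\alpha}) = \fshin^*_{\alpha^r}$ via $\omega = \rho\circ\psi$, both of which the paper treats as immediate from the preceding discussion.
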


\begin{proof}
    Let $\alpha \models n$. First, observe that given a standard shin-tableau $U$ of shape $\alpha$ and a standard reverse shin-tableau $S$ of shape $\alpha^r$ with $flip(U) = S$, we have $(co_{\shin}(U))^r = co_{\fshin}(S)$.  Therefore,
    \begin{align*}
    \rho(\shin^*_{\alpha}) &= \rho(\sum_U F_{co_{\shin}(U)}) = \sum_U \rho(F_{co_{\shin}(U)})\\
    &= \sum_U F_{co_{\shin}(U)^r} = \sum_{flip(U)} F_{co_{\fshin}(flip(U))} = \sum_S F_{co_{\fshin}(S)},  
    \end{align*}
    where the sums run over SST $U$ of shape $\alpha$ and standard reverse shin-tableaux $S$ of shape $\alpha^r$. The fact that the reverse extended Schur functions are a basis follows from $\rho$ being an automorphism in $QSym$. 
\end{proof}

\begin{prop}
    The reverse extended Schur basis is not equivalent to the extended Schur basis or the row-strict extended Schur basis.
\end{prop}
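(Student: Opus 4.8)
The plan is to separate the three bases degree by degree through their fundamental expansions, exhibiting in each case an explicit function that lies in one basis but not the other. It helps to first record the structural consequence of the involutions. By Proposition \ref{psi_on_eschur} and Theorem \ref{rho_on_shin}, the row-strict and flipped bases are exactly the images $\psi(\{\shin^*_\alpha\}_\alpha)$ and $\rho(\{\shin^*_\alpha\}_\alpha)$ of the extended Schur basis, and since $\psi$ and $\rho$ fix the fundamental basis (via $F_\alpha \mapsto F_{\alpha^c}$ and $F_\alpha \mapsto F_{\alpha^r}$), every element of each basis has an explicit $F$-expansion obtained by applying $\alpha \mapsto \alpha^c$ or $\alpha \mapsto \alpha^r$ to the known $F$-expansions of the extended Schur functions (which are read off from standard shin-tableaux). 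Thus each comparison reduces to a concrete finite computation in the fundamental basis.

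For the \emph{flipped versus extended Schur} comparison I would use the single-fundamental characterization. Recall the Assaf--Searles result that $\shin^*_\alpha = F_\alpha$ if and only if $\alpha$ is a reverse hook $(1^k, m)$; combined with the fact that the transition matrix from the extended Schur basis to the fundamental basis is unitriangular (the row-by-row standard filling of shape $\alpha$ always has descent composition $\alpha$, so $\shin^*_\alpha = F_\alpha + \sum_{\beta \neq \alpha}\mathcal{L}_{\alpha,\beta}F_\beta$), this shows $\shin^*_\alpha$ can equal a single fundamental \emph{only} when $\alpha$ is a reverse hook. Applying $\rho$ through Theorem \ref{rho_on_shin} gives $\fshin^*_{(m,1^k)} = F_{(m,1^k)}$, so the flipped basis contains $\fshin^*_{(2,1)} = F_{(2,1)}$. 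Since $(2,1)$ is a hook but not a reverse hook, $F_{(2,1)}$ is not an extended Schur basis element, and the two bases differ.

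The \emph{flipped versus row-strict} comparison is the delicate one, and is where I expect the main obstacle to lie. The slick single-fundamental argument is unavailable here: applying $\psi$ and $\rho$ respectively to the reverse-hook characterization shows that the single-fundamental members of \emph{both} the row-strict and the flipped bases are exactly the hook-shaped compositions $(a,1^b)$, so no witness can be a single $F_\beta$ (and indeed the two bases coincide as sets in degrees $\leq 3$). I would therefore pass to degree $4$ and list all eight fundamental expansions in each basis via $\alpha \mapsto \alpha^c$ and $\alpha \mapsto \alpha^r$. The two eight-element sets turn out to coincide in seven slots, differing only in that the flipped basis contains $\fshin^*_{(1,2,1)} = F_{(1,2,1)} + F_{(2,1,1)}$ while the row-strict basis contains $\rshin^*_{(1,2,1)} = F_{(2,2)} + F_{(3,1)}$ in its place. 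As $F_{(1,2,1)} + F_{(2,1,1)} \neq F_{(2,2)} + F_{(3,1)}$ and the other seven basis elements match, $F_{(1,2,1)} + F_{(2,1,1)}$ is a flipped basis element absent from the row-strict basis, so the bases differ. The real content is precisely the finite enumeration needed to certify that this two-term function equals no row-strict basis element: this is what forces a genuine degree-$4$ computation rather than the one-line hook/reverse-hook argument used for the first comparison.
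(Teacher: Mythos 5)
Your proof is correct, and it is worth separating how its two halves relate to the paper's own argument. For the comparison with the extended Schur basis you take a genuinely different and more economical route: the paper exhibits the degree-$5$ witness $\fshin^*_{(3,2)} = F_{(3,2)} + F_{(2,2,1)}$ and rules out by hand the two shapes $(3,2)$ and $(4,1)$ whose extended Schur functions contain $F_{(3,2)}$, whereas you combine the Assaf--Searles reverse-hook characterization with the observation that $F_\alpha$ always occurs (with nonnegative companions) in $\shin^*_\alpha$, so that a single-fundamental extended Schur function is forced to be $F_\alpha$ with $\alpha$ a reverse hook; this yields the degree-$3$ witness $\fshin^*_{(2,1)} = F_{(2,1)}$. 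That argument is cleaner, and it has the added virtue of explaining exactly why the same trick cannot separate the flipped basis from the row-strict basis: both have precisely the hook fundamentals $F_{(a,1^b)}$ as their single-$F$ members, as you say. For the second comparison you land on the same witness as the paper, $\fshin^*_{(1,2,1)} = F_{(1,2,1)} + F_{(2,1,1)}$, and your degree-$4$ enumeration checks out (the two bases do agree in the other seven slots, e.g.\ $\fshin^*_{(2,2)} = \rshin^*_{(2,2)} = F_{(2,2)} + F_{(1,2,1)}$, with the only discrepancy at index $(1,2,1)$). The paper verifies non-membership more surgically: it notes that $(3,1)$ is the only shape admitting standard row-strict shin-tableaux with both descent compositions $(1,2,1)$ and $(2,1,1)$, and that $\rshin^*_{(3,1)}$ carries the extra term $F_{(1,1,2)}$; your exhaustive listing reaches the same conclusion with a few more lines but without having to reason about which shapes can realize which descent compositions. (One small point in your favor: the correct expansion is $\rshin^*_{(3,1)} = F_{(2,1,1)} + F_{(1,2,1)} + F_{(1,1,2)}$, consistent with your enumeration; the paper's displayed expansion has a typo, listing $F_{(3,1)}$ in place of $F_{(2,1,1)}$, which does not affect its argument.)
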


\begin{proof}
    From Example \ref{fshin_F_ex} above, we can see that there is no $\beta$ such that $\shin^*_{\beta} = \fshin^*_{(3,2)}$.  The only standard shin-tableaux with descent composition $(3,2)$ are tableaux of shape $(3,2)$ or shape $(4,1)$ meaning $\shin^*_{(3,2)}$ and $\shin^*_{(4,1)}$ are the only extended Schur functions in which $F_{(3,2)}$ appears but neither of them equal $\fshin^*_{(3,2)}$. 
    
    Now consider $\fshin^*_{(1,2,1)} = F_{(1,2,1)} + F_{(2,1,1)}$. The only $\beta$ for which there exists a standard row-strict shin-tableaux of shape $\beta$ with descent compositions $(1,2,1)$ and $(2,1,1)$ is $\beta = (3,1)$.  However, $\rshin^*_{(3,1)} = F_{(3,1)} + F_{(1,2,1)} + F_{(1,1,2)}$.  Thus, there is no $\beta$ such that $\fshin^*_{(1,2,2)} = \rshin^*_{\beta}$.
\end{proof}

Next, we consider the basis of \textit{NSym} that is dual to the reverse extended Schur functions and its relationship with the shin functions.

\begin{defn} Define the \emph{reverse shin basis} $\{ \fshin_{\alpha}\}_{\alpha}$ as the unique basis of \textit{NSym} that is dual to the reverse extended Schur basis. Equivalently, $\langle \fshin_{\alpha}, \fshin^*_{\beta} \rangle = \delta_{\alpha, \beta}$ for all compositions $\alpha$ and $\beta$. 
\end{defn}

The expansions of the complete homogeneous functions and the monomial functions of \textit{NSym} into the reverse shin basis follow via duality from Equation \eqref{fshin*_expansions}.  For a composition $\beta$,
\begin{equation}\label{HR_to_fshin}
H_{\beta} = \sum_{\alpha} \mathcal{K}^{\fshin}_{\alpha, \beta} \fshin_{\alpha} \text{\quad and \quad} R_{\beta} = \sum_{\alpha} \mathcal{L}^{\fshin}_{\alpha, \beta} \fshin_{\alpha}.
\end{equation}

Like in the dual case, the reverse shin functions are related to the shin functions via the involution $\rho$.

\begin{prop}
    For a composition $\alpha$, we have $$\fshin_{\alpha} = \rho(\shin_{\alpha^r}) \quad \fshin_{\alpha} = \omega(\rshin_{\alpha^r}).$$
\end{prop}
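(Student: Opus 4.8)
The plan is to prove the dual statement $\fshin_{\alpha} = \rho(\shin_{\alpha^r})$ by exploiting the fact that $\rho$ is invariant under the duality pairing between \textit{NSym} and $QSym$, together with the already-established statement (Theorem \ref{rho_on_shin}) that $\rho(\shin^*_{\beta}) = \fshin^*_{\beta^r}$ in $QSym$. The key observation is that since $\rho$ on \textit{NSym} and $\rho$ on $QSym$ are adjoint with respect to $\langle \cdot, \cdot \rangle$, the image $\rho(\shin_{\alpha^r})$ is the unique element of \textit{NSym} dual to $\rho(\shin^*_{\alpha^r}) = \fshin^*_{\alpha}$, which by definition is exactly $\fshin_{\alpha}$.

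First I would verify the adjointness property of $\rho$: for all compositions $\mu$ and $\nu$, $\langle \rho(H_{\mu}), \rho(M_{\nu}) \rangle = \langle H_{\mu}, M_{\nu} \rangle$, or more usefully $\langle \rho(X), Y \rangle = \langle X, \rho(Y) \rangle$ for $X \in$ \textit{NSym} and $Y \in QSym$. This is the standard statement that the involutions are self-adjoint (or mutually adjoint) under the pairing, which follows from their definition on the dual ribbon and fundamental bases: $\langle \rho(R_{\mu}), F_{\nu} \rangle = \langle R_{\mu^r}, F_{\nu} \rangle = \delta_{\mu^r, \nu} = \delta_{\mu, \nu^r} = \langle R_{\mu}, F_{\nu^r} \rangle = \langle R_{\mu}, \rho(F_{\nu}) \rangle$. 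I would note that $\rho$ on \textit{NSym} is an anti-automorphism, but this does not obstruct the pairing identity, which only concerns its action on basis elements.

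With adjointness in hand, the main computation is short. For any composition $\beta$,
$$\langle \rho(\shin_{\alpha^r}), \fshin^*_{\beta} \rangle = \langle \shin_{\alpha^r}, \rho(\fshin^*_{\beta}) \rangle = \langle \shin_{\alpha^r}, \shin^*_{\beta^r} \rangle = \delta_{\alpha^r, \beta^r} = \delta_{\alpha, \beta},$$
where the second equality uses that $\rho$ is an involution applied to Theorem \ref{rho_on_shin} (namely $\rho(\fshin^*_{\beta}) = \rho(\rho(\shin^*_{\beta^r})) = \shin^*_{\beta^r}$), and the third uses that $\shin$ and $\shin^*$ are dual bases. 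This shows $\rho(\shin_{\alpha^r})$ pairs with $\fshin^*_{\beta}$ as the Kronecker delta, so by uniqueness of dual bases (Proposition \ref{hopf_dual}) it equals $\fshin_{\alpha}$. The second identity $\fshin_{\alpha} = \omega(\rshin_{\alpha^r})$ then follows immediately from the factorization $\omega = \rho \circ \psi$ together with Proposition \ref{psi_on_eschur}: $\omega(\rshin_{\alpha^r}) = \rho(\psi(\rshin_{\alpha^r})) = \rho(\shin_{\alpha^r}) = \fshin_{\alpha}$.

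The main obstacle is purely bookkeeping: one must be careful that $\rho$ is an \emph{anti}-automorphism on \textit{NSym}, so although the pairing identity $\langle \rho(X), Y \rangle = \langle X, \rho(Y) \rangle$ holds on basis elements and extends linearly, any argument involving products of shin functions would need to track the reversal of multiplication order. Here, however, the proof never multiplies elements---it works entirely at the level of the bilinear pairing on single basis elements---so the anti-automorphism subtlety does not actually interfere. The only genuine care needed is ensuring the reversal indices $\alpha \leftrightarrow \alpha^r$ are threaded consistently through each application of Theorem \ref{rho_on_shin}.
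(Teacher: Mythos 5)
Your proposal is correct and follows essentially the same route as the paper: establish that $\rho$ respects the duality pairing by checking it on the ribbon/fundamental bases, then combine this with Theorem \ref{rho_on_shin} and uniqueness of dual bases to get $\fshin_{\alpha} = \rho(\shin_{\alpha^r})$, and derive the second identity from $\omega = \rho \circ \psi$ and Proposition \ref{psi_on_eschur}. Your write-up is somewhat more explicit than the paper's (which leaves the final pairing computation implicit), but the argument is the same.
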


\begin{proof}
The map $\rho$ is invariant under duality.  By definition, $ \langle R_{\alpha}, F_{\beta} \rangle  = \delta_{\alpha, \beta} = \langle R_{\alpha^r}, F_{\beta^r} \rangle = \langle \rho(R_{\alpha}), \rho(F_{\beta}) \rangle, $ thus for $G \in QSym$ and $H \in$ \textit{NSym}, $\langle H, G \rangle = \langle \rho(H), \rho(G) \rangle$ due to the bilinearity of the inner product. Then the first part of our claim follows from Theorem \ref{rho_on_shin}. The relationship with the row-strict shin functions follows from $\omega = \rho \circ \psi$ and Proposition \ref{psi_on_eschur}.
\end{proof}

By applying $\rho$, we can translate many of the results on the shin functions to the reverse shin functions.

\begin{thm} For compositions $\alpha, \beta$, a partition $\lambda$, and a positive integer $m$,
    \begin{enumerate}
        \item Left Pieri Rule. $$H_m \fshin_{\alpha} = \sum_{\alpha^r \subset^{\shin}_{m} \beta^r} \fshin_{\beta}.$$
        \item $ \displaystyle H_{\beta} = \sum_{\alpha} \mathcal{K}_{\alpha^r, \beta^r} \fshin_{\alpha} \text{\qquad and \qquad} R_{\beta} = \sum_{\alpha} \mathcal{L}_{\alpha^r, \beta^r} \fshin_{\alpha}.$
        \item $\fshin^*_{\lambda^r} = s_{\lambda}. \text{\quad Also, } \chi(\fshin_{\lambda^r}) = s_{\lambda} \text{  and  } \chi(\fshin_{\alpha})=0 \text{ when $\alpha$ is not weakly increasing.}$\vspace{4mm} 
        \item Let $\gamma$ be a composition such that $\gamma_i > \gamma_{i+1}$ for all $1 \leq i \leq \ell(\gamma)$. Then,
        $$\fshin_{\gamma} = \sum_{\sigma \in S_{\ell(\gamma)}} (-1)^{\sigma} H_{\gamma_{\sigma(1)}} \cdots H_{\gamma_{\sigma(\ell(\gamma))}},$$ where the sum runs over $\sigma \in S_{\ell(\gamma)}$ such that $\sigma(i) \geq i-1$ for all $i \in [\ell(\gamma)]$. \vspace{2mm}
        \item $\displaystyle H_{\beta} = \sum_{\alpha} \mathcal{K}^{\fshin}_{\alpha^r, \beta^r} \shin_{\alpha} \text{\quad and \quad} R_{\beta} = \sum_{\alpha} \mathcal{L}^{\fshin}_{\alpha^r, \beta^r} \shin_{\alpha}.$
    \end{enumerate}
\end{thm}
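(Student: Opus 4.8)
The plan is to derive all five statements by applying the involution $\rho$ (and in some cases $\omega = \rho \circ \psi$) to the corresponding known results about the shin functions, exactly as was done for the row-strict functions in Theorem \ref{rshin_big_thm} via $\psi$. The two facts I will use repeatedly are that $\rho$ is invariant under duality (so $\langle H, G \rangle = \langle \rho(H), \rho(G) \rangle$, established in the previous proposition) and the key identities $\rho(\shin_{\alpha}) = \fshin_{\alpha^r}$, $\rho(\shin^*_{\alpha}) = \fshin^*_{\alpha^r}$, together with the action of $\rho$ on the classical bases, namely $\rho(H_{\alpha}) = H_{\alpha^r}$ and $\rho(R_{\alpha}) = R_{\alpha^r}$. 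The one structural subtlety I must respect throughout is that $\rho$ is an \emph{anti}-automorphism on \textit{NSym}: it reverses products, so $\rho(XY) = \rho(Y)\rho(X)$. This is precisely why a \emph{right} Pieri rule for shin functions turns into a \emph{left} Pieri rule for flipped shin functions.

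For part (1), I would apply $\rho$ to the shin right Pieri rule $\shin_{\alpha} H_r = \sum_{\alpha \subset^{\shin}_r \beta} \shin_{\beta}$. The left side becomes $\rho(H_r)\rho(\shin_{\alpha}) = H_r \fshin_{\alpha^r}$ because $\rho$ reverses the product and fixes $H_r$ (a single part reverses to itself), while the right side becomes $\sum_{\alpha \subset^{\shin}_r \beta} \fshin_{\beta^r}$. Reindexing by $\beta^r$ and relabeling the summation composition so that the condition reads $\alpha^r \subset^{\shin}_m \beta^r$ (matching how $\beta$ is used in the statement) yields the claimed left Pieri rule. For parts (2) and (5), I would apply $\rho$ to the two expansion formulas in Equation \eqref{HR_to_shin} and to Equation \eqref{HR_to_fshin}; since $\rho$ fixes $H_{\beta}$ up to reversal ($\rho(H_{\beta}) = H_{\beta^r}$) and similarly for $R_{\beta}$, and sends $\shin_{\alpha} \mapsto \fshin_{\alpha^r}$, the Kostka-type coefficients $\mathcal{K}_{\alpha,\beta}$ and $\mathcal{L}_{\alpha,\beta}$ simply acquire reversed indices, giving the stated $\mathcal{K}_{\alpha^r, \beta^r}$ and $\mathcal{L}_{\alpha^r, \beta^r}$ forms (part (5) being the analogous mixed statement relating $H_\beta, R_\beta$ back to the ordinary shin basis via the flipped coefficients).

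For part (3), the identity $\fshin^*_{\lambda^r} = s_{\lambda}$ follows directly from Theorem \ref{rho_on_shin}: taking $\alpha = \lambda$ gives $\fshin^*_{\lambda^r} = \rho(\shin^*_{\lambda}) = \rho(s_\lambda) = s_\lambda$, since $s_\lambda \in Sym$ is symmetric and $\rho$ acts trivially on $Sym$ (it restricts to the identity there, as $\rho$ reverses compositions but a symmetric function is invariant). The forgetful-map statement follows by applying $\chi$ and using that $\chi(\fshin_{\alpha}) = \chi(\rho(\shin_{\alpha^r}))$; I would argue via duality that the commutative image picks out exactly the Schur function $s_\lambda$ when $\alpha^r$ is a partition, i.e.\ when $\alpha$ is weakly increasing, and vanishes otherwise, mirroring the computation in Theorem \ref{rshin_big_thm}(4). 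Part (4) is the Jacobi-Trudi rule: I would apply $\rho$ to Theorem \ref{JT_shin}, whose hypothesis is a strictly increasing $\beta$; since $\rho(\shin_\beta) = \fshin_{\beta^r}$ and $\beta^r$ is strictly \emph{decreasing}, this accounts for the reversed monotonicity hypothesis $\gamma_i > \gamma_{i+1}$ in the statement.

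The main obstacle is bookkeeping the reversal of products under the anti-automorphism $\rho$ in part (4). Applying $\rho$ to the sum $\sum_{\sigma} (-1)^\sigma H_{\beta_{\sigma(1)}} \cdots H_{\beta_{\sigma(k)}}$ reverses each monomial to $H_{\beta_{\sigma(k)}} \cdots H_{\beta_{\sigma(1)}}$, and I must verify that after substituting $\gamma = \beta^r$ and re-indexing the permutations (replacing $\sigma$ with the permutation $\sigma'$ given by $\sigma'(i) = k+1-\sigma(k+1-i)$, or a similar conjugation by the longest element), the sign and the constraint set $S_k^{\ge}(-1)$ are exactly preserved. Concretely, I expect that conjugating by the order-reversing permutation sends the condition $\sigma(i) \ge i-1$ to an equivalent condition on $\sigma'$ and leaves the sum over the full constrained set invariant, so that the displayed formula with $H_{\gamma_{\sigma(1)}} \cdots H_{\gamma_{\sigma(\ell(\gamma))}}$ and the same index set emerges cleanly; carefully checking that the permutation set is genuinely stable under this conjugation is the one place where a routine-looking step could hide a real subtlety.
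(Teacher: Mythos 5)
Your overall strategy --- apply $\rho$ to the corresponding shin results while keeping track of the fact that $\rho$ is an anti-automorphism of \textit{NSym} --- is exactly the paper's proof, and your treatments of parts (1), (2), (3), and (5) are correct and, if anything, more careful than the paper's one-line justifications.

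The subtlety you flagged in part (4) is, however, a genuine gap, and your stated expectation there is false: the set $S_k^{\ge}(-1)$ is \emph{not} stable under conjugation by the longest element $w_0$. Writing $\tau = w_0\sigma w_0$, i.e.\ $\tau(i) = k+1-\sigma(k+1-i)$, the condition $\sigma(i)\ge i-1$ for all $i$ translates into $\tau(j)\le j+1$ for all $j$, which defines the set of \emph{inverses} of elements of $S_k^{\ge}(-1)$, not $S_k^{\ge}(-1)$ itself. Already for $k=3$ one has $S_3^{\ge}(-1)=\{123,213,132,312\}$ while $w_0\,S_3^{\ge}(-1)\,w_0=\{123,213,132,231\}$, and because \textit{NSym} is noncommutative the two sums genuinely differ. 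Concretely, applying $\rho$ to $\shin_{(1,3,4)} = H_{(1,3,4)} - H_{(1,4,3)} - H_{(3,1,4)} + H_{(4,1,3)}$ (Example \ref{shin_creation_op_ex}) gives $\fshin_{(4,3,1)} = H_{(4,3,1)} - H_{(3,4,1)} - H_{(4,1,3)} + H_{(3,1,4)}$, whereas the displayed formula in part (4) with $\gamma=(4,3,1)$ produces $H_{(4,3,1)} - H_{(3,4,1)} - H_{(4,1,3)} + H_{(1,4,3)}$; the final terms disagree. So the step cannot be completed as you describe: what applying $\rho$ to Theorem \ref{JT_shin} actually yields is the sum over $\{\sigma\in S_{\ell(\gamma)} : \sigma(i)\le i+1 \text{ for all } i\}$, equivalently over $\sigma$ with $\sigma^{-1}\in S_{\ell(\gamma)}^{\ge}(-1)$. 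The paper's own proof of (4) (``apply $\rho$'') silently skips this re-indexing and its stated index set appears to need the same correction, so you have located a real issue rather than a mere bookkeeping chore --- but your proposal as written asserts the wrong resolution of it.
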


\begin{proof} For a composition $\alpha$, note that $\rho(H_{\alpha}) = H_{\alpha^r}.$ 
 \hfill
   \begin{enumerate}
        \item Apply $\rho$ to Definition \ref{shin_r_pieri}.
        \item Apply $\rho$ to Equation \eqref{HR_to_shin}.
        \item Since $\rho$ restricts to the identity map on the Schur functions, we have $\fshin^*_{\lambda^r} = \rho(\shin^*_{\lambda}) = \rho(s_{\lambda}) = s_{\lambda}$. Because $\chi$ is dual to the inclusion map from $Sym$ to $QSym$ \cite{Camp}, we have \newline $\chi(\fshin_{\alpha}) = \sum_{\lambda} (\text{ coefficient of $\fshin^*_{\alpha}$ in $s_{\lambda}$}) s_{\lambda}$.  Our claim follows.
        \item Apply $\rho$ to  Theorem \ref{JT_shin}.
        \item Apply $\rho$ to Equation \eqref{HR_to_fshin}. \qedhere
    \end{enumerate}
\end{proof}

Next, we define skew reverse extended Schur functions algebraically, and then we define \otherskew\ reverse extended Schur functions algebraically and in terms of tableaux. 

\begin{defn}
    For compositions $\beta \subseteq \alpha$, the \emph{skew reverse extended Schur functions} are defined by $$\fshin^*_{\alpha/\beta} = \fshin^{\perp}_{\beta}(\fshin^*_{\alpha}).$$ 
\end{defn}

As before, $\fshin^*_{\alpha/\beta}$ expands into various bases according to Equation \eqref{NSym_perp}. For compositions $\beta \subseteq \alpha$, 
\begin{equation}\label{fshin_skew_exp}
     \fshin^*_{\alpha/\beta}= \sum_{\gamma} \langle \fshin_{\beta}H_{\gamma}, \fshin^*_{\alpha}  \rangle M_{\gamma} = \sum_{\gamma} \langle \fshin_{\beta}R_{\gamma}, \fshin^*_{\alpha} \rangle F_{\gamma} = \sum_{\gamma} \langle \fshin_{\beta}\fshin_{\gamma}, \fshin^*_{\alpha} \rangle \fshin^*_{\gamma}.
\end{equation}

Like before, the coefficients $\langle \shin_{\beta} \shin_{\gamma}, \shin^*_{\alpha}  \rangle$ also appear when multiplying row-strict shin functions, $$\fshin_{\beta} \fshin_{\gamma} = \sum_{\alpha} \langle \fshin_{\beta} \fshin_{\gamma}, \fshin^*_{\alpha}  \rangle \fshin_{\alpha}.$$

Because $\rho$ is an anti-automorphism in \textit{NSym}, it does not map the skew extended Schur functions to the skew reverse extended Schur functions. Instead, the image is the \otherskew\ reverse extended Schur functions. 

\begin{defn}\label{skewii_reverse}
    For compositions $\alpha$ and $\beta$ where $\beta^r \subseteq \alpha^r$, the \emph{\otherskew\  reverse extended Schur functions} are defined by $$\fshin^*_{\alpha\rskew \beta} = \fshin^{\rperp}_{\beta}(\fshin^*_{\alpha}).$$
\end{defn}

Using Definition \ref{NSym_rperp}, $\fshin^*_{\alpha \rskew \beta}$ expands into various bases as follows. For compositions $\alpha, \beta$ with $\beta^r \subseteq \alpha^r$, 
\begin{equation}\label{rskew_fshin_exp} \fshin^*_{\alpha \rskew \beta} = \sum_{\gamma} \langle H_{\gamma}\fshin_{\beta}, \fshin^*_{\alpha} \rangle M_{\gamma} = \sum_{\gamma} \langle R_{\gamma}\fshin_{\beta}, \fshin^*_{\alpha} \rangle F_{\gamma} = \sum_{\gamma} \langle \fshin_{\gamma} \fshin_{\beta}, \fshin^*_{\alpha} \rangle \fshin^*_{\gamma}.   
\end{equation}

\begin{thm}
    For compositions $\alpha$ and $\beta$ where $\beta \subseteq \alpha$, $$\rho(\shin^*_{\alpha/\beta}) = \fshin^*_{\alpha^r \rskew \beta^r}.$$
\end{thm}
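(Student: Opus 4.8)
The plan is to expand both sides in the appropriate extended Schur bases and match coefficients, exploiting the fact that $\rho$ preserves the inner product while reversing products in \textit{NSym}. First I would write the skew extended Schur function in the extended Schur basis via the last equality of Equation \eqref{prop_shin_skew_coeffs}, namely $\shin^*_{\alpha/\beta} = \sum_{\gamma} \langle \shin_{\beta}\shin_{\gamma}, \shin^*_{\alpha} \rangle \shin^*_{\gamma}$. Applying $\rho$, which is an automorphism of $QSym$, and using $\rho(\shin^*_{\gamma}) = \fshin^*_{\gamma^r}$ from Theorem \ref{rho_on_shin}, gives $\rho(\shin^*_{\alpha/\beta}) = \sum_{\gamma} \langle \shin_{\beta}\shin_{\gamma}, \shin^*_{\alpha} \rangle \fshin^*_{\gamma^r}$.

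The key step is to rewrite the coefficient $\langle \shin_{\beta}\shin_{\gamma}, \shin^*_{\alpha} \rangle$ in terms of flipped shin functions. Since $\rho$ is invariant under duality (as established in the proof of the proposition giving $\fshin_{\alpha} = \rho(\shin_{\alpha^r})$), we have $\langle \shin_{\beta}\shin_{\gamma}, \shin^*_{\alpha} \rangle = \langle \rho(\shin_{\beta}\shin_{\gamma}), \rho(\shin^*_{\alpha}) \rangle$. The crucial subtlety is that $\rho$ is an \emph{anti}-automorphism on \textit{NSym}, so it reverses the order of the product: $\rho(\shin_{\beta}\shin_{\gamma}) = \rho(\shin_{\gamma})\rho(\shin_{\beta}) = \fshin_{\gamma^r}\fshin_{\beta^r}$. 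Combined with $\rho(\shin^*_{\alpha}) = \fshin^*_{\alpha^r}$, this yields $\langle \shin_{\beta}\shin_{\gamma}, \shin^*_{\alpha} \rangle = \langle \fshin_{\gamma^r}\fshin_{\beta^r}, \fshin^*_{\alpha^r} \rangle$.

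Substituting this identity and then reindexing the sum along the involution $\gamma \mapsto \gamma^r$ on compositions (a bijection on compositions of each fixed size) turns $\rho(\shin^*_{\alpha/\beta})$ into $\sum_{\delta} \langle \fshin_{\delta}\fshin_{\beta^r}, \fshin^*_{\alpha^r} \rangle \fshin^*_{\delta}$, which is precisely the expansion of $\fshin^*_{\alpha^r \rskew \beta^r}$ given by the last equality of Equation \eqref{rskew_fshin_exp}. This completes the identification and proves the claim.

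The conceptual heart of the statement, rather than any genuine obstacle, is exactly the order reversal forced by $\rho$ being an anti-automorphism: this is what converts an ordinary skew function (dual to left multiplication, via $\perp$) into a skew-II function (dual to right multiplication, via $\rperp$), and it is the reason the theorem lands on $\rskew$ instead of ordinary skewing. There is no analytic difficulty; the only care needed is consistent bookkeeping of the reversals and checking that the hypothesis $\beta \subseteq \alpha$ correctly supplies the condition $(\beta^r)^r \subseteq (\alpha^r)^r$ under which $\fshin^*_{\alpha^r \rskew \beta^r}$ is defined in Definition \ref{skewii_flipped}.
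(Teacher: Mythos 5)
Your proposal is correct and follows essentially the same route as the paper: expand $\shin^*_{\alpha/\beta}$ in the extended Schur basis, use that $\rho$ preserves the pairing and is an anti-automorphism on \textit{NSym} to rewrite the coefficients as $\langle \fshin_{\gamma^r}\fshin_{\beta^r}, \fshin^*_{\alpha^r}\rangle$, and identify the result with the expansion of $\fshin^*_{\alpha^r \rskew \beta^r}$ from Equation \eqref{rskew_fshin_exp}. Your closing remark correctly identifies the order reversal as the reason the image is a skew-II rather than an ordinary skew function, which is exactly the point the paper makes in the surrounding text.
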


\begin{proof}
First, observe that $$\langle \rho(\shin_{\beta} \shin_{\gamma}), \rho(\shin^*_{\alpha}) \rangle = \langle \rho(\shin_{\gamma}) \rho(\shin_{\beta}), \rho(\shin^*_{\alpha}) \rangle = \langle \fshin_{\gamma^r} \fshin_{\beta^r}, \fshin_{\alpha^r} \rangle,$$ because $\rho$ is invariant under duality and an anti-automorphism in \textit{NSym}. Then, 
$$\rho(\shin^*_{\alpha/\beta}) = \sum_{\gamma} \langle \shin_{\beta}\shin_{\gamma}, \shin^*_{\alpha} \rangle 
 \rho(\shin^*_{\gamma}) = \sum_{\gamma} \langle \shin_{\beta}\shin_{\gamma}, \shin^*_{\alpha} \rangle 
 \fshin^*_{\gamma^r} = \sum_{\gamma} \langle \fshin_{\gamma^r} \fshin_{\beta^r}, \fshin_{\alpha^r} \rangle \fshin^*_{\gamma^r} = \fshin^*_{\alpha^r \rskew \beta^r},$$ by Equation \eqref{rskew_fshin_exp}.
\end{proof}

The first line of the proof above yields the following.

\begin{cor}
    For compositions $\beta$ and $\gamma$, 
    $$\fshin_{\gamma}\fshin_{\beta} = \sum_{\alpha} \mathcal{C}^{\alpha^r}_{\beta^r, \gamma^r} \fshin_{\alpha}.$$
   
\end{cor}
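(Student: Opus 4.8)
The plan is to extract the flipped-shin structure coefficients directly from their shin counterparts by pairing against the dual basis, exactly as the reference to the ``first line'' of the preceding proof suggests. Since $\{\fshin_{\mu}\}_{\mu}$ and $\{\fshin^*_{\mu}\}_{\mu}$ are dual bases, the product expands as $\fshin_{\gamma}\fshin_{\beta} = \sum_{\mu} \langle \fshin_{\gamma}\fshin_{\beta}, \fshin^*_{\mu}\rangle \fshin_{\mu}$, so it suffices to identify each coefficient $\langle \fshin_{\gamma}\fshin_{\beta}, \fshin^*_{\mu}\rangle$ with the appropriate shin structure coefficient.

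First I would record the key identity produced in the previous proof. Because $\rho$ is invariant under duality and is an anti-automorphism on \textit{NSym}, for any compositions $\beta, \gamma, \alpha$ we have
$$\mathcal{C}^{\alpha}_{\beta,\gamma} = \langle \shin_{\beta}\shin_{\gamma}, \shin^*_{\alpha}\rangle = \langle \rho(\shin_{\gamma})\,\rho(\shin_{\beta}), \rho(\shin^*_{\alpha})\rangle = \langle \fshin_{\gamma^r}\fshin_{\beta^r}, \fshin^*_{\alpha^r}\rangle,$$
where the final equality uses $\rho(\shin_{\delta}) = \fshin_{\delta^r}$ on both \textit{NSym} and $QSym$, together with the swap of factors coming from the anti-automorphism property.

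Next I would reindex this identity by substituting $\beta \mapsto \beta^r$, $\gamma \mapsto \gamma^r$, and $\alpha \mapsto \mu^r$. Since reversal is an involution on compositions, this substitution is purely a renaming and yields $\langle \fshin_{\gamma}\fshin_{\beta}, \fshin^*_{\mu}\rangle = \mathcal{C}^{\mu^r}_{\beta^r,\gamma^r}$. Substituting this back into the dual-basis expansion above and renaming the summation index $\mu$ as $\alpha$ gives exactly $\fshin_{\gamma}\fshin_{\beta} = \sum_{\alpha} \mathcal{C}^{\alpha^r}_{\beta^r,\gamma^r} \fshin_{\alpha}$, as claimed.

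The only place requiring care—and the sole obstacle—is the bookkeeping of the reversals. Because $\rho$ is an anti-automorphism, the factors $\shin_{\beta}$ and $\shin_{\gamma}$ exchange order under $\rho$, so the subscripts $\beta^r$ and $\gamma^r$ appear in transposed positions relative to the product on the left-hand side; one must assign the roles of $\beta$ and $\gamma$ consistently so that the resulting coefficient matches the stated $\mathcal{C}^{\alpha^r}_{\beta^r,\gamma^r}$. Beyond this index tracking there is no analytic or combinatorial difficulty.
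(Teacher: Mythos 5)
Your argument is correct and is precisely the paper's own: the corollary is extracted from the identity $\mathcal{C}^{\alpha}_{\beta,\gamma}=\langle \fshin_{\gamma^r}\fshin_{\beta^r},\fshin^*_{\alpha^r}\rangle$ established in the first line of the preceding proof, followed by reindexing under reversal and expanding in the dual basis. The bookkeeping of the reversed, transposed subscripts is handled exactly as the paper intends.
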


The skew-II reverse extended Schur functions are defined combinatorially via a special class of tableaux with \otherskew\ shapes. 

\begin{defn}
    For compositions $\beta = (\beta_1, \ldots, b_{\ell}) $ and $ \alpha = (\alpha_1, \ldots, \alpha_k)$ such that $\beta^r \subseteq \alpha^r$, a \emph{\otherskew\ reverse shin-tableau} of shape $\alpha \rskew \beta$ is a \otherskew\ diagram $\alpha \rskew \beta$ filled with integers such that each row is weakly decreasing left to right, each column is strictly increasing top to bottom, and if $\alpha_{k-i}> \beta_{\ell-i}$ for some $i \geq 0$ then there is no $j>i$ such that $\beta_{\ell-j}> \beta_{\ell - i}$. A \otherskew\  reverse shin-tableau is \emph{standard} if it contains the numbers $1$ through $|\alpha| - |\beta|$ each exactly once. 
\end{defn}

Intuitively, the third condition states that there should never be any boxes directly below a box that has been skewed-out.

\begin{ex}
        The three leftmost diagrams below are examples of  \otherskew\ reverse shin-tableaux while the rightmost diagram is not.

    $$ \text{\otherskew\ reverse shin:\quad}
    \scalebox{.85}{
    \begin{ytableau}
          1 & 1 \\
        *(gray) & 3 & 2\\
        *(gray) &  *(gray)  & 3
    \end{ytableau}
    \qquad 
    \begin{ytableau}
        1 & 1 \\
        *(gray)\\ 
        *(gray) & *(gray) & 1 
    \end{ytableau}
    \qquad
    \begin{ytableau}
        1 & 1\\ 
         *(gray) & *(gray) & 1\\
        *(gray)   
    \end{ytableau}}
     \qquad
     \qquad
     \text{Not:\quad }
     \scalebox{.85}{
     \begin{ytableau}
        1 & 1\\
         *(gray) & *(gray) & 1 \\
        *(gray) & 3 & 2 
     \end{ytableau}}\vspace{2mm}
    $$ The right-most diagram is not a skew-II reverse shin tableau because $\alpha_3 > \beta_2 $ but $\beta_1> \beta_2$. In other words, there is a skewed-out box directly above the normal box containing the $3$, which violates our conditions.
\end{ex}

We can extend the bijection $flip$ to be $$flip: \{\text{skew standard shin-tableaux of shape } \alpha/\beta\} \rightarrow \{ \text{\otherskew\ SFST of shape } \alpha^r \rskew \beta^r\}$$ where, given a skew standard shin-tableau $T$, the \otherskew\ standard reverse shin-tableau $flip(T)$ is obtained by flipping $T$ horizontally (reversing the order of the rows) and replacing each entry $i$ with $|T|+1-i$. Again, by definition of the map, the descent composition of $T$ is the reverse of the descent composition of $flip(T)$. This is because there is a descent at $|T|+1-i$ in $flip(T)$ whenever there is a descent $i$ in $T$. This allows us to connect the skew extended Schur functions with the skew-II reverse extended Schur functions.

\begin{prop}
       For compositions $\alpha$ and $\beta$ where $\beta^r \subseteq \alpha^r$, $$\fshin^*_{\alpha \rskew \beta} = \sum_{T} x^T,$$ where the sum runs over \otherskew\ reverse shin-tableaux $T$ of shape $\alpha \rskew \beta$.
\end{prop}

\begin{proof}
 The skew shin functions can be expressed as $$\shin^*_{\alpha/\beta} = \sum_{S} F_{co_{\shin}(S)},$$ using the same logic as the usual shin functions (also similar to Proposition \ref{fshin*_to_F}). Thus, $$\fshin^*_{\alpha \rskew \beta} = \omega(\shin^*_{\alpha^r/\beta^r}) = \sum_{S}F_{co_{\shin}(S)^r} = \sum_{S} F_{co_{\fshin}(flip(S)} = \sum_{U} F_{co_{\fshin}(U)},$$ where the sums run over skew standard  shin-tableaux $S$ of shape $\alpha^r /\beta^r$ and \otherskew\ standard reverse shin-tableaux $U$ of shape $\alpha \rskew \beta$. From here it is simple to expand the summation into our claim.
\end{proof}

With the notion of \otherskew\ reverse shin-tableaux we now define a reverse analogue to Theorem \ref{shin_right_R_mult}.

\begin{prop}\label{fshin_R_mult}
Left Ribbon Multiplication. $$R_{\beta}\fshin_{\alpha} = \sum_{\gamma \models |\alpha| + |\beta|} \sum_{S} \fshin_{\gamma},$$ where the sum runs over all \otherskew\ standard reverse shin-tableaux $S$ of shape $\gamma/\alpha$ with $co_{\fshin}(U) = \beta$. 
\end{prop}

\begin{proof}
For compositions $\alpha$ and $\beta$, Theorem \ref{shin_right_R_mult} yields $$\shin_{\alpha^r} R_{\beta^r} = \sum_{\gamma \models |\alpha| + |\beta|} \sum_{U} \shin_{\gamma^r},$$ where the sum runs over skew standard shin-tableaux $U$ of shape $\gamma^r/\alpha^r$ with $co_{\shin}(U) = \beta^r$.  Then applying $\rho$ gives $$R_{\beta}\fshin_{\alpha} = \sum_{\gamma \models |\alpha| + |\beta|} \sum_{U} \fshin_{\gamma},$$ where the sum runs over skew standard shin-tableaux $U$ of shape $\gamma^r/\alpha^r$ with $co_{\shin}(U) = \beta^r$. Using the $flip$ bijection, we associate each tableau $U$ with $flip(U)$ which is a \otherskew\ standard reverse shin-tableau of shape $\alpha \rskew \beta$ and descent composition $\beta$. This allows us to rewrite our sum as it is stated in the claim.
\end{proof}

\subsection{Row-strict reverse extended Schur and shin functions}

Let $\alpha$ be a composition and $\beta$ be a weak composition. A \emph{row-strict reverse shin-tableau} (BST) of shape $\alpha$ and type $\beta$ is a filling of the diagram of $\alpha$ with positive integers such that the entries in each row are strictly decreasing from left to right and the entries in each column are weakly increasing from top to bottom where each integer $i$ appears $\beta_i$ times. These are essentially a row-strict version of the reverse shin-tableaux.   A row-strict reverse shin-tableau of shape $\alpha \models n$ is \emph{standard} (SBST) if it includes the entries $1$ through $n$ each exactly once.

\begin{defn}
    For a composition $\alpha$, the \emph{row-strict reverse extended Schur function} is defined as $$\bshin^*_{\alpha} = \sum_T x^T,$$ where the sum runs over all row-strict reverse shin-tableaux $T$ of shape $\alpha$.
\end{defn}

\begin{ex} The function $\bshin^*_{(2,3)}$ and a few row-strict reverse shin-tableaux of shape $(2,3)$ are
    $$ \scalebox{.85}{
    \begin{ytableau}
       2 & 1 \\
       3 & 2 & 1
    \end{ytableau}
    \qquad
    \begin{ytableau}
       3 & 2 \\
       3 & 2 & 1
    \end{ytableau}
    \qquad
    \begin{ytableau}
        4 & 2 \\
        3 & 2 & 1
    \end{ytableau}
    \qquad 
    \begin{ytableau}
        4 & 1 \\
        3 & 2 & 1
    \end{ytableau}
    \qquad
    \begin{ytableau}
        4 & 2 \\
        4 & 2 & 1
    \end{ytableau}
     \qquad
    \begin{ytableau}
        4 & 2 \\
        4 & 3 & 1
    \end{ytableau}}
    $$

  $$\bshin^*_{(2,3)} = x_1^2x_2^2x_3 + x_1x_2^2x_3^2 + x_1x_2^2x_3x_4 + x_1^2x_2x_3x_4 + x_1x_2^2x_4^2 + x_1x_2x_3x_4^2 + \cdots$$
\end{ex}

Let $S$ be a SBST. The \emph{descent set} is defined to be $Des_{\bshin}(S) = \{ i : \text{ $i+1$ is weakly above $i$ in $S$}\}$. Each entry $i$ in $Des_{\bshin}(S)$ is called a \emph{descent} of $S$. The \emph{descent composition} of $S$ is defined to be  $co_{\bshin}(S) = (i_1, i_2 - i_1, \ldots, i_{d}-i_{d-1}, n-i_{d})$ for $Des_{\bshin}(S) = \{i_1, \ldots, i_d\}$. Note that the set of standard row-strict reverse shin-tableaux is exactly the same as the set of standard reverse shin-tableaux.

The \emph{row-strict reverse shin reading word} of a shin-tableau $T$, denoted $rw_{\bshin}(T)$, is the word obtained by reading the rows of $T$ from right to left starting with the top row and moving down.
We can \emph{standardize} a standard row-strict reverse shin-tableau as follows. Given a BST $T$, its \emph{standardization} is the SBST obtained by replacing the $1$'s in $T$ with $1,2,\ldots$ in the order they appear in $rw_{\bshin}(T)$, then the $2$'s continuing with our consecutive integers from before, then $3$'s, etc.

As in Proposition \ref{fshin*_to_F}, one can show that the flat type of any row-strict reverse shin-tableau $T$ that standardizes to a SBST $S$ is a refinement of $co_{\bshin}(S)$. From there, we can group tableaux together based on their types and standardizations to expand a row-strict reverse extended Schur function into the fundamental basis as follows.

\begin{prop}\label{bshin*_to_F}
For a composition $\alpha$, $$\bshin^*_{\alpha} = \sum_S F_{co_{\bshin}(S)},$$ where the sum runs over standard row-strict reverse shin-tableaux $S$.
\end{prop}

\begin{ex} The $F$-expansion of the row-strict reverse extended Schur function $\bshin^*_{(3,2)}$ and standard row-strict reverse shin-tableaux of shape $(3,2)$ are:
$$\bshin^*_{(3,2)} = F_{(1,1,2,1)} + F_{(1,2,2)} \qquad \qquad \scalebox{.75}{\begin{ytableau}
  3 & 2 & 1 \\
  5 & 4
\end{ytableau}
\qquad
\begin{ytableau}
   4 & 2 & 1 \\
   5 & 3
\end{ytableau}}$$
\end{ex}

Let $\mathcal{K}^{\bshin}_{\alpha, \beta}$ be the number of row-strict reverse shin-tableaux of shape $\alpha$ and type $\beta$, and let $\mathcal{L}^{\bshin}_{\alpha, \beta}$ be the number of standard row-strict reverse shin-tableaux with shape $\alpha$ and descent composition $\beta$. The expansions of the row-strict reverse extended Schur functions into the monomial and fundamental bases follow from Proposition \ref{bshin*_to_F}. For a composition $\alpha$,
\begin{equation}\label{bshin*_expansions}
\bshin^*_{\alpha} = \sum_{\beta} \mathcal{K}^{\bshin}_{\alpha, \beta} M_{\beta}  \text{\quad and \quad } \bshin^*_{\alpha} = \sum_{\beta} \mathcal{L}^{\bshin}_{\alpha, \beta} F_{\beta}.\end{equation}

Studying the descent compositions of standard row-strict reverse tableaux allows us to relate the row-strict reverse extended Schur functions to our other bases of $QSym$.

\begin{thm} \label{back_eschur} For a composition $\alpha$, 
    $$\omega(\shin^*_{\alpha}) = \bshin^*_{\alpha^r} \text{\quad and \quad} \rho(\rshin^*_{\alpha}) = \bshin^*_{\alpha^r} \text{\quad and \quad} \psi(\fshin^*_{\alpha}) = \bshin^*_{\alpha}. $$Moreover, $\{\bshin^*_{\alpha}\}_{\alpha}$ is a basis of $QSym$.
\end{thm}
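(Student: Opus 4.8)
The plan is to prove the three displayed identities in a convenient order rather than the order stated: I would first establish the ``base'' relation $\psi(\fshin^*_{\alpha}) = \bshin^*_{\alpha}$ combinatorially, and then obtain $\omega(\shin^*_{\alpha}) = \bshin^*_{\alpha^r}$ and $\rho(\rshin^*_{\alpha}) = \bshin^*_{\alpha^r}$ formally by composing involutions.

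The crucial observation is that, for a fixed shape $\alpha \models n$, the standard flipped shin-tableaux and the standard backward shin-tableaux are literally the same set of fillings (as already noted before Proposition \ref{bshin*_to_F}), but their descent conventions are complementary. Indeed, $i \in Des_{\fshin}(S)$ exactly when $i+1$ sits strictly below $i$, while $i \in Des_{\bshin}(S)$ exactly when $i+1$ sits weakly above $i$; for a filling by the distinct entries $1,\dots,n$ these two conditions are mutually exclusive and together exhaust $i \in [n-1]$. Hence for every such $S$ we have $Des_{\bshin}(S) = [n-1] \setminus Des_{\fshin}(S)$, which translates into the statement $co_{\bshin}(S) = (co_{\fshin}(S))^{c}$ on descent compositions. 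I would record this as the one genuinely substantive step.

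Granting this, I would apply $\psi$ to the $F$-expansion of Proposition \ref{fshin*_to_F} and use $\psi(F_{\beta}) = F_{\beta^{c}}$ to compute $\psi(\fshin^*_{\alpha}) = \sum_{S} F_{(co_{\fshin}(S))^{c}} = \sum_{S} F_{co_{\bshin}(S)}$, which is exactly $\bshin^*_{\alpha}$ by Proposition \ref{bshin*_to_F}; here the sums run over the common set of standard (flipped $=$ backward) shin-tableaux of shape $\alpha$. The remaining two identities then follow from the relations $\omega = \psi \circ \rho = \rho \circ \psi$ among the involutions. Using Theorem \ref{rho_on_shin} I get $\omega(\shin^*_{\alpha}) = \psi(\rho(\shin^*_{\alpha})) = \psi(\fshin^*_{\alpha^r}) = \bshin^*_{\alpha^r}$, and using Proposition \ref{psi_on_eschur} I get $\rho(\rshin^*_{\alpha}) = \rho(\psi(\shin^*_{\alpha})) = \omega(\shin^*_{\alpha}) = \bshin^*_{\alpha^r}$.

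Finally, for the basis claim I would invoke that $\psi$ is an automorphism of $QSym$ and that $\{\fshin^*_{\alpha}\}_{\alpha}$ is a basis (Theorem \ref{rho_on_shin}); since $\bshin^*_{\alpha} = \psi(\fshin^*_{\alpha})$, the set $\{\bshin^*_{\alpha}\}_{\alpha}$ is the image of a basis under an automorphism and is therefore itself a basis. The only place where anything must be checked by hand is the descent-complementarity of the two tableau conventions; everything after that is bookkeeping with the three commuting involutions and their known actions on the shin, row-strict, and flipped families.
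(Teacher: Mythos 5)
Your proposal is correct and follows essentially the same route as the paper: both hinge on the observation that standard flipped and standard backward shin-tableaux coincide as sets with complementary descent sets (so $co_{\bshin}(S)=co_{\fshin}(S)^{c}$), combined with Proposition \ref{psi_on_eschur}, Theorem \ref{rho_on_shin}, and $\omega=\psi\circ\rho$. The only difference is bookkeeping order — you establish $\psi(\fshin^*_{\alpha})=\bshin^*_{\alpha}$ first and derive the $\omega$ identity by composition, whereas the paper computes $\omega(\shin^*_{\alpha})$ directly via $F_{\alpha^{t}}=F_{(\alpha^{r})^{c}}$ and derives the rest.
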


\begin{proof} 
Recall that $\sum_U F_{co_{\shin}(U)^r} = \sum_S F_{co_{\fshin}(S)}$, and that the set of standard reverse shin-tableaux is equivalent to the set of standard row-strict reverse shin-tableaux (of shape $\alpha$).  By the complementary definitions of reverse descents and row-strict reverse descents, $Des_{\fshin}(S)$ is the set complement of $Des_{\bshin}(S)$ meaning that $co_{\fshin}(S)^c = co_{\bshin}(S)$. Combining these two statements, we have shown that 
$$\omega(\shin^*_{\alpha}) = \sum_{U} \omega(F_{co_{\shin}(U)}) = \sum_{U} F_{co_{\shin}(U)^t} = \sum_U F_{((co_{\shin(U)})^r)^c} = \sum_{S} F_{co_{\bshin}(S)},$$ where the sums run over SST $U$ of shape $\alpha$ and standard row-strict reverse shin-tableaux $S$ of shape $\alpha^r$. The rest follows from Proposition \ref{psi_on_eschur} and Theorem \ref{rho_on_shin}, and the fact that $\psi \circ \rho = \omega$.
\end{proof}

\begin{prop}
    The row-strict reverse extended Schur basis is not equivalent to the extended Schur basis, the row-strict extended Schur basis, or the reverse extended Schur basis.
\end{prop}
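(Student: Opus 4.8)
The plan is to test equality on the fundamental basis: two quasisymmetric functions coincide if and only if their $F$-expansions coincide, so it suffices to exhibit a single backward extended Schur function whose $F$-support is realized by no element of each of the three target bases. The central input is a \emph{leading-term} observation for the extended Schur basis: for any composition $\beta$, the fundamental $F_\beta$ occurs in $\shin^*_\beta$. Indeed, filling the diagram of $\beta$ by placing $1,\dots,\beta_1$ across the first row, $\beta_1+1,\dots,\beta_1+\beta_2$ across the second, and so on, produces a standard shin-tableau whose only descents occur at the row ends, so its descent composition is exactly $\beta$. Consequently, if $\shin^*_\beta$ has $F$-support $S$, then $\beta \in S$; this pins the index of any hypothetical matching extended Schur function to the (small, finite) support of the target, after which one checks the finitely many candidates directly.

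The first step is to locate one genuinely distinguishing example, and a caution is in order here: since $\bshin^*_{\alpha^r} = \omega(\shin^*_\alpha)$ by Theorem \ref{back_eschur} and $\omega$ permutes the extended Schur basis in many small degrees, naive candidates coincide across the bases (for instance one computes $\bshin^*_{(3,2)} = \shin^*_{(1,2,2)}$, so $(3,2)$ is useless). The smallest genuinely new function appears in degree $4$: from $\shin^*_{(1,2,1)} = F_{(1,2,1)} + F_{(1,1,2)}$ together with $\omega(F_\gamma) = F_{\gamma^t}$, Theorem \ref{back_eschur} gives
$$\bshin^*_{(1,2,1)} = \omega(\shin^*_{(1,2,1)}) = F_{(2,2)} + F_{(1,3)},$$
which can also be read off directly from the two standard backward shin-tableaux of shape $(1,2,1)$.

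To finish, I would show that $F_{(2,2)} + F_{(1,3)}$ equals no element of any of the three bases, transporting each comparison to a support question about the extended Schur basis via the involutions, which are invariant under duality and permute the fundamental basis. If $\bshin^*_{(1,2,1)} = \shin^*_\beta$, the leading-term fact forces $\beta \in \{(2,2),(1,3)\}$, yet $\shin^*_{(2,2)} = F_{(2,2)} + F_{(1,2,1)}$ and $\shin^*_{(1,3)} = F_{(1,3)}$, neither matching. If $\bshin^*_{(1,2,1)} = \rshin^*_\beta$, applying $\psi$ and Proposition \ref{psi_on_eschur} gives $\shin^*_\beta = F_{(1,2,1)} + F_{(2,1,1)}$, forcing $\beta \in \{(1,2,1),(2,1,1)\}$; but $\shin^*_{(1,2,1)} = F_{(1,2,1)} + F_{(1,1,2)}$ and $\shin^*_{(2,1,1)} = F_{(2,1,1)} + F_{(1,2,1)} + F_{(1,1,2)}$, again no match. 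If $\bshin^*_{(1,2,1)} = \fshin^*_\beta$, applying $\rho$ and Theorem \ref{rho_on_shin} gives $\shin^*_{\beta^r} = F_{(2,2)} + F_{(3,1)}$, forcing $\beta^r \in \{(2,2),(3,1)\}$; since $\shin^*_{(3,1)} = F_{(3,1)} + F_{(2,2)} + F_{(1,3)}$ has three terms and $\shin^*_{(2,2)}$ is as above, there is no match.

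The main obstacle is precisely the search in the second step: the identity $\bshin^*_{\alpha^r} = \omega(\shin^*_\alpha)$ makes the backward basis agree with the extended Schur basis on a large set of indices, so disagreement only surfaces once the $F$-support of $\shin^*_\alpha$ fails to be preserved under the transpose in the relevant sense. Once a suitable $\alpha$ is identified, everything reduces to the finite leading-term bookkeeping above, run in parallel for $\shin^*$, $\rshin^*$, and $\fshin^*$ through $\psi$, $\rho$, and $\omega$.
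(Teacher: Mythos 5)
Your proof is correct and follows essentially the same strategy as the paper: exhibit an explicit backward extended Schur function and rule out every candidate in each target basis by comparing $F$-expansions, and indeed you use the same key example $\bshin^*_{(1,2,1)} = F_{(2,2)}+F_{(1,3)}$ that the paper deploys against the extended Schur and flipped bases (the paper handles the row-strict case with the separate example $\bshin^*_{(2,1)}=F_{(1,2)}$). Your leading-term observation that $F_\beta$ always occurs in $\shin^*_\beta$, together with transporting all three comparisons to the $\shin^*$ basis via $\psi$, $\rho$, and $\omega$, is a tidy and rigorous variation on the paper's direct counts of standard tableaux of each flavor, but the underlying argument is the same.
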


\begin{proof}
    $\bshin^*_{2,1} = F_{1,2}$ and it follows from properties of extended Schur functions that $\rshin^*_{\beta} = F_{\beta}$ if and only if $\beta = (m, 1^k)$ for integers $m \geq 1$ and $k \geq 0$. Thus, there is no $\beta$ such that $\rshin^*_{\beta} = \bshin^*_{2,1}$.

    $\bshin^*_{(1,2,1)} = F_{(2,2)}+F_{(1,3)}$ and the only $\beta$ such that there exist standard shin-tableaux with descent composition $(2,2)$ and $(1,3)$ is $\beta = (3,1)$. However, $\shin^*_{(3,1)} = F_{(2,2)} + F_{(1,3)} + F_{(3,1)}$ and so there is no $\beta$ such that $\shin^*_{\beta} = \bshin^*_{(1,2,1)}$. Further, the only $\beta$ such that there exist standard reverse shin-tableaux with descent composition $(2,2)$ and $(1,3)$ is also $\beta = (3,1)$. However, $\fshin^*_{(3,1)} = F_{(2,2)} + F_{(1,3)} + F_{(3,1)} \not= \bshin^*_{(1,2,1)}$. There is no $\beta$ such that $\fshin^*_{\beta}=\bshin^*_{(1,2,1)}$. 
\end{proof}

Next, we consider the basis of \textit{NSym} that is dual to the row-strict reverse extended Schur functions and its relationship with the shin functions.

\begin{defn}
   Define the \emph{row-strict reverse shin basis} $\{ \bshin_{\alpha}\}_{\alpha}$ as the unique basis of \textit{NSym} that is dual to the row-strict reverse extended Schur basis. Equivalently, $\langle \bshin_{\alpha}, \bshin^*_{\beta} \rangle = \delta_{\alpha, \beta}$ for all compositions $\alpha$ and $\beta$. 
\end{defn}

The expansions of the complete homogeneous functions and the ribbon functions of \textit{NSym} into the row-strict reverse shin basis are dual to those in Equation \eqref{bshin*_expansions}.
For a composition $\beta$,
\begin{equation}\label{HR_to_bshin}
    H_{\beta} = \sum_{\alpha} \mathcal{K}^{\bshin}_{\alpha, \beta} \bshin_{\alpha} \text{\quad and \quad} R_{\beta} = \sum_{\alpha} \mathcal{L}^{\bshin}_{\alpha, \beta} \bshin_{\alpha}.
\end{equation}

Like in the dual case, the row-strict reverse shin functions are related to the shin functions via the involution $\omega$.

\begin{prop}\label{back_shin_omega}  For a composition $\alpha$, we have $$\bshin_{\alpha} = \omega(\shin_{\alpha^r})\text{\quad and \quad} \bshin_{\alpha} = \rho(\rshin_{\alpha^r}) \text{\quad and \quad} \bshin_{\alpha} = \psi(\fshin_{\alpha}).$$
\end{prop}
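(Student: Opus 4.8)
The plan is to dualize Theorem \ref{back_eschur}, exactly as the flipped shin functions were obtained by dualizing Theorem \ref{rho_on_shin}. The single fact that drives everything is that each of $\psi$, $\rho$, $\omega$ is \emph{invariant under duality}: for $H \in$ \textit{NSym}, $G \in QSym$, and any $\phi \in \{\psi, \rho, \omega\}$, one has $\langle H, G \rangle = \langle \phi(H), \phi(G) \rangle$. First I would record this by checking it on the dual pair $\{R_\alpha\}$, $\{F_\beta\}$: since $\langle R_\alpha, F_\beta\rangle = \delta_{\alpha,\beta}$ and each $\phi$ acts on compositions by the corresponding involution (complement, reverse, or transpose), we get $\langle \phi(R_\alpha), \phi(F_\beta)\rangle = \delta_{\phi(\alpha), \phi(\beta)} = \delta_{\alpha, \beta}$, and bilinearity extends this to all of \textit{NSym} and $QSym$.

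Next I would prove the central equality $\bshin_\alpha = \omega(\shin_{\alpha^r})$. Since $\{\bshin_\alpha\}$ is defined as the unique basis dual to $\{\bshin^*_\alpha\}$, and since $\omega$ is a linear bijection (so $\{\omega(\shin_{\alpha^r})\}_\alpha$ is again a basis), it suffices to verify the Kronecker pairing $\langle \omega(\shin_{\alpha^r}), \bshin^*_\beta\rangle = \delta_{\alpha,\beta}$. Using Theorem \ref{back_eschur} in the form $\bshin^*_\beta = \omega(\shin^*_{\beta^r})$ and then duality invariance of $\omega$, the computation collapses:
\[
\langle \omega(\shin_{\alpha^r}), \bshin^*_\beta\rangle = \langle \omega(\shin_{\alpha^r}), \omega(\shin^*_{\beta^r})\rangle = \langle \shin_{\alpha^r}, \shin^*_{\beta^r}\rangle = \delta_{\alpha^r,\beta^r} = \delta_{\alpha,\beta}.
\]
By uniqueness of the dual basis this yields $\bshin_\alpha = \omega(\shin_{\alpha^r})$.

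Finally I would obtain the remaining two identities by composing involutions rather than repeating the pairing argument. Using $\omega = \rho \circ \psi$ together with $\psi(\shin_{\alpha^r}) = \rshin_{\alpha^r}$ from Proposition \ref{psi_on_eschur} gives $\rho(\rshin_{\alpha^r}) = \rho(\psi(\shin_{\alpha^r})) = \omega(\shin_{\alpha^r}) = \bshin_\alpha$; and using $\omega = \psi \circ \rho$ together with the flipped-shin relation $\fshin_\alpha = \rho(\shin_{\alpha^r})$ established above gives $\psi(\fshin_\alpha) = \psi(\rho(\shin_{\alpha^r})) = \omega(\shin_{\alpha^r}) = \bshin_\alpha$. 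The only point demanding care is that $\rho$ and $\omega$ are \emph{anti}-automorphisms on \textit{NSym}, so one might fear the dualization breaks; but duality invariance is a statement purely about the bilinear pairing (verified on $R$ and $F$), independent of multiplicative structure, so the anti-automorphism property is harmless here. This is the subtlety I would flag, though I expect it to pose no genuine obstacle.
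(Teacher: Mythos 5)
Your proof is correct and follows essentially the same route as the paper: verify that $\omega$ (and likewise $\rho$, $\psi$) preserves the pairing by checking it on the ribbon/fundamental dual pair, then use Theorem \ref{back_eschur} and the uniqueness of the dual basis to conclude $\bshin_{\alpha} = \omega(\shin_{\alpha^r})$. Your derivation of the remaining two identities by composing involutions ($\omega = \rho\circ\psi = \psi\circ\rho$) is a harmless and slightly tidier variant of the paper's ``the rest follows similarly,'' and your remark that duality invariance concerns only the pairing, not the (anti-)multiplicativity, is exactly the right point to flag.
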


\begin{proof}
Observe that $$\langle R_{\alpha}, F_{\beta} \rangle = \langle R_{\alpha^t}, F_{\beta^t} \rangle = \langle \omega(R_{\alpha}), \omega(F_{\beta})).$$  This property expands to other bases by linearity. For instance,  
$$\langle \shin_{\alpha^r}, \shin^*_{\beta^r} \rangle = \langle \omega(\shin_{\alpha^r}), \omega(\shin^*_{\beta^r}) \rangle = \langle \omega(\shin_{\alpha^r}), \bshin^*_{\beta} \rangle,$$ for any compositions $\alpha$ and $\beta$. It follows by definition then that $\bshin_{\beta} = \omega(\shin_{\beta^r})$. The rest follows similarly from the invariance of $\rho$ and $\psi$ under duality.
\end{proof}

Now, we apply $\omega$ to the various results on the shin and extended Schur bases to find analogous results on the row-strict reverse shin and row-strict reverse extended Schur bases. 

\begin{thm} For compositions $\alpha$, $\beta$, a partition $\lambda$, and a positive integer $m$,
    \begin{enumerate}
        \item Left Pieri Rule. $$E_m \bshin_{\alpha} = \sum_{ \alpha^r \subset^{\shin}_m \beta^r} \bshin_{\beta}.$$
        \item $ \displaystyle E_{\beta} = \sum_{\alpha} \mathcal{K}_{\alpha^r,\beta^r} \bshin_{\alpha} \text{\qquad and \qquad} R_{\beta} = \sum_{\alpha} \mathcal{L}_{\alpha^r, \beta^t} \bshin_{\alpha}.$
        \item $\bshin^*_{\lambda} = s_{\lambda'}. \text{   Also, } \chi(\bshin_{\lambda})= s_{\lambda'} \text{ and } \chi(\bshin_{\alpha})=0  \text{ when $\alpha$ is not a partition.}$\vspace{4mm}
        \item Let $\gamma$ be a composition such that $\gamma_i > \gamma_{i+1}$. Then, 
        $$\bshin_{\gamma} = \sum (-1)^{\sigma} E_{\gamma_{\sigma(1)}}E_{\gamma_{\sigma(2)}} \cdots E_{\gamma_{\sigma(\ell(\gamma))}},$$ where the sum runs over $\sigma \in S_{\ell(\gamma)}$ such that $\sigma(i) \geq i-1$ for all $i \in [\ell(\gamma)]$. \vspace{2mm}
        \item $\displaystyle E_{\beta} = \sum_{\alpha} \mathcal{K}^{\bshin}_{\alpha^r, \beta^r} \shin_{\alpha} \text{\quad and \quad} R_{\beta} = \sum_{\alpha} \mathcal{L}^{\bshin}_{\alpha^r, \beta^t} \shin_{\alpha}.$
    \end{enumerate}
\end{thm}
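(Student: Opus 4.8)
The plan is to obtain every item as the image under $\omega$ of the corresponding statement for the shin basis, exactly as the flipped theorem was deduced by applying $\rho$. At the outset I would record four facts: $\omega$ is an anti-automorphism of \textit{NSym}; $\omega(H_\alpha)=E_{\alpha^r}$ (so in particular $\omega(H_m)=E_m$) while $\omega(R_\alpha)=R_{\alpha^t}$; by Proposition \ref{back_shin_omega} we have $\bshin_\alpha=\omega(\shin_{\alpha^r})$, equivalently $\omega(\shin_\beta)=\bshin_{\beta^r}$, and since $\omega^2=\mathrm{id}$ also $\omega(\bshin_\alpha)=\shin_{\alpha^r}$; and $\chi\circ\omega=\omega\circ\chi$, which one checks on $H_\alpha$ from $\chi(E_{\alpha^r})=e_{sort(\alpha)}=\omega(\chi(H_\alpha))$. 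The recurring mechanism is that, because $\omega$ reverses products, right multiplication in \textit{NSym} is sent to left multiplication, which is what converts the shin \emph{right} Pieri and \emph{right} ribbon rules into the backward \emph{left} rules.

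For (1) I would apply $\omega$ to Definition \ref{shin_r_pieri} for $\shin_{\alpha^r}$: the left side becomes $\omega(\shin_{\alpha^r}H_m)=E_m\,\bshin_\alpha$, and applying $\omega$ termwise on the right and reindexing $\beta\mapsto\beta^r$ gives the left Pieri rule. Items (2) and (5) follow by applying $\omega$ to Equations \eqref{HR_to_shin} and \eqref{HR_to_bshin}; here the asymmetry between $\omega(H)=E_{(\cdot)^r}$ and $\omega(R)=R_{(\cdot)^t}$ is precisely what produces the reverse $\beta^r$ on the $\mathcal K$-coefficient but the transpose $\beta^t$ on the $\mathcal L$-coefficient after the substitutions $\alpha\mapsto\alpha^r$ and $\beta\mapsto\beta^r$ (resp. $\beta\mapsto\beta^t$), with (5) using $\omega(\bshin_\alpha)=\shin_{\alpha^r}$ to land back in the ordinary shin basis. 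For (3) I would follow the template of the row-strict and flipped cases: combine Theorem \ref{back_eschur} ($\omega(\shin^*_\alpha)=\bshin^*_{\alpha^r}$) with the fact that $\omega$ restricts to the classical involution on $Sym$ and $\shin^*_\lambda=s_\lambda$ to get the Schur identity, then read off the commutative image using that $\chi$ is dual to the inclusion $Sym\hookrightarrow QSym$. Throughout (3) one must track the reverse carefully, since the Schur-function identity naturally attaches to the index $\lambda^r$.

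The one item needing genuine care, and the place I expect the real obstacle, is the Jacobi--Trudi rule (4). Starting from Theorem \ref{JT_shin} for the increasing composition $\gamma^r$ and applying $\omega$, each word $H_{(\gamma^r)_{\sigma(1)}}\cdots H_{(\gamma^r)_{\sigma(k)}}$ is reversed and sent to $E_{(\gamma^r)_{\sigma(k)}}\cdots E_{(\gamma^r)_{\sigma(1)}}$. Re-expressing the indices in terms of $\gamma$ rather than $\gamma^r$ replaces each summation variable $\sigma$ by its conjugate $w_0\sigma w_0$, where $w_0(i)=k+1-i$ is the longest element. The sign is unaffected, since conjugation preserves parity and hence $(-1)^{w_0\sigma w_0}=(-1)^\sigma$; the delicate point is the inequality cutting out the index set. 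Conjugation by $w_0$ carries the constraint $\sigma(i)\ge i-1$ to $\sigma(i)\le i+1$, and these two sets genuinely differ starting at length three (a direct $k=3$ check with $\gamma_1>\gamma_2>\gamma_3$ shows the $\omega$-image contains $+E_{\gamma_2}E_{\gamma_3}E_{\gamma_1}$, which is the term indexed by $\sigma=231$ satisfying $\sigma(i)\le i+1$, not the term $E_{\gamma_3}E_{\gamma_1}E_{\gamma_2}$ from $\sigma=312$). I would therefore verify the permutation bookkeeping explicitly and state the constraint as $\sigma(i)\le i+1$, rather than transport the increasing-case inequality verbatim.
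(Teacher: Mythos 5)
Your proposal follows the same route as the paper: every item is obtained by applying $\omega=\rho\circ\psi$ to the corresponding shin statement (Definition \ref{shin_r_pieri}, Equations \eqref{HR_to_shin} and \eqref{HR_to_bshin}, Theorem \ref{JT_shin}), using $\omega(H_\alpha)=E_{\alpha^r}$, $\omega(R_\alpha)=R_{\alpha^t}$, $\bshin_\alpha=\omega(\shin_{\alpha^r})$, and the anti-automorphism property to convert right multiplication into left multiplication. Items (1), (2), and (5) match the paper's one-line proofs, and your remark that the reverse must be tracked in (3) is warranted: the derivation yields $\bshin^*_{\lambda^r}=s_{\lambda'}$ and $\chi(\bshin_{\lambda^r})=s_{\lambda'}$, parallel to the flipped case, rather than the index $\lambda$ that appears in the statement.

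Your analysis of item (4) is the substantive point, and it is correct where the paper's printed index set is not. Writing $\tau=w_0\sigma w_0$ for the permutation indexing the $\omega$-image of $H_{(\gamma^r)_{\sigma(1)}}\cdots H_{(\gamma^r)_{\sigma(k)}}$, the constraint $\sigma(i)\ge i-1$ is carried to $\tau(i)\le i+1$, and these sets differ for $k\ge 3$. Concretely, for $\gamma=(3,2,1)$ Theorem \ref{JT_shin} gives $\shin_{(1,2,3)}=H_{(1,2,3)}-H_{(1,3,2)}-H_{(2,1,3)}+H_{(3,1,2)}$, so
\begin{equation*}
\bshin_{(3,2,1)}=\omega\bigl(\shin_{(1,2,3)}\bigr)=E_{(3,2,1)}-E_{(2,3,1)}-E_{(3,1,2)}+E_{(2,1,3)},
\end{equation*}
whose final term is $E_{\gamma_2}E_{\gamma_3}E_{\gamma_1}$, indexed by $\tau=231$, which satisfies $\tau(i)\le i+1$ but fails $\tau(3)\ge 2$; the formula as printed would instead contribute $+E_{\gamma_3}E_{\gamma_1}E_{\gamma_2}=E_{(1,3,2)}$ from $\sigma=312$. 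Since the $E_\alpha$ form a basis, these are genuinely different elements, so the index set in (4) must be $\{\sigma:\sigma(i)\le i+1 \text{ for all } i\}$ as you propose, and your sign bookkeeping is fine because conjugation by $w_0$ preserves parity. The same correction is needed in item (4) of the flipped theorem, which is likewise obtained by an anti-automorphism ($\rho$); only the row-strict version, which uses the automorphism $\psi$ and does not reverse products, retains the original constraint.
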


\begin{proof} For a composition $\alpha$, note that $\omega(H_{\alpha}) = E_{\alpha^r}.$ \hfill
   \begin{enumerate}
        \item Apply $\omega = \rho \circ \psi$ to Definition \ref{shin_r_pieri}.
        \item Apply $\omega = \rho \circ \psi$ to Equation \eqref{HR_to_shin}.
        \item Since $\omega$ restricts to classic $\omega$ map on the Schur functions, we have $\bshin^*_{\lambda^r} = \omega(\shin^*_{\lambda}) = \omega(s_{\lambda}) = s_{\lambda'}$. Because $\chi$ is dual to the inclusion map from $Sym$ to $QSym$ \cite{Camp}, we have \newline $\chi(\bshin_{\alpha}) = \sum_{\lambda} (\text{ coefficient of $\bshin^*_{\alpha}$} \text{ in $s_{\lambda}$}) s_{\lambda}$.  Our claim follows.
        \item Apply $\omega = \rho \circ \psi$ to  Theorem \ref{JT_shin}.
        \item Apply $\omega = \rho \circ \psi$ to Equation \eqref{HR_to_bshin}. \qedhere
    \end{enumerate}
\end{proof}

Another important part of the Hopf algebra structures of $QSym$ and \textit{NSym} are the \emph{antipodes}. The antipode of a Hopf algebra $\mathcal{A}$ is the unique anti-endomorphism that is a two-sided inverse under the convolution product with the identity map on $\mathcal{A}$.  For more details, see \cite{grinberg}.  We write the antipode of \textit{NSym} as $S:$ \textit{NSym} $\rightarrow$ \textit{NSym} and the antipode of $QSym$ as $S^*: QSym \rightarrow QSym$. Formulas for the antipodes of $QSym$ and \textit{NSym} on the fundamental and ribbon bases were given by Malvenuto and Reutenauer in \cite{MalRet} and Benedetti and Sagan in \cite{Ben}, respectively. For any composition $\alpha$, \begin{equation}
     S^*(F_{\alpha}) = (-1)^{|\alpha|}F_{\alpha^t} \text{\quad and \quad} S(R_{\alpha}) = (-1)^{|\alpha|} R_{\alpha^t}.
\end{equation}
Formulas for the antipodes of other Schur-like bases, especially the immaculate basis, have been studied in various papers \cite{Ben, campbell_antipode}.  An open problem is to describe $S(\shin_{\alpha})$ and $S^*(\shin^*_{\alpha})$ in terms of the shin and extended Schur bases respectively.  Combining Theorem \ref{back_eschur} and Proposition \ref{back_shin_omega} allows us to express the antipode on these bases in terms of the row-strict reverse extended Schur basis and the row-strict reverse shin basis.

\begin{cor} For a composition $\alpha$, $$S(\shin_{\alpha}) = (-1)^{|\alpha|} \bshin_{\alpha^r} \text{\quad and \quad} S^*(\shin^*_{\alpha}) = (-1)^{|\alpha|} \bshin^*_{\alpha^r}.$$
\end{cor}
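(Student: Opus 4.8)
The plan is to recognize that each antipode coincides, up to a degree-dependent sign, with the involution $\omega$, thereby reducing the statement to the already-established images of the shin and extended Schur bases under $\omega$. First I would record the two antipode formulas on the fundamental and ribbon bases alongside the definitions $\omega(F_\alpha) = F_{\alpha^t}$ and $\omega(R_\alpha) = R_{\alpha^t}$. Comparing them gives $S^*(F_\alpha) = (-1)^{|\alpha|}F_{\alpha^t} = (-1)^{|\alpha|}\omega(F_\alpha)$ and $S(R_\alpha) = (-1)^{|\alpha|}R_{\alpha^t} = (-1)^{|\alpha|}\omega(R_\alpha)$. Since the sign $(-1)^{|\alpha|}$ depends only on the degree $|\alpha|$, which is constant on each graded component, and since $S^*$ and $\omega$ are linear on $QSym$ (respectively $S$ and $\omega$ linear on \textit{NSym}), the assignment $f \mapsto (-1)^{\deg f}\omega(f)$ is a well-defined linear map that agrees with $S^*$ on the fundamental basis; hence $S^*(f) = (-1)^{\deg f}\omega(f)$ for every homogeneous $f \in QSym$, and dually $S(g) = (-1)^{\deg g}\omega(g)$ for every homogeneous $g \in \textit{NSym}$.

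With this identity in hand, the rest is immediate. For the $QSym$ statement, $\shin^*_\alpha$ is homogeneous of degree $|\alpha|$, so $S^*(\shin^*_\alpha) = (-1)^{|\alpha|}\omega(\shin^*_\alpha)$, and Theorem \ref{back_eschur} gives $\omega(\shin^*_\alpha) = \bshin^*_{\alpha^r}$, yielding $S^*(\shin^*_\alpha) = (-1)^{|\alpha|}\bshin^*_{\alpha^r}$. For the \textit{NSym} statement, $\shin_\alpha$ is homogeneous of degree $|\alpha|$, so $S(\shin_\alpha) = (-1)^{|\alpha|}\omega(\shin_\alpha)$; Proposition \ref{back_shin_omega}, rewritten as $\omega(\shin_\alpha) = \bshin_{\alpha^r}$ by substituting $\alpha \mapsto \alpha^r$ and using $(\alpha^r)^r = \alpha$, then gives $S(\shin_\alpha) = (-1)^{|\alpha|}\bshin_{\alpha^r}$.

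There is no substantial obstacle here: the only point requiring a moment's care is the passage from ``agreement on a basis up to a degree-dependent sign'' to ``equality of the scaled maps,'' which is legitimate precisely because the sign is constant on each homogeneous component, so $f \mapsto (-1)^{\deg f}\omega(f)$ is genuinely linear. I would also double-check the direction of the reverse in Proposition \ref{back_shin_omega}, which is stated as $\bshin_\alpha = \omega(\shin_{\alpha^r})$; replacing $\alpha$ by $\alpha^r$ produces the form $\omega(\shin_\alpha) = \bshin_{\alpha^r}$ used above. Notably, no homomorphism or anti-homomorphism property of the antipodes is needed, only their linearity and their values on the fundamental and ribbon bases.
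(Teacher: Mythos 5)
Your proposal is correct and matches the paper's intended argument: the paper derives this corollary precisely by observing that the antipode formulas on the fundamental and ribbon bases show $S^*$ and $S$ agree with $(-1)^{|\alpha|}\omega$ degreewise, and then invoking Theorem \ref{back_eschur} and Proposition \ref{back_shin_omega}. Your care about linearity of the sign-twisted map on homogeneous components and about the direction of the reverse in $\bshin_{\alpha}=\omega(\shin_{\alpha^r})$ is exactly the right bookkeeping.
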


This result reduces the problem to studying the expansion of the row-strict reverse extended Schur functions into the extended Schur functions and vice versa, which may be interesting to approach using tableaux combinatorics.

Finally, we introduce and study skew and \otherskew\ row-strict reverse extended Schur functions.

\begin{defn}
    For compositions $\beta \subseteq \alpha$, the \emph{skew row-strict reverse extended Schur functions} are defined by $$\bshin^*_{\alpha/\beta} = \bshin^{\perp}_{\beta}(\bshin^*_{\alpha}).$$ 
\end{defn}

$\bshin^*_{\alpha/\beta}$ expands into various bases according to Equation \eqref{NSym_perp}. For compositions $\beta \subseteq \alpha$, 
\begin{equation}\label{bshin_skew_exp}
     \bshin^*_{\alpha/\beta}= \sum_{\gamma} \langle \bshin_{\beta}H_{\gamma}, \bshin^*_{\alpha}  \rangle M_{\gamma} = \sum_{\gamma} \langle \bshin_{\beta}R_{\gamma}, \bshin^*_{\alpha} \rangle F_{\gamma} = \sum_{\gamma} \langle \bshin_{\beta}\bshin_{\gamma}, \bshin^*_{\alpha} \rangle \bshin^*_{\gamma}.
\end{equation}

Like before, the coefficients $\langle \shin_{\beta} \shin_{\gamma}, \shin^*_{\alpha}  \rangle$ also appear when multiplying row-strict reverse shin functions, $$\bshin_{\beta} \bshin_{\gamma} = \sum_{\alpha} \langle \bshin_{\beta} \bshin_{\gamma}, \bshin^*_{\alpha}  \rangle \bshin_{\alpha}.$$

Because $\omega$ is an anti-automorphism in \textit{NSym}, it does not map the skew extended Schur functions to the skew row-strict reverse extended Schur functions. Again, we use the right-perp operator to define functions that we show are the image of the skew extended Schur functions under $\omega$.

\begin{defn}\label{skewii_row-strict reverse}
    For compositions $\alpha$ and $\beta$ where $\beta^r \subseteq \alpha^r$, the \emph{\otherskew\  row-strict reverse extended Schur functions} are defined by $$\bshin^*_{\alpha\rskew \beta} = \bshin^{\rperp}_{\beta}(\bshin^*_{\alpha}).$$
\end{defn}

Using Definition \ref{NSym_rperp}, $\bshin^*_{\alpha \rskew \beta}$ expands into various bases as follows. For compositions $\alpha, \beta$ with $\beta^r \subseteq \alpha^r$, 
\begin{equation}\label{rskew_bshin_exp} \bshin^*_{\alpha \rskew \beta} = \sum_{\gamma} \langle H_{\gamma}\bshin_{\beta}, \bshin^*_{\alpha} \rangle M_{\gamma} = \sum_{\gamma} \langle R_{\gamma}\bshin_{\beta}, \bshin^*_{\alpha} \rangle F_{\gamma} = \sum_{\gamma} \langle \bshin_{\gamma} \bshin_{\beta}, \bshin^*_{\alpha} \rangle \bshin^*_{\gamma}.   
\end{equation}

\begin{thm}\label{omega_on_bshinskew}
    For compositions $\alpha$ and $\beta$ where $\beta \subseteq \alpha$, $$\omega(\shin^*_{\alpha/\beta}) = \bshin^*_{\alpha^r \rskew \beta^r}.$$
\end{thm}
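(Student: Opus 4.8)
The plan is to follow the same strategy as the proof of the flipped analogue $\rho(\shin^*_{\alpha/\beta}) = \fshin^*_{\alpha^r \rskew \beta^r}$, replacing $\rho$ by $\omega$ throughout. The two structural facts that drive the argument are that $\omega$ is an \emph{anti}-automorphism of \textit{NSym} (so it reverses the order of a product, $\omega(xy) = \omega(y)\omega(x)$) and that $\omega$ is invariant under the duality pairing, $\langle H, G \rangle = \langle \omega(H), \omega(G) \rangle$ for $H \in \textit{NSym}$ and $G \in QSym$. Both were established in the course of proving Proposition \ref{back_shin_omega}; together with $\omega(\shin^*_\gamma) = \bshin^*_{\gamma^r}$ (Theorem \ref{back_eschur}) and $\omega(\shin_\gamma) = \bshin_{\gamma^r}$ (Proposition \ref{back_shin_omega}), they are exactly what is needed.

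First I would start from the $\shin^*$-expansion of the skew extended Schur function in Equation \eqref{prop_shin_skew_coeffs}, namely $\shin^*_{\alpha/\beta} = \sum_\gamma \langle \shin_\beta \shin_\gamma, \shin^*_\alpha\rangle \shin^*_\gamma$, and apply $\omega$ linearly. On the basis elements this gives $\omega(\shin^*_\gamma) = \bshin^*_{\gamma^r}$, so
$$\omega(\shin^*_{\alpha/\beta}) = \sum_\gamma \langle \shin_\beta \shin_\gamma, \shin^*_\alpha\rangle \bshin^*_{\gamma^r}.$$
Next I would rewrite the scalar coefficient. Using duality invariance and then the anti-automorphism property, $\langle \shin_\beta \shin_\gamma, \shin^*_\alpha\rangle = \langle \omega(\shin_\beta \shin_\gamma), \omega(\shin^*_\alpha)\rangle = \langle \omega(\shin_\gamma)\omega(\shin_\beta), \bshin^*_{\alpha^r}\rangle = \langle \bshin_{\gamma^r}\bshin_{\beta^r}, \bshin^*_{\alpha^r}\rangle$. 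Substituting this in and reindexing the sum via the reversal bijection $\gamma \mapsto \gamma^r$ (which permutes all compositions) turns the expression into $\sum_\delta \langle \bshin_\delta \bshin_{\beta^r}, \bshin^*_{\alpha^r}\rangle \bshin^*_\delta$, which is precisely $\bshin^*_{\alpha^r \rskew \beta^r}$ by Equation \eqref{rskew_bshin_exp}.

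The one point requiring care — and the conceptual heart of why the output is a skew-II function rather than an ordinary skew function — is the order reversal produced by the anti-automorphism. Because $\omega$ sends $\shin_\beta \shin_\gamma$ to $\bshin_{\gamma^r}\bshin_{\beta^r}$, the factor $\bshin_{\beta^r}$ lands on the \emph{right}, so the coefficient matches the right-multiplication/right-perp expansion \eqref{rskew_bshin_exp} defining $\bshin^*_{\alpha^r \rskew \beta^r}$ rather than the left-perp expansion defining an ordinary skew function; this is the same phenomenon seen in the flipped case. Everything else is formal bookkeeping, and, exactly as in the flipped case, the coefficient identity obtained in the second step also immediately yields the backward structure-coefficient relation $\bshin_\gamma \bshin_\beta = \sum_\alpha \mathcal{C}^{\alpha^r}_{\beta^r, \gamma^r}\bshin_\alpha$ as a corollary.
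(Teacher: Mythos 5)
Your proposal is correct and follows essentially the same route as the paper's proof: apply $\omega$ to the $\shin^*$-expansion of $\shin^*_{\alpha/\beta}$ from Equation \eqref{prop_shin_skew_coeffs}, use duality-invariance together with the anti-automorphism property to rewrite $\langle \shin_{\beta}\shin_{\gamma}, \shin^*_{\alpha}\rangle$ as $\langle \bshin_{\gamma^r}\bshin_{\beta^r}, \bshin^*_{\alpha^r}\rangle$, and match the result against Equation \eqref{rskew_bshin_exp}. Your closing remark about the order reversal forcing the right-perp (skew-II) expansion, and the resulting structure-coefficient corollary, matches exactly what the paper records after its proof.
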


\begin{proof}
First, observe that $$\langle \omega(\shin_{\beta} \shin_{\gamma}), \omega(\shin^*_{\alpha}) \rangle = \langle \omega(\shin_{\gamma}) \omega(\shin_{\beta}), \omega(\shin^*_{\alpha}) \rangle = \langle \bshin_{\gamma^r} \bshin_{\beta^r}, \bshin_{\alpha^r} \rangle,$$ because $\omega$ is invariant under duality and an anti-isomorphism in \textit{NSym}. Then, by Equation \eqref{rskew_bshin_exp}
$$\omega(\shin^*_{\alpha/\beta}) = \sum_{\gamma} \langle \shin_{\beta}\shin_{\gamma}, \shin^*_{\alpha} \rangle 
 \omega(\shin^*_{\gamma}) = \sum_{\gamma} \langle \shin_{\beta}\shin_{\gamma}, \shin^*_{\alpha} \rangle 
 \bshin^*_{\gamma^r} = \sum_{\gamma} \langle \bshin_{\gamma^r} \bshin_{\beta^r}, \bshin_{\alpha^r} \rangle \bshin^*_{\gamma^r} = \bshin^*_{\alpha^r \rskew \beta^r}.\qedhere$$ 
\end{proof}

The first line of the proof above shows the following equality.

\begin{cor}
    For compositions $\alpha, \beta$ and $\gamma$, 
    $$\bshin_{\gamma}\bshin_{\beta} = \sum_{\alpha} \mathcal{C}^{\alpha^r}_{\beta^r, \gamma^r} \bshin_{\alpha}.$$
\end{cor}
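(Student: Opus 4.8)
The plan is to read off the structure constants of the product $\bshin_{\gamma}\bshin_{\beta}$ directly from the first line of the proof of Theorem \ref{omega_on_bshinskew}, exactly as the remark preceding the corollary suggests. Since the backward shin functions form a basis of \textit{NSym}, I would begin by writing $\bshin_{\gamma}\bshin_{\beta} = \sum_{\alpha} \langle \bshin_{\gamma}\bshin_{\beta}, \bshin^*_{\alpha}\rangle \bshin_{\alpha}$, so that the whole task reduces to identifying the pairing $\langle \bshin_{\gamma}\bshin_{\beta}, \bshin^*_{\alpha}\rangle$ as the desired coefficient $\mathcal{C}^{\alpha^r}_{\beta^r,\gamma^r}$.

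Next I would transport everything back to the shin side via $\omega$. By Proposition \ref{back_shin_omega} we have $\bshin_{\delta} = \omega(\shin_{\delta^r})$ for every composition $\delta$, and by Theorem \ref{back_eschur} we have $\bshin^*_{\delta} = \omega(\shin^*_{\delta^r})$. Because $\omega$ is an anti-automorphism of \textit{NSym}, the two factors reverse when the product is pulled through $\omega$:
$$\bshin_{\gamma}\bshin_{\beta} = \omega(\shin_{\gamma^r})\,\omega(\shin_{\beta^r}) = \omega(\shin_{\beta^r}\shin_{\gamma^r}).$$
Invoking the invariance of $\omega$ under the duality pairing established in the proof of Proposition \ref{back_shin_omega}, namely $\langle \omega(H), \omega(G)\rangle = \langle H, G\rangle$, I then compute
$$\langle \bshin_{\gamma}\bshin_{\beta}, \bshin^*_{\alpha}\rangle = \langle \omega(\shin_{\beta^r}\shin_{\gamma^r}), \omega(\shin^*_{\alpha^r})\rangle = \langle \shin_{\beta^r}\shin_{\gamma^r}, \shin^*_{\alpha^r}\rangle = \mathcal{C}^{\alpha^r}_{\beta^r,\gamma^r}.$$
Substituting this coefficient back into the basis expansion immediately gives $\bshin_{\gamma}\bshin_{\beta} = \sum_{\alpha} \mathcal{C}^{\alpha^r}_{\beta^r,\gamma^r}\bshin_{\alpha}$, which is the claim.

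The only subtlety — and indeed the entire content of the corollary — is bookkeeping the two index reversals that the anti-automorphism forces: the reversal of the two factors (so that $\bshin_{\gamma}\bshin_{\beta}$, with $\gamma$ listed first, produces the coefficient $\mathcal{C}^{\alpha^r}_{\beta^r,\gamma^r}$, with $\beta^r$ listed first), together with the reverse operation $\delta \mapsto \delta^r$ on each index arising from $\bshin_{\delta} = \omega(\shin_{\delta^r})$. I do not expect any genuine obstacle here, since the needed equality $\langle \bshin_{\gamma}\bshin_{\beta}, \bshin^*_{\alpha}\rangle = \mathcal{C}^{\alpha^r}_{\beta^r,\gamma^r}$ is precisely the first line of the proof of Theorem \ref{omega_on_bshinskew} after performing the substitution $\alpha \mapsto \alpha^r$, $\beta \mapsto \beta^r$, $\gamma \mapsto \gamma^r$; the work is entirely in applying the known properties of $\omega$ in the correct order.
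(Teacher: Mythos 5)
Your proposal is correct and is essentially the paper's own argument: the paper derives the corollary directly from the first line of the proof of Theorem \ref{omega_on_bshinskew}, which is exactly the computation $\langle \bshin_{\gamma^r}\bshin_{\beta^r}, \bshin^*_{\alpha^r}\rangle = \langle \shin_{\beta}\shin_{\gamma}, \shin^*_{\alpha}\rangle$ that you carry out (with the reversal substitution already performed). Your bookkeeping of the factor swap from the anti-automorphism and the index reversals from $\bshin_{\delta} = \omega(\shin_{\delta^r})$ matches the paper.
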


The \otherskew\ row-strict reverse extended Schur functions are again defined combinatorially via tableaux using \otherskew\ shapes.

\begin{defn}
    For compositions $\beta = (\beta_1, \ldots, b_{\ell}) \subseteq \alpha = (\alpha_1, \ldots, \alpha_k)$ such that $\beta^r \subseteq \alpha^r$, a \emph{row-strict reverse \otherskew\ shin-tableau} of skew shape $\alpha \rskew \beta$ is a \otherskew\ diagram $\alpha \rskew \beta$ filled with integers such that each row is strictly decreasing left to right, each column is weakly increasing top to bottom, and if $\alpha_{k-i}> \beta_{\ell-i}$ for some $i$ then there is no $j>i$ such that $\beta_{\ell-j}> \beta_{\ell - i}$. A \otherskew\  row-strict reverse shin-tableau is \emph{standard} if it contains the numbers $1$ through $|\alpha| - |\beta|$ each exactly once.

\end{defn}

\begin{ex}
   The three leftmost diagrams below are examples of  \otherskew\ row-strict reverse shin-tableaux while the rightmost diagram is not.

    $$ \text{\otherskew\ row-strict reverse shin:\quad}
    \scalebox{.85}{
    \begin{ytableau}
          3 & 2 \\
        *(gray) & 2 & 1\\
        *(gray) &  *(gray)  & 1
    \end{ytableau}
    \qquad 
    \begin{ytableau}
        2 & 1 \\
        *(gray)\\ 
        *(gray) & *(gray) & 1 
    \end{ytableau}
    \qquad
    \begin{ytableau}
        2 & 1\\ 
         *(gray) & *(gray) & 1\\
        *(gray)   
    \end{ytableau}}
     \qquad
     \qquad
     \text{Not:\quad }
     \scalebox{.85}{
     \begin{ytableau}
        2 & 1\\
         *(gray) & *(gray) & 2 \\
        *(gray) & 3 & 2 
     \end{ytableau}}
    $$
\end{ex}

 The expansion of skew-II row-strict reverse shin functions in terms of tableaux closely follows that of Proposition \ref{rskew_rshin_tab}, but by instead applying $\psi$ to the \otherskew\ reverse extended Schur functions. 

\begin{prop}
   For compositions $\alpha$ and $\beta$ where $\beta^r \subseteq \alpha^r$, $$\bshin^*_{\alpha \rskew \beta} = \sum_{T} x^T,$$ where the sum runs over \otherskew\ reverse shin-tableaux $T$ of shape $\alpha \rskew \beta$.
\end{prop}

We also use skew-II row-strict reverse tableaux to give a combinatorial formula for the product of a ribbon function and a row-strict reverse shin function.  The proof exactly follows that of Proposition \ref{fshin_R_mult} but by applying $\rho$ to Theorem \ref{rshin_big_thm} (2).

\begin{prop}
Left Ribbon Multiplication. $$R_{\beta}\bshin_{\alpha} = \sum_{\gamma \models |\alpha| + |\beta|} \sum_{S} \bshin_{\gamma},$$ where the sum runs over \otherskew\ standard row-strict reverse shin-tableaux $S$ of shape $\gamma/\alpha$ with $co_{\bshin}(U) = \beta$. 
\end{prop}

\printbibliography

\end{document}